\numberwithin{equation}{section}
\numberwithin{table}{section}
\numberwithin{figure}{section}
\newtheorem{theorem}{Theorem}[section]
\newtheorem{lemma}[theorem]{Lemma}
\newtheorem{proposition}[theorem]{Proposition}
\newtheorem{corollary}[theorem]{Corollary}
\newtheorem{remark}[theorem]{Remark}
\newtheorem{definition}[theorem]{Definition}
\newcommand{\Ai}{\mathrm{Ai}}
\newcommand{\Bi}{\mathrm{Bi}}
\newcommand{\C}{\mathbb{C}}
\newcommand{\E}{\mathbb{E}}
\renewcommand{\Pr}{\mathbb{P}}
\newcommand{\R}{\mathbb{R}}
\newcommand{\1}{\mathds{1}}
\newcommand{\dd}{\mathrm d}
\newcommand{\ii}{\sqrt{-1}}
\newcommand\JJ{{\begin{pmatrix} 0 & 1 \\ -1 & 0 \end{pmatrix}}}
\renewcommand{\d}[2]{{d_{#1,#2}}}
\newcommand{\q}[2]{{q_{#1,#2}}}
\DeclareMathOperator{\diag}{diag}
\DeclareMathOperator{\pf}{pf}
\DeclareMathOperator{\sgn}{sgn}
\DeclareMathOperator{\tr}{tr}
\newcommand{\GL}{\mathrm{GL}}
\newcommand{\Herm}{\mathrm{Herm}}
\newcommand{\Sym}{\mathrm{Sym}}
\newcommand{\Skew}{\mathrm{Skew}}
\begin{document}

\title[]{Expected Euler characteristic method for the largest eigenvalue: \\ (Skew-)orthogonal polynomial approach}
\author[]{Satoshi Kuriki}
\address{The Institute of Statistical Mathematics, 10-3 Midoricho, Tachikawa, Tokyo 190-8562, Japan}
\email{kuriki@ism.ac.jp}

\begin{abstract}
The expected Euler characteristic (EEC) method is an integral-geometric method used to approximate the tail probability of the maximum of a random field on a manifold.
Noting that the largest eigenvalue of a real-symmetric or Hermitian matrix is the maximum of the quadratic form of a unit vector, we provide EEC approximation formulas for the tail probability of the largest eigenvalue of orthogonally invariant random matrices of a large class.
For this purpose, we propose a version of a skew-orthogonal polynomial by adding a side condition such that it is uniquely defined,
and describe the EEC formulas in terms of the (skew-)orthogonal polynomials.
In addition, for the classical random matrices (Gaussian, Wishart, and multivariate beta matrices), we analyze the limiting behavior of the EEC approximation as the matrix size goes to infinity under the so-called edge-asymptotic normalization.
It is shown that the limit of the EEC formula approximates well the Tracy-Widom distributions in the upper tail area, as does the EEC formula when the matrix size is finite.
\end{abstract}

\keywords{Airy function, Painlev\'e II, Pfaffian, $p$-value, skew-orthogonal polynomial, tail probability, Tracy-Widom distribution.
}

\maketitle

\section{Introduction}

The set of $n\times n$ real symmetric or Hermitian matrices are denoted by
$\Sym(n)$ or $\Herm(n)$, respectively.
In this study, we deal with random matrices $A\in\Sym(n)$ or $\Herm(n)$ with probability density function with respect to $\dd A$ of the form
\begin{equation}
\label{pdf}
 p_n(A) \propto \prod_{i=1}^n w(\lambda_i),
\end{equation}
where $w(x)$ is a nonnegative function, $\lambda_1\le\cdots\le\lambda_n$ are the ordered eigenvalues of $A$, and
$\dd A$ is the Lebesgue measure of $\Sym(n)$ or $\Herm(n)$ identified with $\R^{n(n+1)/2}$ or $\R^{n^2}$.
It is assumed that $w(x)$ has a moment of arbitrary order.
The measure $\dd A$ is invariant under the orthogonal transformation
$\Sym(n)\ni A\mapsto P^\top A P$ ($P\in O(n)$) or $\Herm(n)\ni A\mapsto P^* A P$ ($P\in U(n)$),
where $P^\top$ and $P^*$ denote the transpose and the complex conjugate of $P$, respectively.
This implies that the distribution of $A$ is orthogonally invariant, and the eigenvalues and eigenvectors of $A$ are independently distributed.
The joint density of the eigenvalues with respect to $\prod_{i=1}^n \dd\lambda_i$ is
$\q{n}{\beta}(\lambda_1,\ldots,\lambda_n)\1_{\{\lambda_1<\cdots<\lambda_n\}}$,
where
\begin{equation}
\label{pdf-eigen}
 \q{n}{\beta}(\lambda_1,\ldots,\lambda_n) =
 \d{n}{\beta} \prod_{i=1}^n w(\lambda_i)\prod_{1\le i<j\le n} (\lambda_j-\lambda_i)^\beta
\end{equation}
with $\beta=1$ ($A\in\Sym(n)$) and $\beta=2$ ($A\in\Herm(n)$).
We do not need explicit forms of the normalizing constant $\d{n}{\beta}$.

The density function (\ref{pdf}) covers major orthogonally invariant random matrices: the standard symmetric Gaussian matrix (Gaussian orthogonal ensemble, GOE), the Wishart matrix, the multivariate beta matrix (MANOVA matrix), and their Hermitian counterparts (see (\ref{pdf-sym}) and (\ref{pdf-herm}) for the probability densities).
However, these are other random matrices with a probability density (\ref{pdf}).
For example, the positive definite conditional GOE matrix in Section \ref{subsec:pd_GOE} is included in this class.

In this paper, we discuss the approximation of the distribution of the largest eigenvalue $\lambda_{\max}(A)=\lambda_n(A)$ of $A$.
In statistics, the distribution of $\lambda_{\max}(A)$ appears as a null distribution of the test statistic in multivariate analysis.
For example, the largest eigenvalue of the Wishart matrix is used to test the independence in the correspondence analysis (\cite{haberman:1981},\cite{kateri:2014}), and the largest eigenvalue of the multivariate beta matrix is Roy's test statistic in multivariate analysis of variance (\cite{roy:1953},\cite{muirhead:1982}).
A complex Wishart matrix appears in the analysis of MIMO in wireless communication.
In this context, the upper tail probability corresponds to the $p$-value of the test statistic.
Because of this importance, numerical calculations or approximations of these distribution functions have been extensively studied in statistics.

One characterization of the largest eigenvalue is the maximum of a quadratic form:
\begin{equation}
\label{quadratic}
 \lambda_{\max}(A)=\max_{\Vert h\Vert=1} f(h), \quad \mbox{where}\ \ f(h)=h^\top A h \ \ \mbox{or}\ \ h^* A h.
\end{equation}
This is interpreted as the maximum of the random field $\{f(h)\}_{h\in M}$ on the unit sphere $M=\{h \mid \Vert h\Vert^2=1\}$.
To approximate the distribution of the maximum of such a random field on a manifold, an integral-geometric method, referred to as the expected Euler characteristic (EEC) method, was developed (\cite{adler:1981},\cite{worsley:1995},\cite{adler-taylor:2007}).
The set
\[
 M_x = \{h\in M \mid f(h)\ge x \} = f^{-1}([x,\infty))
\]
is referred to as the excursion set.
If the maximum of $f(h)$ is attained at a point with probability one,
when the threshold $x$ is large, we would expect that the set $M_x$ is a contractable set (homotopy equivalent to a point) unless it is empty.
This means that
\[
 \1(M_x\neq\emptyset) \approx \chi(M_x) \quad \mbox{when $x$ is large}.
\]
Then, by taking the expectations, we have
\begin{equation}
\label{eec}
 \Pr\biggl(\max_{h\in M}f(h)>x\biggr) \approx \E[\chi(M_x)] \quad
 \mbox{when $x$ is large}.
\end{equation}
This approximation is referred to as the expected Euler characteristic method.
\cite{takemura-kuriki:2002} pointed out that the volume-of-tube method having been developed for the same purpose is equivalent to the expected Euler characteristic method.
As we will see later in detail, in the particular case of (\ref{quadratic}),
the index set $M$ is redundant in the sense that
 $f(h)=f(-h)$ or $f(h)=f\bigl(e^{\ii\theta}h\bigr)$.
We need to redefine $M$ using the equivalence class $M:=M/\!\!\sim$ such that the maximum of $f(h)$ is attained at a point of the index set.

Based on this idea, \cite{kuriki-takemura:2001},\cite{kuriki-takemura:2008} demonstrated that the expected Euler characteristic method provides simple and accurate approximation formulas for the tail probabilities of the largest eigenvalues for real symmetric random matrices.
(See also \cite{takayama-etal:2020}, \cite{kuriki-takemura-taylor:2022}.)
The expected Euler characteristic method for the real Wishart matrix coincides with an approximation formula developed by \cite{hanumara-thompson:1968}, which is known in practice to be an accurate formula for the tail area.
The primary purpose of this study is to derive the expected Euler characteristics for random matrices with an arbitrary weight function $w(x)$.

For a real symmetric random matrix $A$, we show that the approximation formula includes the expected eigenpolynomial $\E[\det(x I_n -A)]$.
It is known that the skew-orthogonal polynomial is useful for evaluating the integral of the density function of real symmetric random matrices
(\cite{nagao-wadati:1991,adler-etal:2000,mehta:2004,ghosh:2009}).
The skew-orthogonal polynomial is not uniquely determined by the conventional definition.
In this paper, we propose an additional condition such that the skew-orthogonal polynomial with respect to arbitrary weight function $w(x)$ uniquely exists, and then provide a simple formula to express the expected eigenpolynomial using the proposed skew-orthogonal polynomial.
For the Hermitian matrix case,
the expectation of the squared eigenpolynomial $\E[\det(x I_n -A)^2]$ is needed,
which is known as a one-point correlation function in random matrix theory and is expressed through orthogonal polynomials.

For the major classical random matrices, the corresponding (skew-)orthogonal polynomials are known.
Hence, it is possible to express the expected Euler characteristics $\E[\chi(M_x)]$ in (\ref{eec}) explicitly.
This enables us to analyze the limiting behavior when the matrix size $n$ goes to infinity.
It is well-known that the limiting distribution of $\lambda_{\max}(A)$ when the matrix size $n$ goes to infinity is the Tracy-Widom distribution
 (\cite{tracy-widom:1994},\cite{tracy-widom:1996}).
We express the limiting formulas of the expected Euler characteristic using the Airy function, and show that it approximates accurately the upper tail probability of the Tracy-Widom distribution.
Although the Euler characteristic method provides an accurate approximation in upper tail area in general, this result is not obvious.
When the matrix size is infinite, the index set of the random field is inflated, and the regularity conditions of the expected Euler characteristic formula do not hold.

The remainder of this paper is constructed as follows.
In Section \ref{sec:ec}, we obtain the Euler characteristics of the quadratic random fields of real symmetric and Hermitian matrices.
In Section \ref{sec:ortho_poly}, we propose a version of a skew-orthogonal polynomial and provide the formulas for the expected Euler characteristics for a real symmetric matrix with an arbitrary weight function $w(x)$.
The corresponding results for Hermitian matrix are also given.
In Section \ref{sec:classical}, we obtain the expected Euler characteristic for several classical random matrices (including Gaussian, Wishart, multivariate beta, and conditional Gaussian matrices).
In Section \ref{sec:asymptotics}, we discuss the asymptotic behavior of the expected Euler characteristic when the sample size $n$ goes to infinity.
Some proofs are provided in Section \ref{sec:proofs}.
The basic integral formulas of the density functions, which are required in the proofs, are briefly summarized in Sections \ref{subsec:integral-herm} and \ref{subsec:integral-sym}.
In Appendix \ref{sec:painleve},
we analyze the exponential asymptotic structure
of the Painlev\'e II solution,
which is used in the analysis in Section \ref{sec:asymptotics}.

\section{Euler characteristics for quadratic-form random fields}
\label{sec:ec}

In this section, we formalize the expected Euler characteristics method for the random fields $f(h)$ defined in (\ref{quadratic}).
We summarize the results from \cite{kuriki-takemura:2008} first,
and extend them to the Hermitian case.
The proofs are summarized in Section \ref{sec:proofs}.

\subsection{Real symmetric case}
\label{subsec:real_symmetric}

Let $A$ be an $n\times n$ real symmetric random matrix with density $p_n$ in (\ref{pdf}).
Let $S(\R^{n\times 1})=\{h\in\R^{n\times 1} \mid h^\top h=1 \}$ be the set of unit column vectors in $\R^n$.
In addition, let
\[
 M = \{ h h^\top \mid h\in S(\R^{n\times 1}) \},
\]
the set of $n\times n$ real symmetric matrices of rank $1$.
$M$ is isomorphic to $S(\R^{n\times 1})/\!\sim$, where $h\sim -h$.
This quotient space is referred to as the real projective space $\mathbb{RP}^{n-1}$.
Then, the largest eigenvalue $\lambda_{\max}(A)$ of $A$ is the maximum of the random field
\[
 f(U) = \tr(U A), \ \ U\in M.
\]
The maximum of $f(U)$ is attained at a point in $M$ with probability one.
Let
\begin{equation}
\label{Mx}
 M_x = \{ U \in M \mid \tr(U A) \ge x\}
\end{equation}
be the excursion set, and let $\chi(M_x)$ be its Euler characteristic.
The Euler characteristic $\chi(M_x)$ is calculated using Morse's theorem:
\[
 \chi(M_x) = \sum_{\mbox{\footnotesize critical point}}\1(f(U^*)\ge x) \sgn\det(-\nabla\nabla f(U^*)),
\]
where $U^*$ is a critical point such that the gradient $\nabla f(U)$ vanishes,
and $\nabla\nabla f(U^*)$ is the Hesse matrix evaluated at the critical point.
This works if $f(U)$ is a Morse function.
\begin{lemma}[Real symmetric case \cite{kuriki-takemura:2008}]
\label{lem:chi-sym}
Suppose that $A$ is an $n\times n$ real symmetric random matrix with density $p_n$ in (\ref{pdf}).
With probability one,
\begin{equation}
\label{chiMx}
 \chi(M_x) = \sum_{k=1}^n (-1)^{n-k}\1(\lambda_k(A)\ge x).
\end{equation}
\end{lemma}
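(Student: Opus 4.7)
The plan is to apply the Morse-theoretic formula for $\chi(M_x)$ displayed just before the statement, so I need (i) to verify that $f(U)=\tr(UA)$ is a Morse function on $M\cong\mathbb{RP}^{n-1}$ almost surely, (ii) to enumerate its critical points, and (iii) to evaluate $\sgn\det(-\nabla\nabla f)$ at each one lying in $M_x$. Almost-sure simplicity of the spectrum $\lambda_1<\cdots<\lambda_n$ is immediate from (\ref{pdf-eigen}) with $\beta=1$: the Vandermonde factor puts zero mass on the coincidence set. Let $u_1,\ldots,u_n$ denote the associated orthonormal eigenvectors. Lifting $f$ to the sphere $S(\R^{n\times 1})$ via $h\mapsto hh^\top$ and applying Lagrange multipliers to $h^\top A h$ under $\|h\|=1$ yields the stationarity equation $Ah=\lambda h$, so the critical set on the sphere is $\{\pm u_k\}_{k=1}^n$; modulo the identification $h\sim -h$ these descend to exactly $n$ critical points $U_k=u_k u_k^\top$ on $M$, with critical values $f(U_k)=\lambda_k$.

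The main computation is the Hessian at $U_k$. In the eigenbasis of $A$ I would parametrize the sphere near $u_k$ by
\[
 h(c)=\Bigl(1-\sum_{j\neq k}c_j^2\Bigr)^{1/2}u_k+\sum_{j\neq k}c_j u_j,
\]
which satisfies $\|h\|=1$ identically. A direct computation using $Au_j=\lambda_j u_j$ and orthonormality gives
\[
 f(h(c))=h(c)^\top A h(c)=\lambda_k+\sum_{j\neq k}(\lambda_j-\lambda_k)c_j^2,
\]
so the Hessian of $f$ at $U_k$, viewed on the $(n-1)$-dimensional tangent space $T_{U_k}M$, is $\nabla\nabla f(U_k)=2\,\diag(\lambda_j-\lambda_k:j\neq k)$. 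Since the $\lambda_j$ are distinct almost surely, no eigenvalue vanishes, so $f$ is Morse and the formula applies. Passage from sphere to quotient is harmless because $S(\R^{n\times 1})\to M$ is a local diffeomorphism near each $\pm u_k$ and $f$ descends to $M$.

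To finish, I would tally signs: the number of $j\neq k$ with $\lambda_j<\lambda_k$ equals $k-1$, hence
\[
 \sgn\det(-\nabla\nabla f(U_k))=(-1)^{n-1}\sgn\prod_{j\neq k}(\lambda_j-\lambda_k)=(-1)^{n-1}(-1)^{k-1}=(-1)^{n-k},
\]
and substitution into the Morse formula gives (\ref{chiMx}). The main obstacle is the Hessian computation under the constraint $\|h\|=1$, which is why I would use the explicit sphere parametrization above rather than wrestle with Lagrange-multiplier bookkeeping, and it is also the only place where the almost-sure simplicity of the spectrum is actually needed.
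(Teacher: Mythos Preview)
Your proposal is correct and follows essentially the same Morse-theoretic route as the paper: identify critical points of $f$ on $M\cong\mathbb{RP}^{n-1}$ with eigenvectors, compute the Hessian, and read off the sign. The only cosmetic difference is that you parametrize the sphere directly in the eigenbasis of $A$, which yields the diagonal Hessian $2\,\diag(\lambda_j-\lambda_k)$ immediately, whereas the paper carries general local coordinates and arrives at $2T^\top(H^\top A H-(h^\top A h)I)T$ before specializing; both give the same sign count.
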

The details of the proof are summarized in Section \ref{subsec:morse}.
By letting $x\to -\infty$, we obtain a well-known formula $\chi(M)=\chi(\mathbb{RP}^{n-1})=(1-(-1)^n)/2$.

By taking the expectation for both sides of (\ref{chiMx}), we have the expected Euler characteristic
\begin{equation}
\label{EchiMx}
 \E\bigl[\chi(M_x)\bigr] = \sum_{k=1}^n (-1)^{n-k} \Pr(\lambda_k(A)\ge x).
\end{equation}
The expected Euler characteristic method uses it as an approximation formula to the upper tail probability of the largest eigenvalue:
\[
 \Pr(\lambda_{\max}(A)\ge x) \approx \E\bigl[\chi(M_x)\bigr] \quad\mbox{when $x$ is large}.
\]
Because $\Pr(\lambda_k(A)\ge x)$ increases in $k$,
$\Pr(\lambda_{\max}(A)\ge x) \ge \E\bigl[\chi(M_x)\bigr]$ holds
 (i.e., $\E\bigl[\chi(M_x)\bigr]$ is a liberal bound).

The expectation of the Euler characteristic is evaluated in the following lemma.
A proof is provided in Section \ref{subsec:morse}.
Although the Euler characteristic method provides a recipe for taking the expectation of $\chi(M_x)$ (which is the advantage of the expected Euler characteristic method), we prove it through direct calculations using the density function (\ref{pdf-eigen}) of the eigenvalues.
\begin{lemma}[Real symmetric case \cite{kuriki-takemura:2008}]
\label{lem:ec-sym}
\[
 \E[\chi(M_x)]
= \frac{\d{n}{1}}{\d{n-1}{1}} \int_x^\infty \E[\det(\lambda I_{n-1}-B)] w(\lambda) \dd\lambda,
\]
where $B$ is an $(n-1)\times (n-1)$ real symmetric random matrix with density $p_{n-1}$ in (\ref{pdf}),
and $\d{n}{1}$ is the normalizing constant in (\ref{pdf-eigen}) when $\beta=1$.
\end{lemma}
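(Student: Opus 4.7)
The plan is to reduce everything to a direct computation with the joint eigenvalue density $\q{n}{1}$ in (\ref{pdf-eigen}), rather than appeal to any further Morse-theoretic machinery. Lemma \ref{lem:chi-sym} already gives $\chi(M_x)=\sum_{k=1}^n(-1)^{n-k}\1(\lambda_k(A)\ge x)$, so by linearity of expectation
\[
 \E[\chi(M_x)] = \sum_{k=1}^n (-1)^{n-k}\Pr(\lambda_k(A)\ge x),
\]
and each $\Pr(\lambda_k(A)\ge x)$ is an integral of $\q{n}{1}$ over the ordered chamber $\{\lambda_1<\cdots<\lambda_n,\ \lambda_k\ge x\}$. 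The goal is to identify this alternating sum with the integral on the right-hand side.

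I would work backwards from the right-hand side. Writing the ordered eigenvalues of $B$ as $\mu_1<\cdots<\mu_{n-1}$ and using $\E[\det(\lambda I_{n-1}-B)]=\E\bigl[\prod_{i=1}^{n-1}(\lambda-\mu_i)\bigr]$, the right-hand side unfolds into
\[
 \d{n}{1}\int_x^\infty w(\lambda)\,\dd\lambda \int_{\mu_1<\cdots<\mu_{n-1}} \prod_{i=1}^{n-1}(\lambda-\mu_i)w(\mu_i)\prod_{1\le i<j\le n-1}(\mu_j-\mu_i)\,\dd\mu.
\]
I would then split the $(\lambda,\mu)$-domain by the unique index $k\in\{1,\ldots,n\}$ satisfying $\mu_{k-1}<\lambda<\mu_k$ (with the convention $\mu_0=-\infty$, $\mu_n=+\infty$) and relabel $(\mu_1,\ldots,\mu_{k-1},\lambda,\mu_k,\ldots,\mu_{n-1})$ as $(\lambda_1,\ldots,\lambda_n)$, which is strictly increasing with $\lambda_k=\lambda\ge x$.

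The pivotal algebraic identity under this relabelling is
\[
 \prod_{i=1}^{n-1}(\lambda-\mu_i)\prod_{1\le i<j\le n-1}(\mu_j-\mu_i) = (-1)^{n-k}\prod_{1\le i<j\le n}(\lambda_j-\lambda_i),
\]
the sign $(-1)^{n-k}$ arising from the $n-k$ negative factors $(\lambda-\mu_i)$ with $i\ge k$. Combined with $w(\lambda)\prod_i w(\mu_i)=\prod_i w(\lambda_i)$, the $k$-th piece of the decomposed right-hand side becomes $(-1)^{n-k}\Pr(\lambda_k(A)\ge x)$ (the constants $\d{n}{1}$ match on both sides). Summing in $k$ reproduces the alternating sum, which equals $\E[\chi(M_x)]$.

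The only real obstacle is the bookkeeping in the Vandermonde identity: one must verify that the decomposition of the full Vandermonde $\prod_{1\le i<j\le n}(\lambda_j-\lambda_i)$ into the factors involving $\lambda_k$ and the Vandermonde of the complement $\{\lambda_i\}_{i\ne k}$ exactly cancels the factors introduced by $\prod_{i}(\lambda-\mu_i)$, and that the sign $(-1)^{n-k}$ is produced by the factors of index exceeding $k$. Everything else is linearity of expectation and Fubini.
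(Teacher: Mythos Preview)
Your proposal is correct and follows essentially the same route as the paper: decompose the $(\lambda,\mu)$-domain according to the position of $\lambda$ among the ordered $\mu_i$, relabel, and use the Vandermonde identity to produce the sign $(-1)^{n-k}$. The only cosmetic difference is that the paper differentiates $\Pr(\lambda_k>x)$ in $x$ first, performs the same relabelling on the resulting density (using $\sum_k \1_{\{\lambda_{k-1}<x<\lambda_k\}}=1$), and then integrates back with the boundary condition $\E[\chi(M_x)]\to 0$ as $x\to\infty$; you keep the outer integral $\int_x^\infty$ throughout and match the pieces directly, which is marginally cleaner.
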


\subsection{Hermitian case}

Let $A$ be an $n\times n$ Hermitian random matrix with density $p_n$ in (\ref{pdf}).
Let
\[
 M = \{ h h^* \mid h\in S(\C^{n\times 1}) \},\quad S(\C^{n\times 1}) = \{ h\in\C^{n\times 1} \mid h^* h=1 \},
\]
be the set of $n\times n$ Hermitian matrices of complex-rank $1$.
The largest eigenvalue is the maximum of the random field
\[
 f(U) = \tr(U^* A), \ \ U\in M.
\]
Because $U=h h^*=(e^{i\theta}h) (e^{i\theta}h)^*$,
$M$ is isomorphic to $S(\C^{n\times 1})/\!\!\sim$, where $h\sim h'$ means $h'=e^{i\theta} h$ for some $\theta$.
This quotient space is referred to as the complex projective space $\mathbb{CP}^{n-1}$.

\begin{lemma}[Hermitian case]
\label{lem:chi-herm}
Suppose that $A$ is an $n\times n$ Hermitian random matrix with density $p_n$ in (\ref{pdf}).
With probability one,
\[
 \chi(M_x) = \sum_{k=1}^n \1(\lambda_k(A)\ge x).
\]
\end{lemma}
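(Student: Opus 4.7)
The plan is to mimic the proof of Lemma \ref{lem:chi-sym}, applying Morse's theorem to $f(U)=\tr(UA)$ on $M\cong\mathbb{CP}^{n-1}$. With probability one, $A$ has distinct eigenvalues $\lambda_1<\cdots<\lambda_n$ with orthonormal eigenvectors $h_1,\ldots,h_n\in\C^{n\times 1}$. The lift $\tilde f(h)=h^*Ah$ to $S(\C^{n\times 1})$ is stationary (by Lagrange multipliers) precisely at the eigenvectors, so on the quotient $\mathbb{CP}^{n-1}$ the critical points of $f$ are exactly the $n$ rank-one projectors $U_k := h_kh_k^*$, with critical values $f(U_k)=\lambda_k$.

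To compute the Hessian at $U_k$ I would use the affine chart
\[
h(z) = \frac{h_k + \sum_{j\ne k} z_j h_j}{\sqrt{1+\sum_{j\ne k}|z_j|^2}}, \qquad z=(z_j)_{j\ne k}\in\C^{n-1},
\]
which sends $z=0$ to $U_k$ and yields
\[
f\bigl(h(z)h(z)^*\bigr) = h(z)^* A\, h(z) = \lambda_k + \sum_{j\ne k}(\lambda_j-\lambda_k)|z_j|^2 + O(|z|^4).
\]
Writing $z_j = x_j + \ii y_j$, the real Hessian $\nabla\nabla f(U_k)$ is block-diagonal with $2\times 2$ blocks $2(\lambda_j-\lambda_k)I_2$, one for each $j\ne k$. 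Hence $f$ is a.s.\ a Morse function, and the Hessian has exactly $2(k-1)$ negative and $2(n-k)$ positive eigenvalues. Therefore $\sgn\det(-\nabla\nabla f(U_k)) = (-1)^{2(n-k)} = +1$ for every $k$, and the Morse-theoretic identity
\[
\chi(M_x) = \sum_{U^*}\1(f(U^*)\ge x)\,\sgn\det(-\nabla\nabla f(U^*))
\]
immediately yields the claim.

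The main, and essentially the only, subtlety is the verification that $f$ is a Morse function a.s.\ and that the quotient structure of $\mathbb{CP}^{n-1}$ is respected in the Hessian computation; the affine chart above handles both at once, since it supplies bona fide local coordinates on $\mathbb{CP}^{n-1}$ near $U_k$. Conceptually, the signs collapse to $+1$ because $\dim_\R\mathbb{CP}^{n-1}=2(n-1)$ is even and every pair $(x_j,y_j)$ of real coordinates contributes two Hessian eigenvalues of equal sign, wiping out the alternating $(-1)^{n-k}$ that appears in the real case.
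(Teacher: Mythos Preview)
Your proof is correct. It reaches the same conclusion as the paper's argument via Morse theory, but the implementation differs in a useful way.

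The paper chooses to \emph{realize} $\mathbb{CP}^{n-1}$ inside $\Sym(2n)$: it writes $A=B+\ii C$ as the block matrix $\begin{pmatrix}B&-C\\ C&B\end{pmatrix}$, identifies $M$ with the set of rank-$2$ orthogonal projections $\phi(h)=hh^\top+Jhh^\top J^\top$ for $h\in S(\R^{2n\times 1})$, builds real local coordinates $t_1,\dots,t_{2(n-1)}$ by hand, and then shows that at a critical point the Hessian is congruent to $\lambda_k I_{2(n-1)}-\diag(\Lambda_{(k)},\Lambda_{(k)})$, so that $\sgn\det(-\nabla\nabla f)=\sgn\prod_{i\ne k}(\lambda_k-\lambda_i)^2=+1$. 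This keeps the Hermitian and real-symmetric proofs in the same notational framework (everything is a real quadratic form on a real sphere modulo a group action), at the price of a fair amount of bookkeeping with $J$, $R(\theta)$, and the explicit coordinate map.

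Your route is more intrinsic: you stay on $\mathbb{CP}^{n-1}$, pick the standard affine chart centred at the eigenline $[h_k]$, and exploit that in the eigenbasis $f$ is already diagonal to second order, giving the Hessian blocks $2(\lambda_j-\lambda_k)I_2$ immediately. The pairing of real coordinates $(x_j,y_j)$ is exactly the mechanism that in the paper appears as the duplication $\diag(\Lambda_{(k)},\Lambda_{(k)})$. What you gain is brevity and transparency; what the paper's approach gains is a single real-variable calculus that handles both $\beta=1$ and $\beta=2$ uniformly.
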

By letting $x\to -\infty$, we obtain a well-known formula $\chi(M)=\chi(\mathbb{CP}^{n-1})=n$.

The expected Euler characteristic method uses
\[
 \E\bigl[\chi(M_x)\bigr] = \sum_{k=1}^n \Pr(\lambda_k(A)\ge x)
\]
to approximate $\Pr(\lambda_{\max}(A)\ge x)$.
Clearly, $\Pr(\lambda_{\max}(A)\ge x)\le\E\bigl[\chi(M_x)\bigr]$
(i.e., $\E\bigl[\chi(M_x)\bigr]$ is a conservative bound).

A proof of the following lemma is provided in Section \ref{subsec:morse}.

\begin{lemma}[Hermitian case]
\label{lem:ec-herm}
\[
 \E[\chi(M_x)] = \frac{\d{n}{2}}{\d{n-1}{2}}
 \int_x^\infty \E[\det(\lambda I_{n-1}-B)^2] w(\lambda) \dd\lambda,
\]
where $B$ is an $(n-1)\times (n-1)$ Hermitian random matrix with density $p_{n-1}$ in (\ref{pdf}),
and $\d{n}{2}$ is the normalizing constant in (\ref{pdf-eigen}) when $\beta=2$.
\end{lemma}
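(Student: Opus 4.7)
The plan is to compute $\E[\chi(M_x)]$ by a direct integration argument against the joint eigenvalue density $\q{n}{2}$, paralleling Lemma~\ref{lem:ec-sym} but now with $\beta=2$. Starting from Lemma~\ref{lem:chi-herm}, I would write
\[
\E[\chi(M_x)] = \sum_{k=1}^n \Pr(\lambda_k(A)\ge x) = \E\bigl[\#\{k : \lambda_k(A) \ge x\}\bigr],
\]
the expectation of a symmetric function of the eigenvalues.

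Because both the counting function $\sum_{k=1}^n \1(\lambda_k\ge x)$ and the density $\q{n}{2}$ are symmetric in their $n$ arguments, I would drop the ordering constraint $\lambda_1<\cdots<\lambda_n$ at the cost of $1/n!$ and then, by one more use of symmetry, collapse the sum to $n$ copies of the term $\1(\lambda_1 \ge x)$. Renaming $\lambda_1 = \lambda$, $(\lambda_2,\ldots,\lambda_n) = (\mu_1,\ldots,\mu_{n-1})$, and using the Vandermonde-squared factorization
\[
\prod_{1\le i<j\le n}(\lambda_j - \lambda_i)^2 = \prod_{i=1}^{n-1}(\lambda-\mu_i)^2 \cdot \prod_{1\le i<j\le n-1}(\mu_j - \mu_i)^2,
\]
this yields
\[
\E[\chi(M_x)] = \frac{\d{n}{2}}{(n-1)!} \int_x^\infty w(\lambda) \biggl[\int_{\R^{n-1}} \prod_{i=1}^{n-1} w(\mu_i) \prod_{1\le i<j\le n-1}(\mu_j-\mu_i)^2 \prod_{i=1}^{n-1}(\lambda-\mu_i)^2 \, \dd\mu_1 \cdots \dd\mu_{n-1}\biggr] \dd\lambda.
\]

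The bracketed inner integral is exactly $(n-1)!/\d{n-1}{2}$ times $\E[\det(\lambda I_{n-1} - B)^2]$: indeed, if $B\in\Herm(n-1)$ has density $p_{n-1}$, then $\det(\lambda I_{n-1} - B)^2 = \prod_i(\lambda-\mu_i)^2$ is a symmetric function of the eigenvalues of $B$, and symmetrizing $\q{n-1}{2}$ over the unordered region reintroduces the factor $(n-1)!/\d{n-1}{2}$. The factors $(n-1)!$ cancel and the normalizing constants combine into $\d{n}{2}/\d{n-1}{2}$, producing the claimed formula.

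The main obstacle is purely bookkeeping: tracking the combinatorial factor from symmetrization and matching the normalizing constants $\d{n}{2}, \d{n-1}{2}$; no further analytic input is required beyond the Vandermonde factorization and the identification $\prod_i(\lambda-\mu_i)^2 = \det(\lambda I_{n-1}-B)^2$. The structural differences from the real symmetric case (Lemma~\ref{lem:ec-sym}) are that here the \emph{squared} characteristic polynomial appears, since $\beta=2$, and no alternating signs arise, since $\sum_k \1(\lambda_k\ge x)$ is unsigned.
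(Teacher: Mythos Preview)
Your argument is correct. You compute $\E[\chi(M_x)]=\E\bigl[\sum_k \1(\lambda_k\ge x)\bigr]$ directly as a one-point marginal: symmetrize over $\R^n$, collapse the sum, and identify the inner integral with $\frac{(n-1)!}{\d{n-1}{2}}\E[\det(\lambda I_{n-1}-B)^2]$. The bookkeeping with $n!$, $(n-1)!$, and the normalizing constants is right.

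The paper takes a slightly different route: it differentiates each $\Pr(\lambda_k>x)$, rewrites the resulting density as an integral over the ordered $(n-1)$-tuple with the indicator $\1_{\{\lambda_{k-1}<x<\lambda_k\}}$, and then uses $\sum_{k=1}^n \1_{\{\lambda_{k-1}<x<\lambda_k\}}=1$ to collapse the sum before integrating back over $[x,\infty)$. Your approach is the more direct one for $\beta=2$, since the squared Vandermonde removes any sign and the expected count is already a symmetric one-point function. The paper's differentiate-then-partition argument buys uniformity with the real symmetric case: exactly the same manipulation works for $\beta=1$, where the single power of the Vandermonde produces the factor $(-1)^{n-k}$ that must be matched against the alternating signs in (\ref{EchiMx}), and the partition identity is what makes that cancellation transparent. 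For the Hermitian case alone, your route is shorter.
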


\section{Expected Euler characteristic in terms of (skew-)orthogonal polynomials}
\label{sec:ortho_poly}

In the previous section, we observed that the expected Euler characteristic $\E[\chi(M_x)]$ has the integral forms of $\E[\det(x I -A)]$ or $\E[\det(x I -A)^2]$.
In this section, we evaluate them using the (skew-)orthogonal polynomials.

\subsection{Orthogonal and skew-orthogonal polynomials}

We introduce orthogonal and skew-orthogonal polynomials on $\R$ associated with the weight function $w(x)$.
It was assumed that $w(x)$ has a moment of arbitrary order.

First, we quickly review the orthogonal polynomials.
The orthogonal polynomial associated with the weight function $w(x)$ is a monic polynomial system $\phi_i(x)$, $i\ge 0$, such that $\deg\phi_i(x)=i$,
\begin{equation}
\label{ortho}
 \int_{\R} \phi_i(x) \phi_j(x) w(x) \dd x = h_i \delta_{ij}, \quad h_i>0,
\end{equation}
where $\delta_{ij}$ is the Kronecker delta.
Let
\[
 M_n = (m_{ij})_{0\le i,j\le n-1},\qquad m_{ij} = \int_{\R} x^{i+j} w(x) \dd x,
\]
be the moment matrix.
Under the assumptions on $w(x)$, $M_n$ is positive definite.
Hence, using the Gram-Schmidt normalization, there exists a unique $n\times n$ upper triangle matrix $T_n$ with unit diagonal elements such that $T_n^\top M_n T_n =\diag(h_i)_{0\le i\le n-1}$, where $h_i>0$.
The orthogonal polynomial system is then constructed as
\[
 (\phi_0(x),\ldots,\phi_{n-1}(x)) = (1,x,\ldots,x^{n-1}) T_n.
\]
The sequence $\phi_i(x)$, $i\ge 0$, is determined independently of the choice of $n$.

\begin{lemma}
\label{lem:norm-herm}
The normalizing constant in (\ref{pdf-eigen}) when $\beta=2$ is
\[
 1/\d{n}{2} = \prod_{i=0}^{n-1} h_i.
\]
\end{lemma}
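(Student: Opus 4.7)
The plan is to compute the normalizing integral
\[
 1/\d{n}{2} = \int_{\lambda_1<\cdots<\lambda_n} \prod_{i=1}^n w(\lambda_i)\prod_{1\le i<j\le n}(\lambda_j-\lambda_i)^2 \dd\lambda_1\cdots\dd\lambda_n
\]
directly, using the orthogonal polynomials $\phi_0,\ldots,\phi_{n-1}$ associated to $w$.

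First I would symmetrize. Since the integrand is invariant under permutations of $(\lambda_1,\ldots,\lambda_n)$, the ordered integral equals $1/n!$ times the same integral taken over all of $\R^n$. Next I would rewrite the Vandermonde factor using the orthogonal polynomials: because $T_n$ is upper triangular with unit diagonal, elementary column operations give
\[
 \prod_{1\le i<j\le n}(\lambda_j-\lambda_i) = \det\bigl(\lambda_j^{i-1}\bigr)_{1\le i,j\le n} = \det\bigl(\phi_{i-1}(\lambda_j)\bigr)_{1\le i,j\le n},
\]
so the squared Vandermonde factors as the product of two such determinants.

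The key step is then the Andréief (or Heine/de Bruijn) identity: for any functions $\phi_i,\psi_j$,
\[
 \int_{\R^n} \det\bigl(\phi_{i-1}(\lambda_j)\bigr)\det\bigl(\psi_{i-1}(\lambda_j)\bigr)\prod_{k=1}^n w(\lambda_k)\dd\lambda_k = n!\,\det\Bigl(\int_{\R}\phi_{i-1}(x)\psi_{j-1}(x)w(x)\dd x\Bigr).
\]
Applied with $\psi_i=\phi_i$, the orthogonality relation (\ref{ortho}) makes the resulting $n\times n$ matrix diagonal with entries $h_0,\ldots,h_{n-1}$, yielding $n!\prod_{i=0}^{n-1}h_i$. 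Dividing by $n!$ from the symmetrization step gives $1/\d{n}{2}=\prod_{i=0}^{n-1}h_i$.

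There is no genuine obstacle: the computation is a standard application of Andréief's identity. The only point requiring a brief justification is the replacement of the monomials $\lambda_j^{i-1}$ in the Vandermonde by the monic polynomials $\phi_{i-1}(\lambda_j)$, which follows because $T_n$ is unit upper triangular and therefore $\det T_n=1$, leaving the determinant unchanged.
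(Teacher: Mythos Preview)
Your proposal is correct and is essentially the paper's own argument: the paper states the integral identity $\int_{\lambda_1<\cdots<\lambda_n}\prod g(\lambda_i)\prod(\lambda_j-\lambda_i)^2\,\dd\lambda=\det\bigl(\int\varphi_{i-1}\varphi_{j-1}g\bigr)$ as a proposition (attributed to Binet--Cauchy plus the Vandermonde determinant, i.e.\ exactly Andr\'eief), and then applies it with $\varphi_i=\phi_i$ so that the matrix is $\diag(h_0,\ldots,h_{n-1})$. Your symmetrization step and the $n!$ cancellation are just the unfolding of that proposition.
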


The skew-orthogonal polynomial is introduced in a similar manner.
\begin{definition}
\label{def:skew-ortho}
A monic polynomial system $\varphi_i(x)$, $i\ge 0$, is said to be a skew-orthogonal polynomial associated with the weight function $w(x)$ if $\deg\varphi_i(x)=i$,
\begin{equation}
\label{cond1}
 \biggl(\int_{x<y}-\int_{x>y}\biggr) \varphi_i(x) \varphi_j(y) w(x) w(y) \dd x \dd y =
 \begin{cases}
  \sigma_k & (i,j)=(2k,2k+1), \\
 -\sigma_k & (i,j)=(2k+1,2k), \\
 0         & (\mbox{otherwise}),
 \end{cases}
\end{equation}
$\sigma_k>0$, $k\ge 0$, and
\begin{equation}
\label{cond2}
 \int_{\R} \varphi_i(x) w(x) \dd x =
 \begin{cases}
  \gamma_i>0 & (i:\mbox{even}), \\
  0          & (i:\mbox{odd}).
\end{cases}
\end{equation}
Define $\gamma_i$ for an odd $i$ by
\begin{equation}
\label{gamma-odd}
 \gamma_{2k+1}=\sigma_k/\gamma_{2k}>0, \ \ k\ge 0.
\end{equation}
\end{definition}

In the conventional definition (e.g., \cite{nagao-wadati:1991}), the skew-orthogonal polynomial does not require the condition (\ref{cond2}) and is not uniquely determined (Remark \ref{rem:not_unique}).
We will show the existence and uniqueness of the skew-orthogonal polynomial under Definition \ref{def:skew-ortho}.

Let
\begin{equation}
\label{skew-moment}
 S_n = (s_{ij})_{0\le i,j\le n-1},\quad
 s_{ij}=\biggl(\int_{x<y}-\int_{x>y}\biggr) x^i y^j w(x) w(y) \dd x \dd y,
\end{equation}
be the skew-moment matrix, and let
\begin{equation}
\label{moment}
 (\mu_0,\ldots,\mu_{n-1}),\quad
 \mu_i = \int_{\R} x^i w(x) \dd x,
\end{equation}
be the moment vector.
\begin{lemma}
\label{lem:USU}
There exists a unique $n\times n$ upper triangle matrix $U_n$ with unit diagonal elements such that
\begin{equation}
\label{cond1'}
 U_n^\top S_n U_n =
\begin{cases}
\displaystyle
 \diag\bigl(\sigma_0 J,\ldots,\sigma_{n/2-1} J\bigr) & (n:\mbox{even}), \\
\displaystyle
 \diag\bigl(\sigma_0 J,\ldots,\sigma_{(n-1)/2-1} J,0\bigr) & (n:\mbox{odd}),
\end{cases}
 \quad J = \begin{pmatrix} 0 & 1 \\ -1 & 0 \end{pmatrix}, \quad \sigma_k>0,
\end{equation}
and
\begin{equation}
\label{cond2'}
  (\mu_0,\ldots,\mu_{n-1}) U_n =
\begin{cases}
 (\gamma_0,0,\gamma_2,\ldots,\gamma_{n-2},0) & (n:\mbox{even}), \\
 (\gamma_0,0,\gamma_2,\ldots,0,\gamma_{n-1}) & (n:\mbox{odd}),
\end{cases}
 \quad \gamma_{2k}>0.
\end{equation}
\end{lemma}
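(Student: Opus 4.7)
The plan is to identify the $j$-th column of $U_n$ with the monic polynomial $\varphi_j(x) = \sum_{i=0}^{j} u_{ij} x^i$ and to write $\langle f,g\rangle_S := \bigl(\int_{x<y}-\int_{x>y}\bigr) f(x) g(y) w(x) w(y)\,\dd x\,\dd y$ for the skew bilinear form on polynomials (so $\langle x^i,x^j\rangle_S = s_{ij}$). Then conditions (\ref{cond1'}) and (\ref{cond2'}) translate exactly to the skew-orthogonality pattern (\ref{cond1}) and the moment pattern (\ref{cond2}) for the polynomial system $\{\varphi_j\}$. I would then build the pairs $(\varphi_{2k},\varphi_{2k+1})$ inductively in $k$; the base case $\varphi_0 = 1$ is forced by unit diagonal, and $\gamma_0 = \mu_0 > 0$.

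Inductive step: assume $\varphi_0,\ldots,\varphi_{2k-1}$ have been uniquely constructed with $\sigma_0,\ldots,\sigma_{k-1} > 0$ and $\gamma_0,\gamma_2,\ldots,\gamma_{2k-2} > 0$. To build $\varphi_{2k}$, impose the $2k$ linear equations $\langle \varphi_{2k},\varphi_j\rangle_S = 0$ for $j = 0,\ldots,2k-1$ on its $2k$ free coefficients; unique solvability is equivalent to nondegeneracy of $\langle\cdot,\cdot\rangle_S$ on $\spn\{\varphi_0,\ldots,\varphi_{2k-1}\}$, whose Gram matrix in the basis $\{\varphi_j\}_{j<2k}$ is $\diag(\sigma_0 J,\ldots,\sigma_{k-1}J)$ by the inductive hypothesis --- nonsingular since each $\sigma_j > 0$. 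To build $\varphi_{2k+1}$, a dimension count shows that the skew-orthogonal complement of $\spn\{\varphi_0,\ldots,\varphi_{2k-1}\}$ inside the $(2k+1)$-dimensional space of polynomials of degree $\le 2k$ is one-dimensional, spanned by $\varphi_{2k}$; thus the monic degree-$(2k+1)$ solutions of $\langle\varphi_{2k+1},\varphi_j\rangle_S=0$ ($j<2k$) form a one-parameter family $\psi_{2k+1}+t\varphi_{2k}$. The remaining constraint $\int\varphi_{2k+1}(x)w(x)\,\dd x=0$ is linear in $t$ with coefficient $\gamma_{2k}>0$, uniquely determining $t$. For odd $n$ the final column $\varphi_{n-1}$ is obtained analogously from the $n-1$ skew-orthogonality conditions, whose Gram block has Pfaffian $\sigma_0\sigma_1\cdots\sigma_{(n-3)/2}>0$.

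The central nontrivial input is the positivity of $\sigma_k$ and $\gamma_{2k}$ (and $\gamma_{n-1}$ when $n$ is odd). By de Bruijn's Pfaffian integral identity (a special case of the formulas collected in Section \ref{subsec:integral-sym}),
\[
 \pf(S_{2k}) = \frac{1}{(2k)!}\int_{\R^{2k}}\prod_{1\le i<j\le 2k}|\lambda_j-\lambda_i|\prod_{i=1}^{2k}w(\lambda_i)\,\dd\lambda_i > 0.
\]
Combined with the congruence $U_{2k}^\top S_{2k} U_{2k}=\diag(\sigma_0 J,\ldots,\sigma_{k-1}J)$ and $\det U_{2k}=1$, this yields $\pf(S_{2k})=\sigma_0\cdots\sigma_{k-1}$, hence $\sigma_k=\pf(S_{2k+2})/\pf(S_{2k})>0$. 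A parallel de Bruijn identity with one extra integration variable expresses $\gamma_{2k}\,\pf(S_{2k})$ as a manifestly positive $(2k+1)$-fold integral, giving $\gamma_{2k}>0$ (and similarly for $\gamma_{n-1}$).

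The main obstacle is thus not the linear-algebra bookkeeping but the positivity of these Pfaffians and moments: the existence/uniqueness induction must run in tandem with the positivity claims, and those claims rely on the de Bruijn-type integral representations of the $\beta=1$ normalizing constants summarized in Section \ref{subsec:integral-sym}.
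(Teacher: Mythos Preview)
Your proposal is correct and structurally close to the paper's proof: both reduce existence to the positivity of the leading Pfaffian minors $\pf(S_{2k})>0$ via de~Bruijn's identity, and both obtain uniqueness by observing that the solutions of (\ref{cond1'}) alone form a one-parameter family per $2\times 2$ block, which the moment condition (\ref{cond2'}) then pins down. The genuine difference lies in how $\gamma_{2k}>0$ is established. You invoke the odd-$n$ de~Bruijn identity (Proposition~\ref{prop:pf-int} with $n=2k+1$) directly: computing the bordered Pfaffian $\pf\bigl(\begin{smallmatrix}S_{2k+1}&\bm{\mu}\\-\bm{\mu}^\top&0\end{smallmatrix}\bigr)$ in the skew-orthogonal basis already constructed yields $\gamma_{2k}\,\pf(S_{2k})$, which equals the manifestly positive $(2k{+}1)$-fold integral. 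The paper instead makes a detour: it first identifies the even-degree skew-orthogonal polynomial with the expected characteristic polynomial, $\widetilde\varphi_{2k}(x)=\E[\det(xI_{2k}-A_{2k})]$ (via a Pfaffian computation that anticipates Lemma~\ref{lem:Edet}), and then appeals to Lemma~\ref{lem:even-odd} to get $\int\widetilde\varphi_{2k}(x)w(x)\,\dd x=\d{2k}{1}/\d{2k+1}{1}>0$. Your route is more self-contained for the purposes of Lemma~\ref{lem:USU}; the paper's route has the advantage that it establishes the even case of Lemma~\ref{lem:Edet} as a byproduct, so that no additional work is needed there later.
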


The proof of Lemma \ref{lem:USU} is provided in Section \ref{sec:proofs}.
Once the matrix $U_n$ satisfying (\ref{cond1'}) and (\ref{cond2'}) is obtained, we immediately obtain the skew-orthogonal polynomial.

\begin{proposition}
The skew-orthogonal polynomial system $\varphi_i(x)$, $i\ge 0$, satisfying $\deg\varphi_i(x)=i$, (\ref{cond1}) and (\ref{cond2}) uniquely exists.
\end{proposition}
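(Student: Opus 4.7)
The plan is to read off the skew-orthogonal polynomials directly from the matrix $U_n$ supplied by Lemma \ref{lem:USU}, in direct analogy with the construction $(\phi_0,\ldots,\phi_{n-1})=(1,x,\ldots,x^{n-1})T_n$ used for the orthogonal polynomials. Concretely, for each $n$ I will set
\[
 (\varphi_0(x),\ldots,\varphi_{n-1}(x)) := (1,x,\ldots,x^{n-1}) \, U_n.
\]
Because $U_n$ is upper triangular with unit diagonal, each $\varphi_i$ is automatically monic of degree exactly $i$.

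Next I would translate the matrix identities (\ref{cond1'}) and (\ref{cond2'}) into the integral identities (\ref{cond1}) and (\ref{cond2}). For (\ref{cond1}), observe that the $n\times n$ matrix whose $(i,j)$-entry is $\bigl(\int_{x<y}-\int_{x>y}\bigr)\varphi_i(x)\varphi_j(y) w(x) w(y) \dd x \, \dd y$ equals $U_n^\top S_n U_n$, where $S_n$ is the skew-moment matrix in (\ref{skew-moment}); reading off its entries via (\ref{cond1'}) yields (\ref{cond1}), together with $\sigma_k>0$. For (\ref{cond2}), the row vector of integrals $\int_\R \varphi_i(x) w(x) \dd x$ equals $(\mu_0,\ldots,\mu_{n-1})U_n$, and (\ref{cond2'}) supplies exactly the required structure with $\gamma_{2k}>0$. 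The definition (\ref{gamma-odd}) is a purely notational choice and imposes no further constraint.

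For uniqueness, I would suppose that $\tilde\varphi_i$ is any system meeting the three defining conditions. Monicness and the degree requirement force
\[
 (\tilde\varphi_0,\ldots,\tilde\varphi_{n-1}) = (1,x,\ldots,x^{n-1}) \, \tilde U_n
\]
for a unique upper triangular $\tilde U_n$ with unit diagonal; the same translations as above then turn (\ref{cond1}) and (\ref{cond2}) back into (\ref{cond1'}) and (\ref{cond2'}) for $\tilde U_n$, and the uniqueness part of Lemma \ref{lem:USU} forces $\tilde U_n = U_n$, hence $\tilde\varphi_i = \varphi_i$.

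The only step that requires genuine care will be verifying that the polynomials $\varphi_i$ do not depend on the truncation parameter $n$, so that they form a single well-defined sequence. Here I will exploit the fact that $S_n$ and $(\mu_0,\ldots,\mu_{n-1})$ are the principal submatrix and initial segment of $S_{n+1}$ and $(\mu_0,\ldots,\mu_n)$, and that the first $n$ columns of the upper triangular matrix $U_{n+1}$ vanish below row $n$; their restriction to the first $n$ rows is therefore an $n\times n$ upper triangular matrix with unit diagonal that satisfies (\ref{cond1'}) and (\ref{cond2'}) at size $n$, and the uniqueness in Lemma \ref{lem:USU} forces this restriction to coincide with $U_n$. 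Beyond this consistency check there is no real obstacle, as Lemma \ref{lem:USU} has already done the heavy lifting; the only mild bookkeeping concerns keeping track of the two parity cases in (\ref{cond1'})--(\ref{cond2'}), but both are handled uniformly by the same translation.
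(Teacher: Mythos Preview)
Your proposal is correct and follows the same approach as the paper: define $\varphi_i$ via $(\varphi_0,\ldots,\varphi_{n-1})=(1,x,\ldots,x^{n-1})U_n$ using the matrix from Lemma~\ref{lem:USU}, and note consistency in $n$. The paper's proof is in fact much terser---it records only the defining formula and the remark that the $\varphi_i$ are independent of $n$---so your version supplies the details (the translation between (\ref{cond1'})--(\ref{cond2'}) and (\ref{cond1})--(\ref{cond2}), the uniqueness argument, and the consistency check) that the paper leaves implicit.
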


\begin{proof}
Let
\begin{equation}
\label{varphi}
 (\varphi_0(x),\ldots,\varphi_{n-1}(x)) = (1,x,x^2,\ldots,x^{n-1}) U_n.
\end{equation}
Here, $\varphi_i(x)$'s are defined independently of the choice of $n$.
\end{proof}

\begin{lemma}
\label{lem:norm-sym}
The normalizing constant in (\ref{pdf-eigen}) when $\beta=1$ is
\[
 1/\d{n}{1} =
 \begin{cases}
 \displaystyle
 \prod_{k=0}^{n/2-1}\sigma_k  & (n:\mbox{even}), \\
 \displaystyle
 \prod_{k=0}^{(n-1)/2-1}\sigma_k \times \gamma_{n-1} & (n:\mbox{odd}).
 \end{cases}
\]
\end{lemma}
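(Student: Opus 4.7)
The plan is to evaluate $1/\d{n}{1}$ as a Pfaffian via de Bruijn's integration identity, and then read off that Pfaffian using the orthogonality relation (\ref{cond1}) and the side condition (\ref{cond2}) of the proposed skew-orthogonal polynomials. Starting from the normalization
\begin{equation*}
 1/\d{n}{1} = \int_{\lambda_1 < \cdots < \lambda_n} \prod_{i=1}^n w(\lambda_i) \prod_{1 \le i < j \le n}(\lambda_j - \lambda_i) \, d\lambda_1 \cdots d\lambda_n,
\end{equation*}
I would rewrite the Vandermonde factor as $\det(\lambda_i^{j-1})_{i,j=1}^n$, absorb the $\prod w(\lambda_i)$ into the rows, and then replace the monomial basis by the monic skew-orthogonal polynomials $\varphi_0,\ldots,\varphi_{n-1}$ via unimodular column operations. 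The integrand becomes $\det(w(\lambda_i)\,\varphi_{j-1}(\lambda_i))_{i,j=1}^n$.

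For $n$ even, de Bruijn's Pfaffian integration formula converts the ordered integral into $\pf(M)$, where
\begin{equation*}
 M_{ij} = \biggl(\int_{x<y} - \int_{x>y}\biggr) \varphi_{i-1}(x) \varphi_{j-1}(y) w(x) w(y) \, dx \, dy.
\end{equation*}
By (\ref{cond1}), $M$ is exactly the block-diagonal matrix $\diag(\sigma_0 J, \ldots, \sigma_{n/2-1} J)$, whose Pfaffian is $\prod_{k=0}^{n/2-1}\sigma_k$. For $n$ odd, the corresponding de Bruijn identity produces instead a bordered $(n+1)\times(n+1)$ Pfaffian
\begin{equation*}
 \pf\begin{pmatrix} M & c \\ -c^\top & 0 \end{pmatrix}, \qquad c_i = \int_{\R} \varphi_{i-1}(x) w(x) \, dx = \gamma_{i-1}.
\end{equation*}
Now (\ref{cond1}) forces $M$ to have the structure $\diag(\sigma_0 J, \ldots, \sigma_{(n-3)/2} J, 0)$ (the final diagonal entry is zero because $M$ is odd-dimensional), and (\ref{cond2}) forces $c$ to be supported only on the odd-numbered rows $1,3,\ldots,n$, with values $\gamma_0,\gamma_2,\ldots,\gamma_{n-1}$.

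To finish the odd case, I would perform the symmetric row/column operations ``add $-\gamma_{2k}/\sigma_k$ times row (and column) $2k+2$ to row (and column) $n+1$'' for $k=0,1,\ldots,(n-3)/2$; these are congruences by unit-determinant elementary matrices and so preserve the Pfaffian, while successively zeroing out every coupling of the border to rows $1,3,\ldots,n-2$ and leaving an isolated $2\times 2$ block $\gamma_{n-1} J$ coupling rows $n$ and $n+1$. The resulting block-diagonal matrix $\diag(\sigma_0 J, \ldots, \sigma_{(n-3)/2} J, \gamma_{n-1} J)$ has Pfaffian $\prod_{k=0}^{(n-1)/2-1}\sigma_k \cdot \gamma_{n-1}$, which is the claimed value. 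The main obstacle is this odd case: one must invoke the bordered form of de Bruijn's identity and carry out the elimination whose success depends on precisely the side condition (\ref{cond2}) — the feature absent from the conventional definition of skew-orthogonal polynomials — in order to extract the factor $\gamma_{n-1}$.
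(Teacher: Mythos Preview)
Your proposal is correct and follows essentially the same route as the paper: both apply the de Bruijn/Pfaffian integration identity (Proposition~\ref{prop:pf-int}) in the skew-orthogonal basis so that the skew-moment matrix becomes block-diagonal $\diag(\sigma_k J)$, and in the odd case both handle the bordered Pfaffian with border vector $(\gamma_0,0,\gamma_2,0,\ldots,\gamma_{n-1})$ coming from (\ref{cond2}). The only cosmetic difference is that the paper simply displays the bordered matrix and states its Pfaffian, whereas you carry out the explicit unit-determinant row/column elimination that kills the $\gamma_0,\gamma_2,\ldots,\gamma_{n-3}$ couplings and isolates the final $\gamma_{n-1}J$ block; this is the same computation written out rather than asserted.
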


\subsection{Expectation of the eigenpolynomial $\E[\det(x I_n-A)]$: Real symmetric case}

Now, we have the one of the main results.

\begin{lemma}
\label{lem:Edet}
Let $A_n$ be an $n\times n$ real symmetric random matrix with density $p_n$ in (\ref{pdf}).
Let $\varphi_i(x)$, $i\ge 0$, be a monic skew-orthogonal polynomial system associated with the weight function $w(x)$ satisfying (\ref{cond1}) and (\ref{cond2}).
Then,
\begin{equation*}
 \E[\det(z I_n-A_n)] = \widehat\varphi_n(x),
\end{equation*}
where
\begin{equation}
\label{hatvarphi}
 \widehat\varphi_n(x) =
\begin{cases}
 \varphi_n(x) & (n:\mbox{even}), \\
\displaystyle
 \varphi_n(x) + \frac{\gamma_n}{\gamma_{n-2}}\varphi_{n-2}(x)+\cdots+\frac{\gamma_n}{\gamma_{3}}\varphi_{3}(x)+\frac{\gamma_n}{\gamma_1}\varphi_1(x) & (n:\mbox{odd}),
\end{cases}
\end{equation}
in which $\gamma_i$ is defined in (\ref{gamma-odd}) in Definition \ref{def:skew-ortho}.

Conversely, the skew-orthogonal polynomial is expressed as
\[
 \varphi_n(x) = \begin{cases}
 \widehat\varphi_n(x) & (n:\mbox{even}), \\
\displaystyle
 \widehat\varphi_n(x) - \frac{\gamma_n}{\gamma_{n-2}} \widehat\varphi_{n-2}(x) & (n:\mbox{odd}).
\end{cases}
\]
\end{lemma}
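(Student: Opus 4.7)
The plan is to reduce $\E[\det(zI_n-A_n)]$ to a determinantal integral via (\ref{pdf-eigen}) with $\beta=1$ and then invoke de Bruijn's Pfaffian identity. The starting observation is the Vandermonde identity
\[
\prod_{i=1}^n(z-\lambda_i)\prod_{1\le i<j\le n}(\lambda_j-\lambda_i) = \prod_{1\le i<j\le n+1}(\lambda_j-\lambda_i)\Big|_{\lambda_{n+1}=z} = \det\bigl(\varphi_{i-1}(\lambda_j)\bigr)_{i,j=1}^{n+1}\Big|_{\lambda_{n+1}=z},
\]
which holds for any monic polynomial system by elementary column operations on the Vandermonde, in particular for the skew-orthogonal polynomials $\varphi_i$ of (\ref{varphi}). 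Expanding this $(n+1)\times(n+1)$ determinant along the last column yields
\[
\E[\det(zI_n-A_n)] = \d{n}{1}\sum_{k=1}^{n+1}(-1)^{k+n+1}\varphi_{k-1}(z)\,J_k,
\]
where $J_k$ is the integral over the ordered chamber $\lambda_1<\cdots<\lambda_n$ of the cofactor $\det(\varphi_{i-1}(\lambda_j))_{i\ne k,\,j=1,\ldots,n}$ weighted by $\prod_i w(\lambda_i)$.

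De Bruijn's identity evaluates each $J_k$ as the Pfaffian of the skew-inner-product matrix of the polynomial set $\{\varphi_i\}_{i\ne k-1}$, augmented by the moment vector of (\ref{moment}) when $n$ is odd. For $n$ even, the skew-orthogonality (\ref{cond1}) makes the $(n+1)\times(n+1)$ matrix of skew-inner-products of $\varphi_0,\ldots,\varphi_n$ block-diagonal with $2\times 2$ blocks $\sigma_j J$ together with a trailing zero row/column corresponding to the unpaired $\varphi_n$. Deleting any row/column other than the last introduces a second zero row/column, forcing $J_k=0$; deleting the last leaves the block-diagonal matrix with Pfaffian $\prod_{j=0}^{n/2-1}\sigma_j=1/\d{n}{1}$ by Lemma \ref{lem:norm-sym}. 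Thus only $k=n+1$ survives, and the formula reduces to $\varphi_n(z)=\widehat\varphi_n(z)$.

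For $n$ odd, condition (\ref{cond2}) makes the augmented moment vector vanish at odd indices. A parity analysis then shows $J_k=0$ when $k-1$ is even (the polynomial left unpaired in $\{\varphi_i\}_{i\ne k-1}$ is odd-indexed, so its augmented entry vanishes), while for $k-1=2l+1$ the only nonzero entry in the row of the unpaired $\varphi_{2l}$ is $\gamma_{2l}$ in the augmented column, and expanding the Pfaffian along that row reduces it to $\pm\gamma_{2l}\prod_{j\ne l}\sigma_j$. Substituting $\sigma_l=\gamma_{2l}\gamma_{2l+1}$ from (\ref{gamma-odd}) and invoking Lemma \ref{lem:norm-sym} (which gives $\prod_{j=0}^{(n-1)/2}\sigma_j=\gamma_n/\d{n}{1}$ upon using $\sigma_{(n-1)/2}=\gamma_{n-1}\gamma_n$), each surviving term contributes $\pm(\gamma_n/\gamma_{2l+1})\varphi_{2l+1}(z)$; summing over $l=0,\ldots,(n-1)/2$ reproduces $\widehat\varphi_n(z)$. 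The converse identity follows by telescoping: every $\varphi_{2l+1}$ with $l\le(n-3)/2$ appears in $\widehat\varphi_n$ with coefficient $\gamma_n/\gamma_{2l+1}$ and in $(\gamma_n/\gamma_{n-2})\widehat\varphi_{n-2}$ with the same coefficient, leaving only $\varphi_n(z)$ after subtraction. The main obstacle is the sign bookkeeping in the odd case---the cofactor sign $(-1)^{k+n+1}$ must align with the Pfaffian Laplace expansion sign so that every surviving coefficient comes out positive---which requires a careful pass through the standard Pfaffian expansion identity, confirmed by small-$n$ examples such as $n=3$.
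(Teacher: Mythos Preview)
Your cofactor-expansion argument is correct and is genuinely different from the paper's proof. The paper does not expand along the last column. Instead it keeps the full Pfaffian intact and uses the algebraic identity $(z-x)\Phi_n(x)=(zI_n-L_n)\Phi_n(x)-\bm{e}_n\varphi_n(x)$ together with Lemma~\ref{lem:monic}, $\varphi_n(z)=\det(zI_n-L_n)$, to pull out a factor $\det(zI_n-L_n)$ from the Pfaffian in one stroke; for odd $n$ it first passes to the system $\widehat\varphi_i$ via the matrix $B_n$, verifies the closed forms (\ref{intint}) and (\ref{int}) of its skew-inner products and moments, and then applies the same factorisation with $\widehat L_n$. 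Your approach trades this machinery for a direct parity analysis of the $n$ individual cofactor Pfaffians: the block structure and the vanishing of $\int\varphi_{2l+1}w$ kill all but the odd-index terms, and $\sigma_l=\gamma_{2l}\gamma_{2l+1}$ together with Lemma~\ref{lem:norm-sym} produces the coefficients $\gamma_n/\gamma_{2l+1}$ one by one. This is more elementary and makes the origin of each summand in $\widehat\varphi_n$ transparent, at the cost of the sign bookkeeping you note (the Pfaffian Laplace expansion sign versus the cofactor sign $(-1)^{k+n+1}$); the paper's factorisation trick avoids that bookkeeping entirely but requires the auxiliary Lemma~\ref{lem:monic} and the intermediate computation of the skew-inner products of the $\widehat\varphi_i$.
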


\begin{remark}
\label{rem:not_unique}
Let $\{u_n\}$ be an arbitrary sequence.
The system
\[
 \varphi_n(x) := \begin{cases}
 \varphi_n(x) & (n:\mbox{even}), \\
 \varphi_n(x)-u_n \varphi_{n-1}(x) & (n:\mbox{odd})
 \end{cases}
\]
satisfies (\ref{cond1}), the conventional definition of the skew-orthogonal polynomial, but does not satisfy (\ref{hatvarphi}) unless $u_n\equiv 0$.
\end{remark}

By combining Lemmas \ref{lem:ec-sym}, \ref{lem:norm-sym}, and \ref{lem:Edet},
we have the expected Euler characteristic formula for real symmetric random matrices.

\begin{theorem}
\label{thm:eec-sym}
Let $A$ be an $n\times n$ real symmetric random matrix with density $p_n$ in (\ref{pdf}).
Let $M_x$ be the excursion set in (\ref{Mx}).
The expected Euler characteristic of $M_x$ is
\begin{equation}
\label{eec-sym}
 \E[\chi(M_x)] = \frac{1}{\gamma_{n-1}}
 \int_x^\infty \widehat\varphi_{n-1}(\lambda) w(\lambda) \dd\lambda,
\end{equation}
where $\widehat\varphi_{n-1}(x)$ is defined in (\ref{hatvarphi}), and $\gamma_{n-1}$ is defined in (\ref{cond2}) and (\ref{gamma-odd}).
\end{theorem}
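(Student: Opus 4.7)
The plan is to just glue together the three results already in hand. Lemma \ref{lem:ec-sym} expresses
\[
 \E[\chi(M_x)] = \frac{\d{n}{1}}{\d{n-1}{1}} \int_x^\infty \E[\det(\lambda I_{n-1}-B)]\, w(\lambda) \dd\lambda,
\]
where $B\in\Sym(n-1)$ has density $p_{n-1}$. Applying Lemma \ref{lem:Edet} to the $(n-1)\times(n-1)$ matrix $B$ lets me replace the expected eigenpolynomial by $\widehat\varphi_{n-1}(\lambda)$. After that step, the whole content of the theorem reduces to a single identity: the prefactor $\d{n}{1}/\d{n-1}{1}$ equals $1/\gamma_{n-1}$.

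To verify this identity I would plug in Lemma \ref{lem:norm-sym} and use the relation $\sigma_k = \gamma_{2k}\gamma_{2k+1}$, which is just (\ref{gamma-odd}) rewritten. A short parity split then does it. If $n$ is even, then $n-1$ is odd, so
\[
 \frac{1}{\d{n}{1}} = \prod_{k=0}^{n/2-1}\sigma_k, \qquad \frac{1}{\d{n-1}{1}} = \prod_{k=0}^{n/2-2}\sigma_k \cdot \gamma_{n-2},
\]
and therefore
\[
 \frac{\d{n}{1}}{\d{n-1}{1}} = \frac{\gamma_{n-2}}{\sigma_{n/2-1}} = \frac{\gamma_{n-2}}{\gamma_{n-2}\gamma_{n-1}} = \frac{1}{\gamma_{n-1}}.
\]
If $n$ is odd, then $n-1$ is even, and the two products of $\sigma_k$ cancel directly, leaving a single factor $\gamma_{n-1}$ in the denominator, so again the ratio is $1/\gamma_{n-1}$. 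Substituting this back into the expression from Lemma \ref{lem:ec-sym} yields (\ref{eec-sym}).

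There is no real obstacle here, because Lemma \ref{lem:Edet} already does the heavy lifting of expressing the expected eigenpolynomial of an arbitrary-weight orthogonally invariant symmetric random matrix through the skew-orthogonal polynomials, and Lemma \ref{lem:norm-sym} supplies the matching normalizing constants. The only thing to watch is bookkeeping: one must apply Lemma \ref{lem:Edet} at size $n-1$ (so the polynomial appearing in the integrand is $\widehat\varphi_{n-1}$, not $\widehat\varphi_n$), and one must keep track of which of $n$ and $n-1$ is even and which is odd when using Lemma \ref{lem:norm-sym}. Once the cancellation $\sigma_{n/2-1} = \gamma_{n-2}\gamma_{n-1}$ is noted, both parity cases collapse to the same clean factor $1/\gamma_{n-1}$, and the theorem follows.
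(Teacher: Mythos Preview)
Your proof is correct and follows exactly the approach the paper takes: the theorem is stated as an immediate consequence of combining Lemmas \ref{lem:ec-sym}, \ref{lem:norm-sym}, and \ref{lem:Edet}, and you have carried out that combination in full, including the explicit parity check that $\d{n}{1}/\d{n-1}{1}=1/\gamma_{n-1}$ via $\sigma_k=\gamma_{2k}\gamma_{2k+1}$.
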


The two lemmas below are crucial in proving Lemma \ref{lem:Edet}, and are of interest in themselves.

\begin{lemma}
\label{lem:monic}
Let $\varphi_i(x)$, $i\ge 0$, be a set of arbitrary monic polynomials of the indexed degree.
Suppose that
\begin{equation}
\label{3-term}
 x \varphi_i(x) = \varphi_{i+1}(x) + \sum_{j=0}^{i} t_{ij} \varphi_j(x).
\end{equation}
In matrix form, (\ref{3-term}) for $0\le i\le n-1$ is written as
\begin{equation}
\label{3-term-matrix}
 x \Phi_{n}(x) = L_n \Phi_{n}(x) + \bm{e}_n \varphi_n(x),
\end{equation}
where
\[
 \Phi_{n}(x) = \begin{pmatrix} \varphi_0(x) \\ \varphi_1(x) \\ \vdots \\ \varphi_{n-1}(x) \end{pmatrix}, \quad
 L_n =  \begin{pmatrix}
 t_{00} & 1      &        & 0 \\
 t_{10} & t_{11} & \ddots & \\
 \vdots & \vdots & \ddots & 1 \\
 t_{n-1,0} & t_{n-1,1} & \cdots & t_{n-1,n-1} \end{pmatrix}, \quad
 \bm{e}_n = \begin{pmatrix} 0 \\ \vdots \\ 0 \\ 1 \end{pmatrix}.
\]
Then,
\[
 \varphi_i(x) = \det(x I_i - L_i), \quad i\ge 1.
\]
\end{lemma}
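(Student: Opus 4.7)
The plan is to apply the adjugate identity to the matrix form \eqref{3-term-matrix}, reading $\det(xI_n - L_n)$ directly off its first coordinate, and then to evaluate a single minor using the lower-Hessenberg structure of $L_n$.

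First, I would rewrite \eqref{3-term-matrix} as
\[
 (xI_n - L_n)\,\Phi_n(x) \;=\; \bm{e}_n\,\varphi_n(x),
\]
and multiply on the left by $\mathrm{adj}(xI_n - L_n)$ to obtain
\[
 \det(xI_n - L_n)\,\Phi_n(x) \;=\; \mathrm{adj}(xI_n - L_n)\,\bm{e}_n\,\varphi_n(x).
\]
The right-hand vector is $\varphi_n(x)$ times the last column of the adjugate, so its first coordinate equals $(-1)^{n+1}M_{n,1}\,\varphi_n(x)$, where $M_{n,1}$ denotes the $(n,1)$-minor of $xI_n - L_n$. Since $\varphi_0(x)\equiv 1$, extracting the first coordinate collapses the identity to
\[
 \det(xI_n - L_n) \;=\; (-1)^{n+1}\,M_{n,1}\,\varphi_n(x),
\]
so it suffices to show $M_{n,1}=(-1)^{n-1}$; the lemma then follows by specializing $n = i$.

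The remaining step is to compute $M_{n,1}$. Deleting row $n$ and column $1$ of $xI_n - L_n$ leaves an $(n-1)\times(n-1)$ matrix whose $(j,k)$-entry is the $(j,k+1)$-entry of $xI_n - L_n$. Because $L_n$ is lower Hessenberg with $1$'s on its superdiagonal and $0$'s strictly above, every entry strictly above the diagonal of the new matrix vanishes, while each diagonal entry is inherited from the superdiagonal of $xI_n-L_n$ and is therefore $-1$. Hence the submatrix is lower triangular with determinant $(-1)^{n-1}$, which combined with the cofactor sign $(-1)^{n+1}$ yields exactly $\det(xI_n - L_n) = \varphi_n(x)$.

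There is essentially no real obstacle: the matrix packaging \eqref{3-term-matrix} of the three-term recursion is tailor-made for the adjugate trick, and the lower-Hessenberg shape of $L_n$ forces the relevant minor to be triangular. The only thing to keep an eye on is the bookkeeping of the two cofactor signs, which must cancel so that the right-hand side is $+\varphi_n(x)$ and not $-\varphi_n(x)$.
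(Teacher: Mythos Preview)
Your proof is correct. The adjugate identity applied to $(xI_n-L_n)\Phi_n(x)=\bm{e}_n\varphi_n(x)$ is a clean way to isolate $\det(xI_n-L_n)$, and your computation of the $(n,1)$-minor from the lower-Hessenberg structure is right; the two signs $(-1)^{n+1}$ and $(-1)^{n-1}$ do cancel.

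The paper takes a different, more classical route: it simply asserts that $p_i(x):=\det(xI_i-L_i)$ satisfies the same recursion \eqref{3-term} as the $\varphi_i$, whence $p_i=\varphi_i$ by induction from $p_0=\varphi_0=1$. Concretely, one expands $\det(xI_{i+1}-L_{i+1})$ along its last row; the Hessenberg structure makes each minor split into the block $xI_k-L_k$ times a lower-triangular $(-1)$-diagonal block, giving $p_{i+1}=(x-t_{ii})p_i-\sum_{k<i}t_{ik}p_k$. So both arguments ultimately hinge on the same triangular-minor observation, but the paper uses it inductively across all minors, while you use it once on the single minor $M_{n,1}$ after the adjugate trick has done the heavy lifting. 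Your approach is arguably slicker here because it avoids induction entirely; the paper's version, on the other hand, recovers the familiar fact that Hessenberg characteristic polynomials obey the recurrence encoded by the matrix, which is of independent interest.
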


\begin{proof}[Proof of Lemma \ref{lem:monic}]
It is easy to verify that $\det(x I_i - L_i)$ satisfies (\ref{3-term}).
\end{proof}

\begin{lemma}
\label{lem:even-odd}
Suppose that $A_n$ is an $n\times n$ real symmetric random matrix with density $p_n$ in (\ref{pdf}).
Then,
\begin{equation}
\label{even-odd}
 \int \E[\det(x I_n-A_n)] w(x) \dd x =
 \begin{cases}\displaystyle
  1/\gamma_{n} & (n:\mbox{even}), \\
  0            & (n:\mbox{odd}).
 \end{cases}
\end{equation}
\end{lemma}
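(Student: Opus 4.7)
The plan is to rewrite the left-hand side of (\ref{even-odd}) as an $(n+1)$-variable integral by treating $x$ as an extra eigenvalue coordinate, and then combine $\prod_{i=1}^n(x-\lambda_i)$ with the Vandermonde $\prod_{i<j}(\lambda_j-\lambda_i)$ into the full Vandermonde on $n+1$ variables, up to a sign that tracks the position of $x$ among the $\lambda_i$.

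Using the joint eigenvalue density (\ref{pdf-eigen}) with $\beta=1$ and Fubini, I would first write the integral as
$$\d{n}{1}\!\!\int_{\lambda_1<\cdots<\lambda_n}\!\!\prod_{i=1}^n w(\lambda_i)\!\!\prod_{1\le i<j\le n}\!\!(\lambda_j-\lambda_i)\Bigl[\int_{\R}w(x)\prod_{i=1}^n(x-\lambda_i)\dd x\Bigr]\dd\lambda.$$
For fixed ordered $\lambda_1<\cdots<\lambda_n$, I would split the inner $x$-integral at the $\lambda_k$'s. On the interval $\lambda_k<x<\lambda_{k+1}$ (with the conventions $\lambda_0=-\infty$, $\lambda_{n+1}=+\infty$), I would relabel the $n+1$ ordered reals as $\mu_0<\cdots<\mu_n$ with $\mu_k=x$. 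Using $\prod_{i>k}(x-\lambda_i)=(-1)^{n-k}\prod_{i>k}(\lambda_i-x)$ I would check the key identity
$$\prod_{i=1}^n(x-\lambda_i)\,\prod_{1\le i<j\le n}(\lambda_j-\lambda_i)=(-1)^{n-k}\!\!\prod_{0\le p<q\le n}\!\!(\mu_q-\mu_p).$$

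Since the map $(\lambda_1,\ldots,\lambda_n,x)\mapsto(\mu_0,\ldots,\mu_n)$ is a measure-preserving bijection between its domain and the ordered simplex $\{\mu_0<\cdots<\mu_n\}$ for each $k$, summing the $n+1$ pieces yields
$$\int\E[\det(xI_n-A_n)]w(x)\dd x=\Bigl(\sum_{k=0}^n(-1)^{n-k}\Bigr)\,\frac{\d{n}{1}}{\d{n+1}{1}}.$$
The sign sum vanishes for odd $n$, settling that case at once. For even $n$ the sign sum equals $1$, and Lemma~\ref{lem:norm-sym} evaluates $\d{n}{1}/\d{n+1}{1}$: the factors $\prod_{k=0}^{n/2-1}\sigma_k$ cancel between sizes $n$ (even) and $n+1$ (odd), leaving only the $\gamma_n$-factor contributed by the odd size, which gives the desired value on the right-hand side of (\ref{even-odd}).

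The only non-routine step is the sign bookkeeping: one must verify that reversing the $n-k$ factors of $\prod_i(x-\lambda_i)$ with $i>k$ and absorbing them into the Vandermonde of the $\mu$-tuple contributes exactly $(-1)^{n-k}$ with no extra permutation sign. This is immediate once the relabeling $(\lambda_1,\ldots,\lambda_k,x,\lambda_{k+1},\ldots,\lambda_n)\mapsto(\mu_0,\ldots,\mu_n)$ is written down explicitly, since it is order-preserving on both of its parts.
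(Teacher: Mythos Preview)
Your argument is correct and follows essentially the same route as the paper's proof: write the expectation via the ordered eigenvalue density, split the $x$-integral according to the position of $x$ among the $\lambda_i$, absorb $\prod_i(x-\lambda_i)$ into the $(n{+}1)$-variable Vandermonde with sign $(-1)^{n-k}$, and then read off the alternating sum times $\d{n}{1}/\d{n+1}{1}$, finishing with Lemma~\ref{lem:norm-sym}. The only point to double-check is the very last constant: your cancellation argument shows $\d{n}{1}/\d{n+1}{1}=\gamma_n$ (the odd-size normalizer contributes an extra factor $\gamma_n$ to $1/\d{n+1}{1}$), so make sure this matches the stated right-hand side.
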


\begin{proof}[Proof of Lemma \ref{lem:even-odd}]
Using the density function $\q{n}{1}(\lambda_1,\ldots,\lambda_n)$ in (\ref{pdf-eigen}),
the left-hand side of (\ref{even-odd}) is
\begin{align*}
& \int_{\R} w(x) \dd x \int_{\lambda_1<\cdots<\lambda_n} \prod_{i=1}^n (x-\lambda_i) \q{n}{1}(\lambda_1,\ldots,\lambda_n) \prod_{i=1}^n \dd\lambda_i \\
&= \d{n}{1} \int_{\R} w(x) \dd x \int_{\lambda_1<\cdots<\lambda_n} \prod_{i=1}^n (x-\lambda_i) \prod_{i=1}^n w(\lambda_i) \prod_{1\le i<j\le n}(\lambda_j-\lambda_i) \prod_{i=1}^n \dd\lambda_i \\
&= \d{n}{1} \sum_{k=0}^n \int_{\substack{\lambda_1<\cdots<\lambda_k<x, \\ x<\lambda_{k+1}<\cdots<\lambda_n}} w(x) \prod_{i=1}^n w(\lambda_i) \prod_{i=1}^n (x-\lambda_i) \prod_{1\le i<j\le n}(\lambda_j-\lambda_i) \,\dd x \prod_{i=1}^n \dd\lambda_i \\
&= \d{n}{1} \sum_{k=0}^n (-1)^{n-k} \times \int_{\lambda_1<\cdots<\lambda_n<\lambda_{n+1}} \prod_{i=1}^{n+1} w(\lambda_i)\prod_{1\le i\le j\le n+1} (\lambda_j-\lambda_i) \prod_{i=1}^{n+1} \dd\lambda_i \\
&= (\d{n}{1}/\d{n+1}{1}) \sum_{k=0}^n (-1)^{n-k} \times \int_{\lambda_1<\cdots<\lambda_n<\lambda_{n+1}} \q{n+1}{1}(\lambda_1,\ldots,\lambda_{n+1}) \prod_{i=1}^{n+1} \dd\lambda_i \\
&= \begin{cases} \d{n}{1}/\d{n+1}{1}=1/\gamma_n & (n:\mbox{even}), \\ 0 & (n:\mbox{odd}) \end{cases}
\end{align*}
by Lemma \ref{lem:norm-sym}.
\end{proof}

\subsection{Expectation of the squared eigenpolynomial $\E[\det(x I_n-A)^2]$: Hermitian case}

We move on to the Hermitian case.
Suppose that an $n\times n$ Hermitian random matrix $A$ is distributed with density $p_n$ in (\ref{pdf}).
The quantity $\E[\det(x I_n -A)^2] w(x)=\E[\prod_{i=1}^n(x-\lambda_i(A))^2] w(x)$ is proportional to the marginal density function of an unordered eigenvalue (referred to as the one-point correlation function) of the $(n+1)\times (n+1)$ Hermitian random matrix $A_{n+1}$ with density $p_{n+1}$ and has been well studied (e.g., \cite[Section 5.7]{mehta:2004}).
\begin{lemma}
\label{lem:Edet2}
Let $A_n$ be an $n\times n$ Hermitian random matrix with density $p_n$ in (\ref{pdf}).
Let $\phi_i(x)$, $i\ge 0$, be the monic orthogonal polynomial system associated with the weight function $w(x)$ satisfying (\ref{ortho}).
Then,
\[
 \E[\det(x I_n-A_n)^2] = \widehat\phi_n(x),
\]
where
\begin{equation}
\label{hatphi}
 \widehat\phi_n(x)
 = \phi_n(x)\phi'_{n+1}(x)-\phi_{n+1}(x)\phi'_n(x), \quad
 \phi'_i(x) = \frac{\dd\phi_i(x)}{\dd x}.
\end{equation}
\end{lemma}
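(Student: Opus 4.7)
The plan is to translate $\E[\det(x I_n - A_n)^2]$ into an integral of a squared Vandermonde determinant against the monic orthogonal polynomials, reduce it via Andreief's identity and orthogonality to a Christoffel--Darboux-type sum, and close with the confluent form of the Christoffel--Darboux identity.

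First I would use the eigenvalue joint density (\ref{pdf-eigen}) with $\beta=2$ and symmetrize over $\lambda_1,\dots,\lambda_n$ to write
\[
\E[\det(x I_n - A_n)^2] = \frac{\d{n}{2}}{n!}\int_{\R^n}\prod_{i=1}^n(x-\lambda_i)^2 \prod_{i=1}^n w(\lambda_i) \prod_{1\le i<j\le n}(\lambda_j-\lambda_i)^2 \prod_{i=1}^n\dd\lambda_i.
\]
The key algebraic identity is that, adjoining $x$ as an extra node (set $z_j=\lambda_j$ for $j\le n$ and $z_{n+1}=x$),
\[
\prod_{i=1}^n(x-\lambda_i)\prod_{1\le i<j\le n}(\lambda_j-\lambda_i) = \det(z_j^{i-1})_{i,j=1}^{n+1} = \det(\phi_{i-1}(z_j))_{i,j=1}^{n+1},
\]
where the second equality holds because the monic $\phi_i$'s differ from $z^{i}$ by lower-degree terms, which are absorbed by row operations.

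Next I would expand one copy of $\det(\phi_{i-1}(z_j))_{i,j=1}^{n+1}$ along the last column (the $x$-column), square, and apply Andreief's (Heine's) integration identity to the integral over $\lambda_1,\dots,\lambda_n$ weighted by $\prod_i w(\lambda_i)$. Orthogonality (\ref{ortho}) kills all cross terms, so only the ``diagonal'' index $k=l\in\{0,\dots,n\}$ survives, contributing $n!\prod_{i\ne k}h_i = (n!/h_k)\prod_{i=0}^n h_i$ per index. Using $\prod_{i=0}^n h_i = 1/\d{n+1}{2}$ from Lemma \ref{lem:norm-herm} together with $\d{n}{2}/\d{n+1}{2}=h_n$, everything collapses to
\[
\E[\det(x I_n - A_n)^2] = h_n \sum_{k=0}^n \frac{\phi_k(x)^2}{h_k}.
\]

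Finally, the confluent Christoffel--Darboux identity
\[
\sum_{k=0}^n \frac{\phi_k(x)^2}{h_k} = \frac{1}{h_n}\bigl(\phi_n(x)\phi'_{n+1}(x)-\phi_{n+1}(x)\phi'_n(x)\bigr)
\]
delivers $\widehat\phi_n(x)$ directly. The main care point I anticipate is the Andreief step: correctly tracking the sign $(-1)^{n-k}$ from the column expansion, verifying that all off-diagonal $(k\ne l)$ terms really vanish (their minors involve mismatched index sets of orthogonal polynomials, producing a singular moment matrix), and threading the normalizing constants through cleanly. The Christoffel--Darboux limit itself is standard, following from the three-term recurrence for orthogonal polynomials, so no difficulty is expected there.
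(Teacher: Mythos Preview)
Your proposal is correct and is precisely the standard derivation. The paper does not give its own detailed proof of this lemma; it simply observes that $\E[\det(xI_n-A_n)^2]\,w(x)$ is proportional to the one-point correlation function of the $(n{+}1)\times(n{+}1)$ ensemble and cites Mehta \cite[Section~5.7]{mehta:2004}, where exactly the Andreief/Cauchy--Binet computation followed by the confluent Christoffel--Darboux formula is carried out. Your bookkeeping---in particular, the vanishing of the off-diagonal minors because the Gram matrix acquires a zero row when the deleted indices differ, and the identification $\d{n}{2}\prod_{k=0}^n h_k = h_n$ via Lemma~\ref{lem:norm-herm}---is accurate.
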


By combining Lemmas \ref{lem:ec-herm}, \ref{lem:norm-herm}, and \ref{lem:Edet2},
we have the expected Euler characteristic formula for Hermitian random matrices.

\begin{theorem}
\label{thm:eec-herm}
Let $A$ be a Hermitian random matrix with density $p_n$ in (\ref{pdf}).
Let $M_x$ be the excursion set in (\ref{Mx}).
The expected Euler characteristic of $M_x$ is
\begin{equation}
\label{eec-herm}
 \E[\chi(M_x)] = \frac{1}{h_{n-1}}
 \int_x^\infty \widehat\phi_{n-1}(\lambda) w(\lambda) \dd\lambda,
\end{equation}
where $\widehat\phi_{n-1}(x)$ is defined in (\ref{hatphi}), and $h_{n-1}$ is the orthonormalizing constant (\ref{ortho}).
\end{theorem}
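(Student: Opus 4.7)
The plan is to obtain Theorem \ref{thm:eec-herm} as a direct composition of the three lemmas already established, with no additional analytic input. I would start from the integral representation of the expected Euler characteristic provided in Lemma \ref{lem:ec-herm},
\[
 \E[\chi(M_x)] = \frac{\d{n}{2}}{\d{n-1}{2}} \int_x^\infty \E[\det(\lambda I_{n-1}-B)^2]\, w(\lambda)\,\dd\lambda,
\]
where $B$ is an $(n-1)\times(n-1)$ Hermitian random matrix with density $p_{n-1}$.

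Next I would identify the integrand. Applying Lemma \ref{lem:Edet2} with the matrix size $n$ replaced by $n-1$ gives $\E[\det(\lambda I_{n-1}-B)^2] = \widehat\phi_{n-1}(\lambda)$, where $\widehat\phi_{n-1}$ is the Wronskian-type combination defined in (\ref{hatphi}) built from the orthogonal polynomials $\phi_{n-1}$ and $\phi_n$. This is precisely the object appearing in the theorem's statement.

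It remains to simplify the prefactor. By Lemma \ref{lem:norm-herm}, one has $1/\d{n}{2} = \prod_{i=0}^{n-1} h_i$ and $1/\d{n-1}{2} = \prod_{i=0}^{n-2} h_i$; dividing, the ratio telescopes to
\[
 \frac{\d{n}{2}}{\d{n-1}{2}} = \frac{1}{h_{n-1}}.
\]
Substituting this and the identification $\widehat\phi_{n-1}(\lambda)$ into the integral representation yields (\ref{eec-herm}).

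Because every step is a clean substitution, there is no genuine obstacle: all the real work has been absorbed into the prior lemmas. The only thing to be careful about is the index shift when invoking Lemma \ref{lem:Edet2} (applying it to $B$ of size $n-1$ produces $\widehat\phi_{n-1}$, not $\widehat\phi_n$), which then meshes correctly with the $h_{n-1}$ arising from the normalizing-constant ratio.
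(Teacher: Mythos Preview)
Your proposal is correct and matches the paper's own argument exactly: the paper states just before Theorem~\ref{thm:eec-herm} that the result follows by combining Lemmas~\ref{lem:ec-herm}, \ref{lem:norm-herm}, and~\ref{lem:Edet2}, which is precisely the three-step substitution you outline (including the index shift to $n-1$ and the telescoping ratio $\d{n}{2}/\d{n-1}{2}=1/h_{n-1}$).
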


\section{Expected Euler characteristic for classical random matrices}
\label{sec:classical}

\subsection{Gaussian, Wishart, and multivariate beta matrices}
\label{subsec:classical}

Recall that the Gaussian orthogonal ensemble $\textsf{GOE}_n$,
the Wishart matrix $\textsf{W}_n(n+\alpha,I_n)$,
and the multivariate beta (MANOVA) matrix $\textsf{B}_n(n+\alpha,n+\beta,I_n)$
have densities $p_n(A)$ proportional to
\begin{equation}
\label{pdf-sym}
 e^{-\frac{1}{2}\tr A^2}, \quad
  \det(A)^{\frac{\alpha-1}{2}} e^{-\frac{1}{2}\tr A}\1_{\{A \succ 0\}}, \quad
 \det(A)^{\frac{\alpha-1}{2}} \det(I_n-A)^{\frac{\beta-1}{2}}\1_{\{0\prec A \prec I\}},
\end{equation}
respectively, where $A\succ B$ and $B\prec A$ indicate that $A-B$ is positive definite.
Their counterparts in the Hermitian matrices are
the Gaussian unitary ensemble $\textsf{GUE}_n$,
the complex Wishart matrix $\textsf{CW}_n(n+\alpha,I_n)$,
and the complex multivariate beta matrix $\textsf{CB}_n(n+\alpha,n+\beta,I_n)$
with the densities $p_n(A)$ proportional to
\begin{equation}
\label{pdf-herm}
 e^{-\tr A^* A}, \quad \det(A)^{\alpha} e^{-\tr A}\1_{\{A \succ 0\}}, \quad
 \det(A)^{\alpha} \det(I_n-A)^{\beta}\1_{\{0\prec A \prec I\}},
\end{equation}
respectively.
For these random matrices, everything is written down explicitly.
The weight function $w(x)$ is tabulated in Table \ref{tab:weights}.

\renewcommand{\arraystretch}{1.25}
\begin{table}[h]
\caption{Weight functions $w(x)$}
\label{tab:weights}
\begin{center}
\begin{small}
\begin{tabular}{ll}
\toprule
\hfil Distribution\hfil & \hfil $w(x)$\hfil \\
\midrule
$\textsf{GOE}_n$ & $e^{-x^2/2}$ \\
$\textsf{W}_n(n+\alpha,I_n)$ & $x^{\frac{\alpha-1}{2}}e^{-x/2}\1_{\{x>0\}}$ \\
$\textsf{B}_n(n+\alpha,n+\beta,I_n)$ & $x^{\frac{\alpha-1}{2}}(1-x)^{\frac{\beta-1}{2}}\1_{\{0<x<1\}}$ \\
\midrule
$\textsf{GUE}_n$ & $e^{-x^2}$ \\
$\textsf{CW}_n(n+\alpha,I_n)$ & $x^{\alpha}e^{-x}\1_{\{x>0\}}$ \\
$\textsf{CB}_n(n+\alpha,n+\beta,I_n)$ & $x^{\alpha}(1-x)^{\beta}\1_{\{0<x<1\}}$ \\
\bottomrule
\end{tabular}
\end{small}
\end{center}
\end{table}

The orthogonal polynomials $\phi_n(x)$, the orthonormalizing constants $h_n$ in (\ref{ortho}), and the derivatives of $\phi_n(x)$ are summarized in Table \ref{tab:ortho_poly}.
Here, $H_n(x)$, $L_n^{(\alpha)}(x)$, and $P_n^{(\alpha,\beta)}(x)$ are conventional Hermite, Laguerre, and Jacobi polynomials associated the weights
$e^{-x^2}$, $x^{\alpha}e^{-x}$, and $(1-x)^{\alpha}(1+x)^{\beta}$,
respectively \cite{szego:1975}.
The monic Hermite and Laguerre polynomials are denoted by $\bar H_n(x)$ and $\bar L_n^{(\alpha)}(x)$.
The monic polynomial of the shifted Jacobi polynomial $(-1)^n P_n^{(\alpha,\beta)}(1-2x)$ on the range $(0,1)$ is denoted by $\bar P_n^{(\alpha,\beta)}(x)$.
We have the concrete forms of the expected Euler characteristic $\E[\chi(M_x)]$ by substituting the quantities in Table \ref{tab:ortho_poly} into (\ref{eec-herm}) in Theorem \ref{thm:eec-herm}.

The skew-orthogonal polynomials $\varphi_n(x)$, the constants $\sigma_k$ in (\ref{cond1}) and $\gamma_n$ in (\ref{cond2}) and (\ref{gamma-odd}), and $\widehat\varphi_n(x)$ in (\ref{hatvarphi}) are listed in Table \ref{tab:skew-ortho_poly}.
The expressions for $\varphi_n(x)$ and $\sigma_k$ are found in \cite{nagao-wadati:1991,nagao-forrester:1995,adler-etal:2000}.
Although their definition does not require the condition (\ref{cond2}),
the formulas they provide satisfy (\ref{cond2}).
We have the concrete forms of the expected Euler characteristic $\E[\chi(M_x)]$ by substituting the quantities in Table \ref{tab:skew-ortho_poly} into (\ref{eec-sym}) in Theorem \ref{thm:eec-sym}.

Note that \cite{nagao:2007} provides explicit skew-orthogonal formulas other than those listed in Table \ref{tab:skew-ortho_poly}.

\renewcommand{\arraystretch}{1.25}
\begin{table}[h]
\caption{Monic orthogonal polynomials}
\label{tab:ortho_poly}
\begin{center}
\begin{small}
\begin{tabular}{@{}cl@{}cc@{}}
\toprule
$w(x)$ & \hfil $\phi_n(x)$ \hfil & $h_n$ in (\ref{ortho}) & $\phi'_n(x)$ \\
\midrule
$e^{-x^2}$ & $\bar H_n(x)=2^{-n}H_n(x)$ & $h_n=2^{-n}\sqrt{\pi}n!$ & $n\bar H_{n-1}(x)$ \\
$x^{\alpha}e^{-x}\1_{\{x>0\}}$ &
$\bar L^{(\alpha)}_n(x)=(-1)^n n! L^{(\alpha)}_n(x)$ & $h^{(\alpha)}_n = n!\Gamma(n+\alpha+1)$ & $n\bar L_{n-1}^{(\alpha+1)}(x)$ \\
$x^{\alpha}(1-x)^{\beta} \1_{\{0<x<1\}}$ &
$\begin{array}{l}
\!\!\bar P_n^{(\alpha,\beta)}(x) = (-1)^n \\
\times\frac{n!\,\Gamma(n+\alpha+\beta+1)}{\Gamma(2n+\alpha+\beta+1)} P_n^{(\alpha,\beta)}(1-2x)
\end{array}$ & $h_n^{(\alpha,\beta)}$ & $n\bar P_{n-1}^{(\alpha+1,\beta+1)}(x)$ \\
\bottomrule
\end{tabular}
\end{small}
\smallskip
\begin{small}
\[
 h_n^{(\alpha,\beta)} = \frac{n!\Gamma(n+\alpha+1)\Gamma(n+\beta+1)\Gamma(n+\alpha+\beta+1)}{\Gamma(2n+\alpha+\beta+1)\Gamma(2n+\alpha+\beta+2)}.
\]
\end{small}
\end{center}
\end{table}

\renewcommand{\arraystretch}{1.25}
\begin{table}[h]
\caption{Monic skew-orthogonal polynomials}
\label{tab:skew-ortho_poly}
\begin{center}
\begin{small}
\begin{tabular}{@{}cl@{}c@{}c@{}c@{}}
\toprule
$w(x)$ & \hfil $\varphi_{n}(x)$, $n=2k,2k+1$\hfil & $\sigma_k$ in (\ref{cond1}) & $\gamma_n$ in (\ref{cond2}), (\ref{gamma-odd}) & $\widehat\varphi_n(x)$ \\
\midrule
$e^{-x^2/2}$ &
$\left\{\!\!\begin{array}{l}
\bar H_{2k}(x) \\ \bar H_{2k+1}(x)-k\bar H_{2k-1}(x) \end{array}\right.$ &
$\begin{array}{l}\sigma_k= \\ 2^{-2k+1}\sqrt{\pi}(2k)!\end{array}$ &
$\gamma_n=\sqrt{2}\Gamma(\frac{n+1}{2})$ & $\bar H_n(x)$ \\
%
$x^{\frac{\alpha-1}{2}}e^{-x/2}\1_{\{x>0\}}$ &
$\left\{\!\!\begin{array}{l}
\bar L^{(\alpha)}_{2k}(x) \\ \bar L^{(\alpha)}_{2k+1}(x) -2k(2k+\alpha) \\ \qquad\qquad\qquad \times \bar L^{(\alpha)}_{2k-1}(x) \end{array}\right.$ &
$\begin{array}{l}
\sigma_k^{(\alpha)}= 4(2k)! \\ \times\Gamma(2k+\alpha+1)\end{array}$ &
$\begin{array}{l}
\gamma_n^{(\alpha)}=\frac{2^{n+\frac{\alpha+1}{2}}}{\sqrt{\pi}} \\
\times\Gamma(\frac{n+1}{2})\Gamma(\frac{n+\alpha+1}{2})
\end{array}$ & $\bar L_{n}^{(\alpha)}(x)$ \\
%
$\begin{array}{r}x^{\frac{\alpha-1}{2}}(1-x)^{\frac{\beta-1}{2}} \\ \times\1_{\{0<x<1\}}\end{array}$ &
$\left\{\!\!\begin{array}{l}
\bar P_{2k}^{(\alpha,\beta)}(x) \\
\bar P_{2k+1}^{(\alpha,\beta)}(x) - \frac{\gamma_{2k+1}^{(\alpha,\beta)}}{\gamma_{2k-1}^{(\alpha,\beta)}}\bar P_{2k-1}^{(\alpha,\beta)}(x) \end{array}\right.$ &
$\sigma_k^{(\alpha,\beta)}$ & $\gamma_n^{(\alpha,\beta)}$ & $\bar P_n^{(\alpha,\beta)}(x)$ \\
\bottomrule
\end{tabular}
\end{small}
\smallskip
\begin{small}
\begin{align*}
\sigma_k^{(\alpha,\beta)} & = \frac{4\Gamma(2k+1)\Gamma(2k+\alpha+1)\Gamma(2k+\beta+1)\Gamma(2k+\alpha+\beta+1)}{\Gamma(4k+\alpha+\beta+1)\Gamma(4k+\alpha+\beta+3)}, \\
\gamma_n^{(\alpha,\beta)} & =
\frac{2^{2n+\alpha+\beta}\Gamma(\frac{n+1}{2})\Gamma(\frac{n+\alpha+1}{2})\Gamma(\frac{n+\beta+1}{2})\Gamma(\frac{n+\alpha+\beta+1}{2})}{\pi \Gamma(2n+\alpha+\beta+1)}, \\
\frac{\gamma_{2k+1}^{(\alpha,\beta)}}{\gamma_{2k-1}^{(\alpha,\beta)}} & =
\frac{2k(2k+\alpha)(2k+\beta)(2k+\alpha+\beta)}{(4k+\alpha+\beta+2)(4k+\alpha+\beta+1)(4k+\alpha+\beta)(4k+\alpha+\beta-1)}.
\end{align*}
\end{small}
\end{center}
\end{table}

\begin{remark}
{\rm (i)} In these three cases, it happens that $\widehat\varphi_n(x)=\phi_n(x)$ holds.
That is,
\begin{equation}
\label{Edet}
\E[\det(x I_n-A)]
= \begin{cases}
 \bar H_n(x), & A\sim\textsf{GOE}_n, \\
 \bar L^{(\alpha,\beta)}_n(x), & A\sim\textsf{W}_n(n+\alpha,I_n), \\
 \bar P^{(\alpha,\beta)}_n(x), & A\sim\textsf{B}_n(n+\alpha,n+\beta,I_n).
 \end{cases}
\end{equation}

{\rm (ii)} In spite of the coincidence $\widehat\varphi_n(x)=\phi_n(x)$ above,
the expected eigenpolynomial $\E[\det(x I_n-A)]$ for a real symmetric matrix $A$ is not necessarily an orthogonal polynomial.
A counter example is presented in the next subsection (Proposition \ref{prop:counter_ex}).

{\rm (iii)} Contrary to the real symmetric case,
it is known that the expected eigenpolynomial $\E[\det(x I_n-A)]$ of a Hermitian random matrix $A$ is an orthogonal polynomial with respect to $w(x)$ (\cite{deift:1999}).

{\rm (iv)}
The relations (\ref{Edet}) can be proven directly.
For example, in the first case $A_n\sim\textsf{GOE}_n$, there exists $P_n\in O(n)$ such that
\[
 A_n =
 P_n \left(\begin{array}{cccc|c}
 a_{1} & b_{2}  &         &         & \\
 b_{2} & a_{2}  & \ddots  &         & \\
       & \ddots & \ddots  & b_{n-1} & \\
       &        & b_{n-1} & a_{n-1} & b_{n} \\
 \hline
       &        &         & b_{n}   & a_{n}
 \end{array}\right) P_n^\top,
\]
where $a_i \sim \textsf{N}(0,1)$ and $b_i\sim \chi_{i-1}/\sqrt{2}$ independently
(\cite{dumitriu-edelman:2002}).
Substituting this into $\det(x I_n - A_n)$, and expanding at the last row/column, we obtain the three-term relation
\[
 \E[\det(x I_n-A_n)] = \E[x-a_n]\E[\det(x I_{n-1}-A_{n-1})] - \E[b_n^2]\E[\det(x I_{n-2}-A_{n-2})],
\]
from which the first relation in (\ref{Edet}) follows.
This approach works for the other two cases.
\end{remark}

\begin{corollary}
As a corollary to Lemma \ref{lem:even-odd}, we have from (\ref{Edet}) that
\begin{align*}
& \int_{-\infty}^{\infty} \bar H_n(x) e^{-x^2/2} \dd x =
\begin{cases}
 1/\gamma_{n} & (n:\mbox{even}), \\
 0            & (n:\mbox{odd}),
\end{cases} \\
& \int_{0}^{\infty} \bar L^{(\alpha)}_n(x) x^{(\alpha-1)/2} e^{-x/2} \dd x =
\begin{cases}
 1/\gamma_n^{(\alpha)} & (n:\mbox{even}), \\
 0                     & (n:\mbox{odd}),
\end{cases} \\
& \int_{0}^{1} \bar P^{(\alpha,\beta)}_n(x) x^{(\alpha-1)/2}(1-x)^{(\beta-1)/2} \dd x =
\begin{cases}
 1/\gamma_n^{(\alpha,\beta)} & (n:\mbox{even}), \\
 0                           & (n:\mbox{odd}),
\end{cases}
\end{align*}
where
$\bar H_n(x)$, $\bar L^{(\alpha)}_n(x)$, and $\bar P^{(\alpha,\beta)}_n(x)$ are monic Hermite, Laguerre, and shifted Jacobi polynomials defined in Table \ref{tab:ortho_poly}, respectively, and
$\gamma_n$, $\gamma_n^{(\alpha)}$, and $\gamma_n^{(\alpha,\beta)}$ are listed in Table \ref{tab:skew-ortho_poly}.
\end{corollary}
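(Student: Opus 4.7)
The plan is to apply Lemma~\ref{lem:even-odd} to each of the three classical ensembles in Section~\ref{subsec:classical} and then identify the expected eigenpolynomial on the left-hand side using (\ref{Edet}).

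First I would recall that Lemma~\ref{lem:even-odd} asserts, for any real symmetric random matrix $A_n$ with density $p_n$ of the form (\ref{pdf}),
\[
 \int_{\R} \E[\det(xI_n-A_n)]\, w(x)\,\dd x = \begin{cases} 1/\gamma_n & (n:\mbox{even}),\\ 0 & (n:\mbox{odd}),\end{cases}
\]
where $w(x)$ is the weight function driving the density $p_n$ and $\gamma_n$ is the constant from Definition~\ref{def:skew-ortho} associated with $w$. The right-hand side of the Lemma is universal; only $w(x)$ and $\gamma_n$ change with the ensemble.

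Next, I would specialize to $A_n\sim \textsf{GOE}_n$, $A_n\sim\textsf{W}_n(n+\alpha,I_n)$, and $A_n\sim\textsf{B}_n(n+\alpha,n+\beta,I_n)$ in turn. From Table~\ref{tab:weights} the corresponding weights are $e^{-x^2/2}$, $x^{(\alpha-1)/2}e^{-x/2}\1_{\{x>0\}}$, and $x^{(\alpha-1)/2}(1-x)^{(\beta-1)/2}\1_{\{0<x<1\}}$, and by (\ref{Edet}) the expected eigenpolynomial $\E[\det(xI_n-A_n)]$ equals the corresponding monic classical orthogonal polynomial $\bar H_n(x)$, $\bar L_n^{(\alpha)}(x)$, or $\bar P_n^{(\alpha,\beta)}(x)$ defined in Table~\ref{tab:ortho_poly}. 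Substituting these two identifications into Lemma~\ref{lem:even-odd} gives, in each case, exactly the three integrals stated in the corollary, with the values $1/\gamma_n$, $1/\gamma_n^{(\alpha)}$, $1/\gamma_n^{(\alpha,\beta)}$ taken from Table~\ref{tab:skew-ortho_poly}.

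There is no real obstacle: the argument is a direct substitution once (\ref{Edet}) is available, and the only bookkeeping is to make sure that the $\gamma$ constants invoked for each ensemble are the ones listed in Table~\ref{tab:skew-ortho_poly}, which are the constants coming from Definition~\ref{def:skew-ortho} applied to that particular weight. I would state these substitutions in one short display per ensemble and conclude.
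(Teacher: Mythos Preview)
Your proposal is correct and matches the paper's own argument: the corollary is stated as a direct consequence of Lemma~\ref{lem:even-odd} together with (\ref{Edet}), and the paper gives no additional proof beyond that sentence. Your write-up simply makes explicit the substitution of the three classical weights from Table~\ref{tab:weights} and the identification of $\E[\det(xI_n-A_n)]$ with the monic orthogonal polynomials via (\ref{Edet}), which is exactly what the paper intends.
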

These formulas were proven by \cite[Appendix A]{nagao-forrester:1995}
 by direct calculations.
It is not obvious that the integrals vanish when $n$ is odd.
The proof of Lemma \ref{lem:even-odd} reveals the universal structure behind.

\subsection{Positive definite conditional GOE}
\label{subsec:pd_GOE}

In this subsection, we deal with the random matrix $A\in\Sym(n)$ with density proportional to
\[
 e^{-\frac{1}{2}\tr A^2} \1_{\{A\succ 0\}} = \prod_{i=1}^n w(\lambda_i), \quad w(\lambda) = e^{-\frac{1}{2}\lambda^2}\1_{\{\lambda>0\}},
\]
where $\lambda_1\le\cdots\le\lambda_n$ are ordered eigenvalues of $A$.
The distribution of $A$ is $\textsf{GOE}_n$ under the condition that $A$ is positive definite, denoted by $A\sim\textsf{GOE}_n|_{\succ 0}$.
This distribution appears in the one-sided likelihood ratio test for the equality of two Wishart matrices \cite{kuriki:1993}.
The orthogonal polynomial associated with the weight function $e^{-\lambda^2/2}\1_{\{\lambda>0\}}$ is referred to as the half-range generalized Hermite polynomials, but the explicit expression is not known (\cite{ball:2002}).
Here, we derive the skew-orthogonal polynomial $\varphi_n(x)$ and $\widehat\varphi_n(x)=\E[\det(x I_n-A)]$, and the coefficients $\sigma_k$ and $\gamma_i$, at up to $n=3$ according to the procedure described in Section \ref{sec:ortho_poly}.

We first evaluate the moment $\mu_i$ in (\ref{moment}) and the skew-moment in (\ref{skew-moment}):
\[
 s_{ij} = \int_0^\infty x^i e^{-x^2/2} \dd x \biggl( \int_x^\infty - \int_0^x\biggr) y^j e^{-y^2/2} \dd y.
\]
The recursion formula below can be used to evaluate it:
\[
 s_{ij} = 2\int_0^\infty x^{i+j-1} e^{-x^2} \dd x -\delta_{j,1} \int_0^\infty x^{i} e^{-x^2/2} \dd x + (j-1) s_{i,j-2}, \quad s_{ii}=0, \quad s_{ji}=-s_{ij}.
\]

$\bullet$ $\varphi_i(x)$ and $\widehat\varphi_i(x)$ in (\ref{hatvarphi}):
\begin{align*}
\varphi_0(x) =& \widehat\varphi_0(x) = 1 \\
\varphi_1(x) =& \widehat\varphi_1(x) = x-\sqrt{\frac{2}{\pi }} \\
\varphi_2(x) =& \widehat\varphi_2(x) = x^2-\frac{\sqrt{2}+2}{\sqrt{\pi}}x+\frac{1}{\sqrt{2}} \\
\varphi_3(x) =&
 x^3-\frac{(\sqrt{2}-1)\sqrt{2\pi}}{\pi-2\sqrt{2}}x^2
+\frac{4(\sqrt{2}+3)-(3\sqrt{2}+1)\pi}{\sqrt{2}(\pi-2\sqrt{2})}x
-\frac{4(\sqrt{2}+1)-3\pi}{\sqrt{\pi}(\pi-2\sqrt{2})} \\
\widehat\varphi_3(x) =&
x^3-\frac{(\sqrt{2}-1)\sqrt{2\pi}}{\pi-2\sqrt{2}}x^2
+\frac{8\sqrt{2}-3\pi}{2(\pi-2\sqrt{2})}x
-\frac{(-2\sqrt{2}+3)\sqrt{\pi}}{\sqrt{2}(\pi-2\sqrt{2})}
\end{align*}

$\bullet$ $\sigma_k$ in (\ref{cond1}):
\[
 (\sigma_0,\sigma_1)=
 \Biggl(\frac{(\sqrt{2}-1)\sqrt{\pi}}{\sqrt{2}},\frac{(\sqrt{2}+3)(-4\sqrt{2}+7\pi-16)}{28\sqrt{\pi}}\Biggr)
\]

$\bullet$ $\gamma_i$ in (\ref{cond2}) and (\ref{gamma-odd}):
\[
 (\gamma_0,\gamma_1,\gamma_2,\gamma_3)
 = \Biggl(\sqrt{\frac{\pi}{2}},\sqrt{2}-1,\frac{(\sqrt{2}+1)(\pi-2\sqrt{2})}{2\sqrt{\pi}},\frac{(2\sqrt{2}-1)(-4\sqrt{2}-16+7\pi)}{14(\pi-2\sqrt{2})}\Biggr)
\]

Figure \ref{fig:cond_goe} shows the upper probabilities of the largest eigenvalue of positive definite GOE matrices ($\textsf{GOE}_n|_{\succ 0}$, $n=3,4$) and their EEC approximations provided by Theorem \ref{thm:eec-sym}.
The EEC approximation is liberal as we have seen in Section \ref{subsec:real_symmetric}.
The expected Euler characteristic method  precisely approximates the tail distribution of the largest eigenvalue.

\begin{figure}[h]
\begin{center}
\begin{tabular}{ccc}
\scalebox{0.70}{\includegraphics{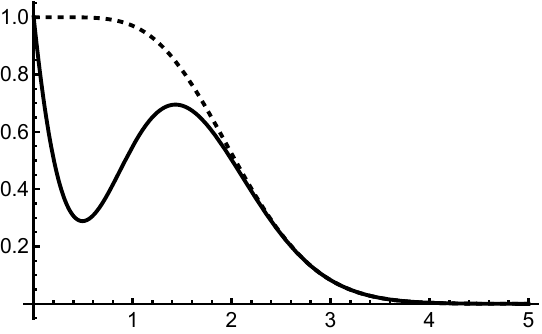}}
&\ \ &
\scalebox{0.70}{\includegraphics{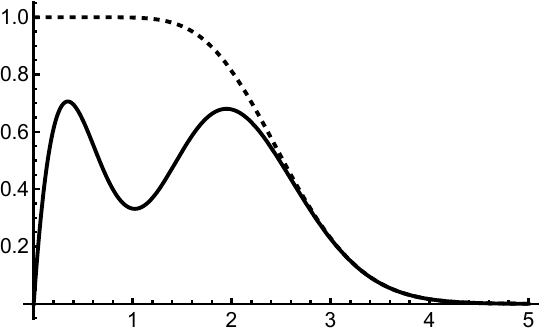}}
\\
{\small $\textsf{GOE}_3|_{\succ 0}$} && {\small $\textsf{GOE}_4|_{\succ 0}$}
\end{tabular}
\caption{Upper probabilities of the largest eigenvalue of positive definite GOE matrices.}
\label{fig:cond_goe}
\begin{small}
Dashed line: positive definite GOE, Solid line: EEC approximation.
\end{small}
\end{center}
\end{figure}

\begin{proposition}
\label{prop:counter_ex}
$\widehat\varphi_i(x)$, $i\ge 0$, cannot be an orthogonal polynomial under any weight function $w(x)$.
\end{proposition}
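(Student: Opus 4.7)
The plan is to invoke Favard's theorem: a monic polynomial sequence $\{p_n(x)\}_{n\ge 0}$ with $\deg p_n = n$ is a system of orthogonal polynomials with respect to some linear moment functional if and only if it satisfies a three-term recurrence
\[
 p_{n+1}(x) = (x - a_n)\,p_n(x) - b_n\,p_{n-1}(x), \quad n\ge 1.
\]
In particular, if $\{\widehat\varphi_i\}$ were orthogonal for some weight $w(x)$, such a recurrence would have to hold at every index, and it suffices to exhibit a single index at which it fails.

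I would take the four polynomials $\widehat\varphi_0, \widehat\varphi_1, \widehat\varphi_2, \widehat\varphi_3$ written out explicitly above and seek constants $a_2, b_2, c_2$ such that
\[
 \widehat\varphi_3(x) = (x - a_2)\widehat\varphi_2(x) - b_2\,\widehat\varphi_1(x) - c_2,
\]
these being uniquely pinned down by matching the $x^2$, $x$, and constant coefficients in turn. Favard's theorem forces $c_2 = 0$. Carrying out the elimination by routine arithmetic in $\mathbb{Q}(\sqrt{2})[\pi,\sqrt{\pi}]$ and clearing denominators yields
\[
 c_2 = \frac{(1 + 3\sqrt{2})\pi^2 - 4(2\sqrt{2} + 5)\pi + 8(2\sqrt{2} + 3)}{\pi\,\sqrt{\pi}\,(\pi - 2\sqrt{2})}.
\]

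The main obstacle is not the algebra, which is tedious but mechanical, but rather the rigorous verification that the numerator
\[
 E := (1 + 3\sqrt{2})\pi^2 - 4(2\sqrt{2} + 5)\pi + 8(2\sqrt{2} + 3)
\]
does not vanish; numerically $E \approx -4.6 \times 10^{-3}$, small enough that an accidental cancellation is not obviously excluded. The clean resolution is the Lindemann--Weierstrass theorem: each of the coefficients $1 + 3\sqrt{2}$, $-4(2\sqrt{2}+5)$, and $8(2\sqrt{2}+3)$ lies in the algebraic number field $\mathbb{Q}(\sqrt{2})$ and is manifestly nonzero, while $\pi$ is transcendental over any algebraic extension of $\mathbb{Q}$. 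Hence no nonzero polynomial in $\pi$ with algebraic coefficients can vanish at $\pi$, so $E \neq 0$, whence $c_2 \neq 0$, the three-term recurrence fails, and $\{\widehat\varphi_i\}$ cannot form an orthogonal polynomial system for any weight function $w(x)$.
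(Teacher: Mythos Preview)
Your argument is correct. The approach is close in spirit to the paper's but packaged differently. The paper assumes a weight $w$ exists, introduces the normalized moments $\nu_k=\int x^k w\,\dd x/\int w\,\dd x$, writes the four orthogonality relations $\langle\widehat\varphi_i,\widehat\varphi_j\rangle=0$ for $0\le i<j$, $i+j\le 3$ as four linear equations in $(\nu_1,\nu_2,\nu_3)$, and asserts that this $4\times 3$ system is inconsistent. You instead invoke Favard's theorem to reduce the question to a purely algebraic condition on the coefficients of $\widehat\varphi_0,\ldots,\widehat\varphi_3$, namely the vanishing of the $\widehat\varphi_0$--component $c_2$ in the expansion of $\widehat\varphi_3-x\widehat\varphi_2$. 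The two tests are equivalent: eliminating $\nu_1,\nu_2,\nu_3$ from three of the paper's equations and substituting into the fourth yields exactly (a nonzero multiple of) your $c_2$. What your route buys is that no hypothetical weight or moments ever enter, and the transcendence of $\pi$ turns the delicate numerical check (the paper's ``it can be checked'') into a one-line rigorous verification that the quadratic $E\in\mathbb{Q}(\sqrt{2})[\pi]$ with nonzero leading coefficient cannot vanish. What the paper's route buys is that it avoids citing Favard's theorem and stays entirely within elementary linear algebra.
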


\begin{proof}
Suppose that such weight function $w(x)$ exists.
Let
\[
 \nu_k=\frac{\int x^k w(x) \dd x}{\int w(x) \dd x}.
\]
Then, the system
\[
 0=\frac{\int\widehat\varphi_i(x)\widehat\varphi_j(x) w(x) \dd x}{\int w(x) \dd x} \quad (0\le i<j,\ i+j\le 3)
\]
consists of four equations with three variables $\nu_1,\nu_2,\nu_3$.
It can be checked that the system has no solution.
\end{proof}

\section{Asymptotics when the matrix size approaches to infinity}
\label{sec:asymptotics}

When the matrix size approaches to infinity, the largest eigenvalue of a random matrix converges to a limit after a suitable normalization.
In this section we conduct an asymptotic analysis for the expected Euler characteristic.
For this purpose, the edge asymptotics for the Hermite, Laguerre, and Jacobi polynomials established by I.~Johnstone (\cite[Proposition 1]{johnstone-ma:2012},
\cite[Section 5]{johnstone:2001}, and
\cite[Proposition 2]{johnstone:2008}) are useful.
We summarize the results in modified forms which is convenient to our purpose.

\begin{proposition}[\cite{johnstone-ma:2012,johnstone:2001,johnstone:2008}]
\label{prop:johnstone}
Let $\Ai(x)$ and $\Ai'(x)$ be the Airy function of the first kind and its derivative function, respectively.

{\rm (i)} Hermite polynomial:
As $n\to\infty$,
\begin{equation}
\label{hermite-lim}
 2\frac{\sigma_n}{\gamma_n} w(x) \bar H_{n}(x) \Big|_{x=\mu_n+\sigma_n s}, \quad
 \mu_n = \sqrt{2 n},\ \ %
 \sigma_n = 2^{-1/2}n^{-1/6},
\end{equation}
and its derivative with respect to $s$ converge to $\Ai(s)$ and $\Ai'(s)$,
respectively, uniformly on any half interval $[s_0,\infty)$,
where $w(x)=e^{-x^2/2}$, and $\gamma_n$ is as listed in Table \ref{tab:skew-ortho_poly}.

{\rm (ii)} Laguerre polynomial:
As $n,\alpha\to\infty$ with $\alpha/n\to \bar\alpha$,
\begin{equation}
 2\frac{\sigma_n}{\gamma_n^{(\alpha)}} w^{(\alpha)}(x) \bar L_{n}^{(\alpha)}(x)\Big|_{x=\mu_n+\sigma_n s}, \quad
 \mu_n = (1+\sqrt{1+\bar\alpha})^2 n, \ \ %
 \sigma_n = \frac{(1+\sqrt{1+\bar\alpha})^{4/3}}{(1+\bar\alpha)^{1/6}}n^{1/3},
\label{laguerre-lim}
\end{equation}
and its derivative with respect to $s$ converge to $\Ai(s)$ and $\Ai'(s)$,
respectively, uniformly on any half interval $[s_0,\infty)$,
where $w^{(\alpha)}(x)=x^{(\alpha-1)/2}e^{-x/2}$, and $\gamma_n^{(\alpha)}$ is as listed in Table \ref{tab:skew-ortho_poly}.

{\rm (iii)} Jacobi polynomial:
As $n,\alpha,\beta\to\infty$ with $\alpha/n\to\bar\alpha$, $\beta/n\to\bar\beta$,
\begin{equation}
\label{jacobi-lim-1}
 2\frac{\sigma_n}{\gamma_n^{(\alpha,\beta)}} w^{(\alpha,\beta)}(x) \bar P_n^{(\alpha,\beta)}(x)\Big|_{x=\mu_n+\sigma_n s},
\end{equation}
where
\begin{equation}
\label{jacobi-lim-2}
 \mu_n = \frac{1-\cos(\varphi+\gamma)}{2}, \ \ %
 \sigma_n = \frac{1}{2}\biggl(\frac{2 \sin^{4}(\varphi+\gamma)}{(2+\bar\alpha+\bar\beta)^2\sin\varphi\sin\gamma}\biggr)^{1/3}n^{-2/3},
\end{equation}
with $\cos\varphi=(-\bar\alpha+\bar\beta)/(2+\bar\alpha+\bar\beta)$,
$\cos\gamma=(\bar\alpha+\bar\beta)/(2+\bar\alpha+\bar\beta)$,
and the derivative of (\ref{jacobi-lim-1}) with respect to $s$ converge to $\Ai(s)$ and $\Ai'(s)$,
respectively, uniformly on any half interval on $[s_0,1)$ ($s_0>0$),
where $w^{(\alpha,\beta)}(x)=x^{(\alpha-1)/2}(1-x)^{(\beta-1)/2}$, and $\gamma_n^{(\alpha,\beta)}$ is as listed in Table \ref{tab:skew-ortho_poly}.
\end{proposition}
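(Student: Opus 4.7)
The statement is a translation of results already in the literature (Johnstone--Ma for Hermite, Johnstone for Laguerre and Jacobi) into a normalization convenient for the rest of the paper, so the proof is primarily a bookkeeping exercise: I would not re-derive the Plancherel--Rotach--type edge asymptotics from scratch, but rather pull each author's statement off the shelf and match the constants.

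The plan is to handle the three parts uniformly in three steps. First, for each polynomial family I would recall the precise statement from the cited reference. In each case Johnstone writes the asymptotic in terms of the orthonormal polynomial (or equivalently the orthogonal polynomial normalized so that the leading asymptotic envelope near the soft edge has a clean form), together with the relevant weight factor $e^{-x^2/4}$, $x^{\alpha/2}e^{-x/2}$, or $x^{\alpha/2}(1-x)^{\beta/2}$. Second, I would rewrite each of these in terms of the monic polynomials $\bar H_n$, $\bar L_n^{(\alpha)}$, $\bar P_n^{(\alpha,\beta)}$ used in this paper (the conversion factors between monic, orthonormal, and classical normalizations are standard and read off Table \ref{tab:ortho_poly}) and in terms of the weights $w(x)$, $w^{(\alpha)}(x)$, $w^{(\alpha,\beta)}(x)$ (which carry a square root relative to the density weight in (\ref{pdf-sym})). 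Finally, I would absorb everything into the prefactor $2\sigma_n/\gamma_n$, $2\sigma_n/\gamma_n^{(\alpha)}$, $2\sigma_n/\gamma_n^{(\alpha,\beta)}$ by evaluating the explicit $\gamma$'s from Table \ref{tab:skew-ortho_poly} asymptotically via Stirling's formula.

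Concretely, for the Hermite case I would start from the classical Plancherel--Rotach statement
\[
 n^{-1/12}e^{-x^2/4}H_n(x)\big|_{x=\sqrt{2n}+2^{-1/2}n^{-1/6}s}\;\longrightarrow\;\sqrt{\pi}\,2^{1/4}\,\Ai(s),
\]
convert $H_n=2^n\bar H_n$, move the weight to $e^{-x^2/2}$ by squaring, and then use $\gamma_n=\sqrt{2}\,\Gamma((n+1)/2)\sim \sqrt{2}\,(n/2)^{(n-1)/2}e^{-n/2}\sqrt{2\pi/(n/2)}^{1/2}$ (via Stirling) together with $\sigma_n=2^{-1/2}n^{-1/6}$ to verify that $2\sigma_n/\gamma_n$ produces exactly the constant needed to match the right-hand side to $\Ai(s)$. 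The Laguerre case proceeds identically, using $\bar L_n^{(\alpha)}=(-1)^n n! L_n^{(\alpha)}$ and Stirling for the double-gamma product $\Gamma(\tfrac{n+1}{2})\Gamma(\tfrac{n+\alpha+1}{2})$ in $\gamma_n^{(\alpha)}$; the Jacobi case uses the shifted monic polynomial and the quadruple-gamma product in $\gamma_n^{(\alpha,\beta)}$. In every case the ratio $\sigma_n/\gamma_n$ is exactly the factor that converts the classical normalization into the one producing $\Ai(s)$ in the limit, so no new analytic content is required beyond Stirling.

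For the derivative statement, since $\bar H_n$, $\bar L_n^{(\alpha)}$, $\bar P_n^{(\alpha,\beta)}$ are polynomials and the prefactors and weights are real-analytic in $s$, uniform convergence on $[s_0,\infty)$ (respectively on $[s_0,1)$ in the Jacobi case) together with Cauchy's derivative estimate on a small complex disk around each compact subinterval promotes pointwise convergence to uniform convergence of derivatives on every compact subinterval; one would need a mild additional argument at the right tail of $[s_0,\infty)$ to extend this, which is already carried out in the cited papers by bounding the polynomial by its Airy envelope. The main obstacle, and really the only non-trivial point, is the careful Stirling reduction of the explicit $\gamma$-constants in Table \ref{tab:skew-ortho_poly}---in particular in the Jacobi case, where four gamma factors appear in the numerator and one very large one in the denominator---to confirm that the cancellation yields precisely the prefactor stated in (\ref{hermite-lim})--(\ref{jacobi-lim-2}), with no hidden constant. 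Once that bookkeeping is verified, the three parts are immediate corollaries of Johnstone's theorems.
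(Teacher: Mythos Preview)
Your proposal is correct and matches the paper's treatment: the proposition is stated as a summary of Johnstone's results in a convenient normalization, and the paper gives no proof beyond the citations. Your outline of the bookkeeping---converting from Johnstone's normalization to the monic polynomials and the $\gamma$-constants of Table~\ref{tab:skew-ortho_poly} via Stirling---is exactly the implicit content of the phrase ``we summarize the results in modified forms,'' and in fact goes further in detail than the paper itself.
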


Applying these properties orthogonal polynomials,
we evaluate the limiting behavior of the expected Euler characteristic method.
A proof is provided in Section \ref{sec:proofs}.

\begin{theorem}
\label{thm:ec-limit}
Let $\E[\chi(M_x)]$ be the expected Euler characteristic of the excursion set (\ref{Mx}), where
$A$ is a real symmetric random matrix $A\sim\textsf{GOE}_n$, $\textsf{W}_n(n+\alpha,I_n)$, $\textsf{B}_n(n+\alpha,n+\beta,I_n)$, or
a Hermitian random matrix $A\sim\textsf{GUE}_n$, $\textsf{CW}_n(n+\alpha,I_n)$, $\textsf{CB}_n(n+\alpha,n+\beta,I_n)$.
Let $\mu_n$ and $\sigma_n$ be defined in (\ref{hermite-lim}) for GOE/GUE, (\ref{laguerre-lim}) for a real/complex Wishart, and (\ref{jacobi-lim-2}) for real/complex multivariate beta, respectively.
As $n\to\infty$ with $\alpha/n\to\bar\alpha$, $\beta/n\to\bar\beta$,
\[
 \E[\chi(M_x)]\Big|_{x=\mu_{n-1}+\sigma_{n-1} s} \to
 \widehat F_1(s) = \frac{1}{2}\int_{s}^\infty\Ai(x) \dd x
\]
when $A$ is real symmetric, and
\[
 \E[\chi(M_x)]\Big|_{x=\mu_{n-1}+\sigma_{n-1} s} \to
 \widehat F_2(s) = \int_{s}^\infty \bigl(\Ai'(x)^2-x\Ai(x)^2\bigr) \dd x
\]
when $A$ is Hermitian.
\end{theorem}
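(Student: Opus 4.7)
The plan is to substitute the edge-scaling $\lambda=\mu_{n-1}+\sigma_{n-1}t$ into the closed-form expressions for $\E[\chi(M_x)]$ provided by Theorems \ref{thm:eec-sym} and \ref{thm:eec-herm}, and then pass to the limit using Proposition \ref{prop:johnstone} together with its analogs for the Hermitian-case weights $e^{-x^{2}}$, $x^{\alpha}e^{-x}$, $x^{\alpha}(1-x)^{\beta}$ (which follow from Proposition \ref{prop:johnstone} by an affine rescaling of the polynomial argument).

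For the real symmetric case, using the identity $\widehat\varphi_{n-1}(\lambda)=\phi_{n-1}(\lambda)$ for each of GOE, Wishart, and multivariate beta (cf.\ Tables \ref{tab:ortho_poly}--\ref{tab:skew-ortho_poly}), Theorem \ref{thm:eec-sym} reduces to
\[
 \E[\chi(M_x)] \;=\; \frac{1}{\gamma_{n-1}}\int_x^{\infty}\phi_{n-1}(\lambda)\,w(\lambda)\,\dd\lambda.
\]
After changing variables, the integrand equals $\tfrac{\sigma_{n-1}}{\gamma_{n-1}}\phi_{n-1}(\mu_{n-1}+\sigma_{n-1}t)\,w(\mu_{n-1}+\sigma_{n-1}t)$, which by Proposition \ref{prop:johnstone} tends to $\tfrac{1}{2}\Ai(t)$, and integrating over $t\in[s,\infty)$ yields $\widehat F_1(s)$. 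For the Hermitian case, the Christoffel--Darboux identity for monic orthogonal polynomials,
\[
 K_n(\lambda,\lambda) \;:=\; \sum_{k=0}^{n-1}\frac{\phi_k(\lambda)^{2}}{h_k} \;=\; \frac{\phi_{n-1}(\lambda)\phi_n'(\lambda)-\phi_n(\lambda)\phi_{n-1}'(\lambda)}{h_{n-1}} \;=\; \frac{\widehat\phi_{n-1}(\lambda)}{h_{n-1}},
\]
recasts Theorem \ref{thm:eec-herm} as $\E[\chi(M_x)]=\int_x^{\infty}K_n(\lambda,\lambda)\,w(\lambda)\,\dd\lambda$, i.e.\ as the integral of the one-point correlation function. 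Edge convergence of $\phi_n,\phi_{n-1}$ and their derivatives then implies
\[
 \sigma_{n-1}K_n(\mu_{n-1}+\sigma_{n-1}t,\mu_{n-1}+\sigma_{n-1}t)\,w(\mu_{n-1}+\sigma_{n-1}t)\longrightarrow K_{\Ai}(t,t)=\Ai'(t)^{2}-t\Ai(t)^{2},
\]
the diagonal of the Airy kernel; the last identity comes from L'Hopital applied to $K_{\Ai}(u,v)=\bigl(\Ai(u)\Ai'(v)-\Ai'(u)\Ai(v)\bigr)/(u-v)$ together with the Airy equation $\Ai''(x)=x\Ai(x)$, and integration over $[s,\infty)$ delivers $\widehat F_2(s)$.

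The main obstacle is the interchange of limit and integration. Proposition \ref{prop:johnstone} supplies convergence only uniformly on half-intervals $[s_0,\infty)$ (or $[s_0,1)$ in the Jacobi case), so a uniform-in-$n$ integrable majorant is required to control the regime $t\to\infty$. Such a majorant can be furnished by the classical Plancherel--Rotach exponential decay of Hermite, Laguerre, and Jacobi polynomials past the soft edge, which matches the Airy tail $\Ai(t)\sim(2\sqrt{\pi})^{-1}t^{-1/4}\exp(-\tfrac{2}{3}t^{3/2})$. Once such a majorant is in place, dominated convergence closes both cases. For the Jacobi (multivariate beta) weight the $t$-integration is automatically truncated at $t=(1-\mu_{n-1})/\sigma_{n-1}$, which tends to $\infty$ as $n\to\infty$, so no additional difficulty arises at the right endpoint.
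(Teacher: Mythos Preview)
Your real-symmetric argument is essentially identical to the paper's: both substitute $\lambda=\mu_{n-1}+\sigma_{n-1}t$ into $\frac{1}{\gamma_{n-1}}\int_x^\infty\phi_{n-1}(\lambda)w(\lambda)\,\dd\lambda$ and invoke Proposition~\ref{prop:johnstone} directly.

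Your Hermitian argument, however, takes a different route. You recognise the integrand $\frac{1}{h_{n-1}}\widehat\phi_{n-1}(\lambda)w(\lambda)$ as the diagonal Christoffel--Darboux kernel $K_n(\lambda,\lambda)w(\lambda)$ and then appeal to the standard edge convergence of the rescaled kernel to the Airy kernel $K_{\Ai}(t,t)=\Ai'(t)^2-t\Ai(t)^2$. The paper instead avoids the two-index object $(\phi_{n-1},\phi_n)$ altogether: it uses the derivative relation $\bar H_n'=n\bar H_{n-1}$ and the recurrence $\bar H_n=x\bar H_{n-1}-\tfrac12\bar H_{n-1}'$ (and their Laguerre/Jacobi analogues) to rewrite the whole integrand in terms of $\psi_{n-1}=\frac{\sigma_{n-1}}{\gamma_{n-1}}w\,\bar H_{n-1}$ and its derivative, after which Proposition~\ref{prop:johnstone} applies at the single index $n-1$. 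Your approach is more conceptual and treats all three ensembles at once, but it tacitly requires the asymptotics of $\phi_n$ and $\phi_{n-1}$ (and their derivatives) \emph{at the same} centring $\mu_{n-1}$, which is one step beyond Proposition~\ref{prop:johnstone} as stated; the paper's reduction sidesteps that. Conversely, you are more explicit than the paper about the interchange of limit and integral: the paper simply cites uniform convergence on $[s_0,\infty)$, whereas you correctly note that an integrable Plancherel--Rotach-type majorant is what actually licenses dominated convergence.
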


\begin{remark}
Using the same scaling factors $\mu_n$ and $\sigma_n$,
it holds that as $n\to\infty$,
\begin{align*}
\Pr(\lambda_n(A)> x) \Big|_{x=\mu_n+\sigma_n s} \to F_{\textsf{TW}_1}(s) &\quad (\mbox{real symmetric}), \\
\Pr(\lambda_n(A)> x) \Big|_{x=\mu_n+\sigma_n s} \to F_{\textsf{TW}_2}(s) &\quad (\mbox{Hermitian}),
\end{align*}
where $F_{\textsf{TW}_1}$ and $F_{\textsf{TW}_2}$ are the upper probability functions of the Tracy-Widom distribution of the first and second kinds.
This normalization is referred to as edge-asymptotics.
\end{remark}

Figure \ref{fig:finite} shows the upper probabilities of the largest eigenvalue of the real Wishart $\textsf{W}_3(4,I)$ and complex Wishart $\textsf{CW}_3(4,I)$, and their EEC approximations.
The EEC approximation is conservative for the complex Wishart, and liberal for the real Wishart.
The remarkable advantage of the expected Euler characteristic method is its accuracy.
The magnitude of the relative error
\[
 \Delta_n(x) = \frac{\E[\chi(M_x)]-\Pr(\lambda_n(A)>x)}{\Pr(\lambda_n(A)>x)}
\]
is in general exponentially smaller than $\Pr(\lambda_n(A)>x)$ when $x$ is large.
See \cite{kuriki-takemura:2008} for details.

\begin{figure}[h]
\begin{center}
\begin{tabular}{ccc}
\scalebox{0.70}{\includegraphics{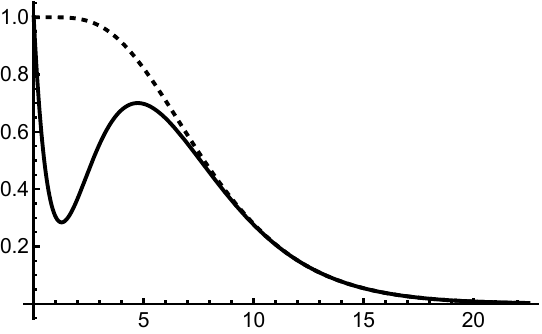}}
&\ \ &
\scalebox{0.70}{\includegraphics{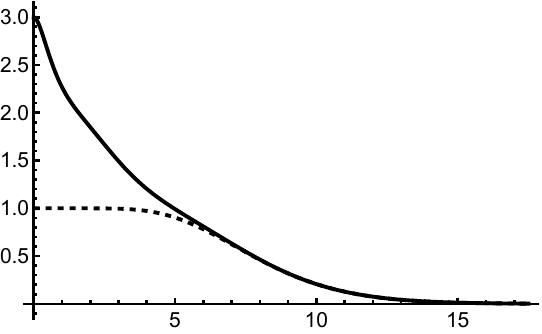}}
\\
{\small Wishart $\textsf{W}_3(4,I)$} && {\small Complex Wishart $\textsf{CW}_3(4,I)$}
\end{tabular}
\caption{Upper probabilities of the largest eigenvalue ($n=3$).}
\label{fig:finite}
\begin{small}
Dashed line: Wishart, Solid line: EEC approximation.
\end{small}
\end{center}
\end{figure}

The theorem below indicates that the tail accuracy is inherited when $n$ goes to infinity.
\begin{theorem}
\label{thm:airy-tw}
Define the limiting relative error to the Tracy-Widom distribution as follows:
\[
 \Delta_\beta(s) = \frac{\widehat F_\beta(s)-F_{\textsf{TW}_\beta}(s)}{F_{\textsf{TW}_\beta}(s)}, \quad \beta=1,2.
\]
Then,
\[
 \Delta_\beta(s) \sim \begin{cases}
\displaystyle
 - \frac{1}{2^{5}\pi^{1/2}}\cdot s^{-\frac{9}{4}}e^{-\frac{2}{3}s^{3/2}} & (\beta=1:\mbox{real symmetric case}), \\[3mm]
\displaystyle
   \frac{1}{2^{10}\pi}\cdot s^{-\frac{9}{2}}e^{-\frac{4}{3}s^{3/2}} & (\beta=2:\mbox{Hermitian case}),
\end{cases}
\]
as $s\to\infty$.
It holds that $\Delta_2(s)\sim\Delta_1(s)^2$.
\end{theorem}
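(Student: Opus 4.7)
For the Hermitian case ($\beta=2$), the plan is to use the Fredholm determinant representation $F_{\textsf{TW}_2}(s)=1-\det(I-K)$, where $K=K_{\mathrm{Airy}}$ acts on $L^2(s,\infty)$. Since $\Ai''=x\Ai$, the diagonal satisfies $K(x,x)=\Ai'(x)^2-x\Ai(x)^2$, which is precisely the integrand of $\widehat F_2(s)$, so $\widehat F_2(s)=\tr K$. The Fredholm expansion then gives
\begin{equation*}
\widehat F_2(s)-F_{\textsf{TW}_2}(s)=\tfrac{1}{2}\bigl[(\tr K)^2-\tr(K^2)\bigr]+O\bigl((\tr K)^3\bigr),
\end{equation*}
reducing the task to an asymptotic evaluation of $(\tr K)^2$ and $\tr(K^2)$ by Laplace's method. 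The Airy asymptotics yield $\tr K\sim(16\pi s^{3/2})^{-1}e^{-\frac{4}{3}s^{3/2}}$, and both $(\tr K)^2$ and $\tr(K^2)$ agree at leading polynomial order (each is $(256\pi^2 s^3)^{-1}e^{-\frac{8}{3}s^{3/2}}$), so the nonzero contribution to the difference comes from the next Laplace order in $\tr(K^2)=\iint K(x,y)^2\,\dd x\,\dd y$; this is conveniently computed using the Christoffel--Darboux form of $K(x,y)$ and the substitution $(x,y)=(s+u,s+v)$. Dividing by $F_{\textsf{TW}_2}(s)\sim\tr K$ then produces the claimed $\Delta_2(s)$.

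For the real-symmetric case ($\beta=1$), I would use the Tracy--Widom GOE representation
\begin{equation*}
F_{\textsf{TW}_1}(s)=1-\exp\!\biggl(-\tfrac{1}{2}\int_s^\infty \bigl[q(x)+(x-s)q(x)^2\bigr]\,\dd x\biggr),
\end{equation*}
where $q$ is the Hastings--McLeod Painlev\'e II solution with $q(x)\sim\Ai(x)$. Setting $B(s)=\tfrac{1}{2}\int_s^\infty q\,\dd x$ and $A(s)=\tfrac{1}{2}\int_s^\infty (x-s)q(x)^2\,\dd x$, Taylor expansion of the exponential yields
\begin{equation*}
\widehat F_1(s)-F_{\textsf{TW}_1}(s)=\tfrac{1}{2}\int_s^\infty(\Ai-q)\,\dd x-A(s)+\tfrac{1}{2}\bigl(A(s)+B(s)\bigr)^2+\cdots.
\end{equation*}
The correction $\Ai-q=O(\Ai^3)$, established by the Painlev\'e II analysis in Appendix \ref{sec:painleve}, contributes only at order $e^{-2s^{3/2}}$ and is therefore negligible against the $e^{-\frac{4}{3}s^{3/2}}$ contributions from $A$ and $B^2/2$. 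The key observation is that at leading polynomial order $A\sim B^2/2\sim (32\pi s^{3/2})^{-1}e^{-\frac{4}{3}s^{3/2}}$, so these leading terms cancel in the expression above; the remainder arises from subleading Laplace corrections to $A$ and $B^2$, and dividing by $F_{\textsf{TW}_1}(s)\sim (4\sqrt{\pi})^{-1}s^{-3/4}e^{-\frac{2}{3}s^{3/2}}$ yields the claimed $\Delta_1(s)$.

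Finally, the relation $\Delta_2\sim\Delta_1^2$ is a direct consequence of the two asymptotics computed above; it can also be anticipated from the classical identity $F_1(s)=\sqrt{F_2(s)}\,E(s)$ with $E(s)\to 1$, which makes the GOE tail essentially the square root of the GUE tail. The main obstacle in both cases is the delicate cancellation of leading polynomial orders, which demands careful organization of the next-order Laplace expansions and, for $\beta=1$, a sufficiently precise description of the Painlev\'e II correction $q-\Ai$ provided by the appendix.
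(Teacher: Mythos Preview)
Your plan is sound and would give the stated asymptotics, but your route for $\beta=2$ differs from the paper's.  The paper treats both cases through the Painlev\'e~II representation of Tracy--Widom: writing $1-F_{\textsf{TW}_2}(s)=\exp\bigl(-\int_s^\infty(x-s)q(x)^2\,\dd x\bigr)$ and $\bigl(1-F_{\textsf{TW}_1}(s)\bigr)^2=\bigl(1-F_{\textsf{TW}_2}(s)\bigr)\exp\bigl(-\int_s^\infty q(x)\,\dd x\bigr)$, then inserting the exponential asymptotic expansion $q=q_0+q_1+\cdots$ of Theorem~\ref{thm:exp_asympt} (with $q_0=\Ai$ and $q_1\sim(2^5\pi^{3/2}x^{7/4})^{-1}e^{-2x^{3/2}}$) and comparing against the known Airy asymptotics that govern $\widehat F_\beta$.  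In other words, the appendix is the single engine driving both $\beta=1$ and $\beta=2$.

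Your $\beta=2$ argument instead exploits the Fredholm determinant $1-F_{\textsf{TW}_2}(s)=\det(I-K_{\mathrm{Airy}})$ together with the observation $\widehat F_2(s)=\tr K_{\mathrm{Airy}}$, which is a clean structural identification and sidesteps Painlev\'e~II entirely for that case; the price is the separate (and somewhat delicate) Laplace evaluation of $(\tr K)^2-\tr(K^2)$ at subleading polynomial order.  For $\beta=1$ your outline essentially matches the paper's, including the use of the appendix for the size of $q-\Ai$.  Either route involves the same cancellation phenomenon you flag---in the paper's language it appears as a cancellation between the $q_0^2$ piece of $R(s)=\int(x-s)q^2$ and the first Taylor correction of $e^{-R}$---so neither approach avoids the careful subleading bookkeeping.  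The concluding relation $\Delta_2\sim\Delta_1^2$ is immediate from the two formulas in both treatments.
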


\begin{proof}
In Theorem \ref{thm:exp_asympt}, we evaluate the order of the terms in the exponential asymptotic expansion of the solution of the Painlev\'e II differential equation.
Here, $F_{\textsf{TW}_1}$ and $F_{\textsf{TW}_2}$ are explicitly written based on the solution \cite{tracy-widom:1996}.
On the other hand, the asymptotic expansions of the Airy function $\Ai(x)$ and its derivative $\Ai'(x)$ that make up $\widehat F_1(s)$ and $\widehat F_2(s)$ are well-known \cite[Section 10.4]{abramowitz-stegun:1970}. 
\end{proof}

Figure \ref{fig:infinite} shows the upper probabilities of Tracy-Widom distributions and the limits of the EEC approximation.
The limits of the EEC formulas still approximate the true distributions in the limit.

\begin{figure}[h]
\begin{center}
\begin{tabular}{ccc}
\scalebox{0.75}{\includegraphics{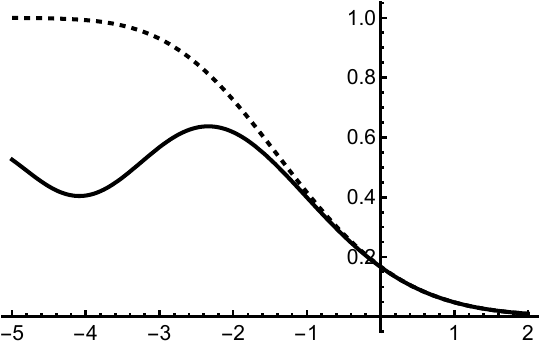}}
&\ \ &
\scalebox{0.75}{\includegraphics{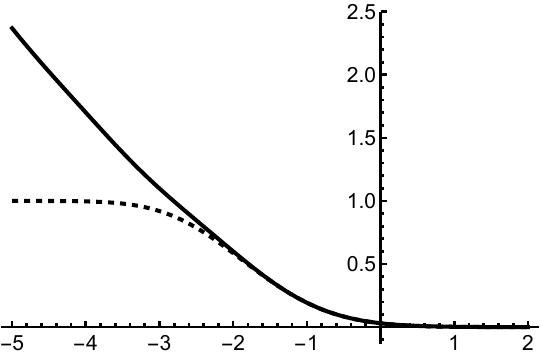}}
\\
{\small Real symmetric} && {\small Hermitian}
\end{tabular}
\caption{Upper probability of the largest eigenvalue when $n\to\infty$.}
\label{fig:infinite}
\begin{small}
Dashed line: Tracy-Widom, Solid line: Limit of EEC approximation.
\end{small}
\end{center}
\end{figure}

In this case, the expected Euler characteristic method behaves well as the matrix size $n$ goes to infinity.
However, this is not obvious, because $n\to\infty$ means that the dimension of the manifold $M$ goes to infinity, and the Morse theory used to calculate the Euler characteristic does not make sense.
The expected Euler characteristic method in an infinite dimensional space will be a challenging research topic in the future.

\section{Proofs}
\label{sec:proofs}

\subsection{Proofs of Lemmas \ref{lem:chi-sym}, \ref{lem:ec-sym}, \ref{lem:chi-herm} and \ref{lem:ec-herm}}
\label{subsec:morse}

\begin{proof}[Proof of Lemma \ref{lem:chi-sym}]
To analyze the behavior of $f$, we introduce the local coordinates $h=h(t)\in S(\R^{n\times 1})$, $t=(t_1,\ldots,t_{n-1})$.
For example, for a fixed point $h_0\in S(\R^{n\times 1})$, let $H_0$ be an $n\times n$ orthogonal matrix whose first column vector is $h_0$.
Then,
\[
\textstyle
 h(t)=H_0 \Bigl(\sqrt{1-\sum_{i=1}^{n-1} t_i^2},t_1,\ldots,t_{n-1}\Bigr)^\top
\]
is a local coordinate system of $ S(\R^{n\times 1})$ around $h_0$.
In addition, $h(t)h(t)^\top$ is a local coordinate of $M$ around $h_0 h_0^\top$.
Let $\partial_i=\partial/\partial t_i$, $h_i=\partial_i h$, and $h_{ij}=\partial_i\partial_j h$.
By taking the derivative of $h^\top h=1$, we have $h^\top h_i=0$ and $h^\top h_{ij}+h_i^\top h_j=0$.
Then, $\partial_i f=2 h^\top A h_i$, and
$\partial_i|_{t^*} f=0, \forall i$ ($t=t^*$ is a critical point)
$\iff$ $(A -\lambda I)h|_{t^*}=0, \exists\lambda$ ($h$ is an eigenvector).
In addition, $\partial_i\partial_j f=2 h_i^\top A h_j+2 h^\top A h_{ij}$,
and at a critical point $t=t^*$,
$\partial_i\partial_j|_{t^*} f=2(h_i^\top A h_j+h^\top A h_{ij})=2(h_i^\top A h_j-\lambda h_i^\top h_j)$.

When $h$ is an eigenvector of $A$, $A$ is written as
\[
 A = \lambda h h^\top + H B H^\top,
\]
where $H=H(h)$ is an $n\times (n-1)$ matrix-valued function of $h$ such that $(h,H)\in O(n)$.
Note that $\lambda = h^\top A h$ and $B=H^\top A H$.
We write $(h_1,\ldots,h_{n-1})=H T$, where $T\in\GL(n-1)$.
Then, $\partial_i\partial_j|_{t^*} f=2 T^\top(H^\top A H-(h^\top A h) I)T$.
According to the Morse theory,
if $f(U)$ is a Morse function,
\begin{align*}
 \chi(M_x)
&= \sum_{\mbox{\footnotesize critical point}} \1(f(U)\ge x)\,\sgn\det\bigl( -\partial_i\partial_j f \bigr) \\
&= \sum_{h\,:\,\mbox{\footnotesize eigenvector}} \1(h^\top A h \ge x)\,\sgn\det\Bigl((h^\top A h) I_{n-1} - H^\top A H \Bigr) \\
&= \sum_{k=1}^n \1(\lambda_k(A) \ge x)\,\sgn\prod_{l\ne k}(\lambda_k(A)-\lambda_l(A)) \\
&= \sum_{k=1}^n (-1)^{n-k}\1(\lambda_k(A)\ge x),
\end{align*}
where $\lambda_k(\cdot)$ is the $k$th smallest eigenvalue.
Here, $f(U)$ is a Morse function with probability one iff $A$ has distinct $n$ eigenvalues, which holds if $A$ has a density function with respect to the Lebesgue measure $\dd A$ of $\Sym(n)$.
\end{proof}

\begin{proof}[Proof of Lemma \ref{lem:chi-herm}]
We translate the problem of $\C^n$ into $\R^{2n}$.
The Hermitian matrix $A=B+\ii C\in\Herm(n)$ is represented as
\[
 A = \begin{pmatrix} B & -C \\ C & B \end{pmatrix} = \begin{pmatrix} B & C^\top \\ C & B \end{pmatrix} \in\Sym(2n).
\]
Let
\[
 f(U) = \tr(U A)/2, \quad U \in M = \{ \phi(h) \mid h\in S(\R^{2n\times 1}) \},
\]
where
\[
 \phi(h)=h h^\top + J h h^\top J^\top, \quad
 J = \begin{pmatrix} 0 & -I_n \\ I_n & 0 \end{pmatrix}.
\]
Here, $M$ is a class of the orthogonal projection matrix of rank $2$ in $\R^{2n}$.
Let
\[
 R(\theta) =
 \begin{pmatrix}
  \cos\theta I_n & -\sin\theta I_n \\ \sin\theta I_n & \cos\theta I_n
 \end{pmatrix}.
\]
By direct calculations, we see that
$f(\phi(h)A) = \tr(\phi(h) A)/2 = h^\top A h$, $R(\theta)^\top A R(\theta) = A$,
and $R(\theta)^\top \phi(h) R(\theta) = \phi(h)$ holds.

The dimension of $M$ is $2(n-1)$.
We introduce local coordinates $t=\bigl(t_1,\ldots,t_{2(n-1)}\bigr)$ on $M$.
For a fixed point $\phi(h_0) \in M$, $h_0 =(p_0^\top,q_0^\top)^\top$,
let $P_0+\ii Q_0$ be an $n\times n$ unitary matrix whose first column vector is $p_0+\ii q_0$, and let
\[
 h(t) = H_0 \begin{pmatrix} v(t) \\ w(t) \end{pmatrix}, \quad
 H_0 =
 \begin{pmatrix}
 P_0 & -Q_0 \\
 Q_0 &  P_0
\end{pmatrix} \in O(2n),
\]
where
\[
 \textstyle
 v(t)=\Bigl(\sqrt{1-\sum_{i=1}^{2(n-1)} t_i^2},t_1,\ldots,t_{n-1}\Bigr)^\top, \quad
 w(t)=\bigl(0,t_{n},\ldots,t_{2(n-1)}\bigr)^\top.
\]
Then, $\phi(h(t))\in M$, $h(0)=h_0$.
Moreover, because
\[
 E_i = \frac{\partial}{\partial t_i} \phi(h(t))\Big|_{t=0}, \quad 1\le i\le 2(n-1),
\]
are linearly independent, this parameterization is a local coordinate system of $M$ around $h_0$.

Let $H=H(h)$ be a $2n\times(n-1)$ matrix-valued smooth function of $h$ that satisfies $(h,J h,H,J H)\in O(2n)$.
For $h=(v^\top,w^\top)^\top$,
such $H$ is given as $H = (V^\top,W^\top)^\top$, where
$V,W\in\R^{n\times(n-1)}$ are matrices such that
$(v+\ii w,V+\ii W)$ is $n\times n$ unitary.

Here, $U=\phi(h)\in M$ is a critical point of $f(U) = h^\top A h$ iff
$\partial_i f=h_i^\top A h=0$ for $1\le i\le 2(n-1)$.
Moreover, because $AJ$ is skew-symmetric, $(J h)^\top A h=0$.
Note that $h$ is orthogonal to $J h$ and $h_i$, $1\le i\le 2(n-1)$,
and $\{J h,h_1,\ldots,h_{2(n-1)}\}$ are linearly independent around $h_0$
(because $J h$ and $h_i$'s are orthogonal at $h_0$).
Thus, at a critical point, we have
$(A-\lambda I)h=0,\,\exists\lambda$, that is, $h$ is an eigenvector of $A$.

Let
\[
 \bigl(h_1,\ldots,h_{2(n-1)}\bigr) = (H,J H) T + J h e^\top,
\]
where $T\in\GL(2(n-1))$ and $e$ is a $2(n-1)\times 1$ vector.
When $h$ is an eigenvector of $A$, $Jh$ is an eigenvector with the same eigenvalue, and $A$ is written as follows:
\[
 A = \lambda (h h^\top + J h h^\top J) + (H,J H) D \begin{pmatrix} H^\top \\ H^\top J^\top \end{pmatrix}.
\]

Hence, at a critical point $t=t^*$,
\[
 \partial_i\partial_j|_{t^*}f
 = h_i^\top A h_j + h^\top A h_{ij}
 = h_i^\top A h_j + \lambda h^\top h_{ij}
 = h_i^\top A h_j - (h^\top A h) h_i^\top h_j.
\]
Because
\[
 (h_i^\top h_j)
 = T^\top \begin{pmatrix} H^\top \\ H^\top J^\top \end{pmatrix} (H,J H) T + e e^\top
 = T^\top T + e e^\top
\]
and
\[
 (h_i^\top A h_j)
 = T^\top \begin{pmatrix} H^\top \\ H^\top J^\top \end{pmatrix} A (H,J H) T + e e^\top (h^\top A h),
\]
the Hesse matrix $(-\partial_i\partial_j f)$ at a critical point is
\[
 T^\top \Biggl((h^\top A h)I_{2(n-1)}-\begin{pmatrix} H^\top \\ H^\top J^\top \end{pmatrix}A(H,J H)\Biggr) T.
\]

Therefore, by Morse's theorem,
\begin{align*}
 \chi(M_x)
 &= \sum_{\mbox{\footnotesize critical point}} \1(f(U)\ge x)\,\sgn\det\bigl( -\partial_i\partial_j f \bigr) \\
 &= \sum_{h\,:\,\mbox{\footnotesize eigenvector}} \1(h^\top A h \ge x)\,\sgn\det\Biggl((h^\top A h)I_{2(n-1)}-\begin{pmatrix} H^\top \\ H^\top J^\top \end{pmatrix}A(H,J H)\Biggr) \\
 &= \sum_{k=1}^n \1(\lambda_k\ge x)\,\sgn\det\Biggl(\lambda_k I_{2(n-1)}-\begin{pmatrix} \Lambda_{(k)} & 0 \\ 0 & \Lambda_{(k)}\end{pmatrix}\Biggr), \qquad \Lambda_{(k)}= \diag(\lambda_i)_{1\le i\le n, i\ne k} \\
 &= \sum_{k=1}^n \1(\lambda_k\ge x)\sgn\prod_{i\ne k}(\lambda_k-\lambda_i)^2 \\
 &= \sum_{k=1}^n \1(\lambda_k\ge x),
\end{align*}
where $\lambda_k$ is the $k$th smallest eigenvalue of $A=B+\ii C$.
\end{proof}

\begin{proof}[Proof of Lemmas \ref{lem:ec-sym} and \ref{lem:ec-herm}]
We first prove Lemma \ref{lem:ec-sym}.
Using the joint density function $\q{n}{1}(\lambda_1,\ldots,\lambda_n)$ of the ordered eigenvalues in (\ref{pdf-eigen}),
\begin{align*}
& \frac{\dd}{\dd x} \Pr(\lambda_k >x) \\
&= \frac{\dd}{\dd x}\int_{x}^\infty \dd \lambda_k \int_{\substack{\lambda_1<\cdots<\lambda_{k-1}<\lambda_k, \\ \lambda_k<\lambda_{k+1}<\cdots<\lambda_n}} \q{n}{1}(\lambda_1,\ldots,\lambda_n) \dd \lambda_1\cdots \dd \lambda_{k-1} \dd \lambda_{k+1}\cdots \dd \lambda_n \\
&= -\int_{\substack{\lambda_1<\cdots<\lambda_{k-1}<x, \\ x<\lambda_{k+1}<\cdots<\lambda_n}} \q{n}{1}(\lambda_1,\ldots,\lambda_{k-1},x,\lambda_{k+1},\ldots,\lambda_{n}) \dd \lambda_1\cdots \dd \lambda_{k-1} \dd \lambda_{k+1}\cdots \dd \lambda_n \\
&= -\frac{\d{n}{1}}{\d{n-1}{1}}\int_{\lambda_1<\cdots<\lambda_n} w(x) \1_{\{\lambda_{k-1}<x<\lambda_k\}} (-1)^{n-k}\prod_{k=1}^{n-1}(x-\lambda_k) \\
&\qquad\times
 \q{n-1}{1}(\lambda_1,\ldots,\lambda_{n-1}) \dd \lambda_1\cdots \dd \lambda_{n-1}.
\end{align*}
Noting that $\sum_{k=1}^n \1_{\{\lambda_{k-1}<x<\lambda_k\}} =1$ a.e., we have
\begin{align*}
& \sum_{k=1}^n (-1)^{n-k} \frac{\dd}{\dd x} \Pr(\lambda_k>x) \\
&=  -\frac{\d{n}{1}}{\d{n-1}{1}}\int_{\lambda_1<\cdots<\lambda_n} w(x) \prod_{k=1}^{n-1}(x-\lambda_k) \q{n-1}{1}(\lambda_1,\ldots,\lambda_{n-1}) \dd \lambda_1\cdots \dd \lambda_{n-1} \\
&= -\frac{\d{n}{1}}{\d{n-1}{1}}\E[\det(x I_{n-1}-B)] w(x),
\end{align*}
which is the differentiation of $\E[\chi(M_x)]$ in (\ref{EchiMx}).
Combined with the boundary condition $\lim_{x\to\infty}\E[\chi(M_x)]=0$, we complete the proof.

The proof for Lemma \ref{lem:ec-herm} is almost the same except for the factor $(-1)^{n-k}$.
\end{proof}

\subsection{An integral formula for eigenvalue densities: Hermitian case}
\label{subsec:integral-herm}

The basic integration tool for managing the squared linkage factor $\prod_{i<j}(\lambda_j-\lambda_i)^2$ is as follows.
\begin{proposition}[e.g., {\cite[Section 5.7-5.8]{mehta:2004}}]
Let $\varphi_{i-1}(\lambda)$ be an arbitrary monic polynomial of degree $i-1$.
Let $g(\lambda)$ be an arbitrary function.
Then,
\[
 \int_{\lambda_1<\cdots<\lambda_n}
 \prod_{i=1}^n g(\lambda_i) \prod_{1\le i<j\le n} (\lambda_j-\lambda_i)^2 \prod_{i=1}^n \dd \lambda_i = \det(M),
\]
where
\[
 M = \biggl(\int \varphi_{i-1}(x)\varphi_{j-1}(x) g(x) \dd x\biggr)_{1\le i,j\le n}.
\]
\end{proposition}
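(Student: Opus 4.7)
The plan is to reduce the integral to a single application of the Andr\'eief (also called de Bruijn) identity, exploiting the fact that the sequence of monic polynomials $\{\varphi_{i-1}\}_{i=1}^n$ differs from the monomial sequence $\{\lambda^{i-1}\}_{i=1}^n$ by an invertible upper-triangular change of basis with ones on the diagonal.

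First I would rewrite the Vandermonde product as a determinant,
\[
 \prod_{1\le i<j\le n}(\lambda_j-\lambda_i) = \det\bigl(\lambda_i^{j-1}\bigr)_{1\le i,j\le n}.
\]
Because each $\varphi_{j-1}$ is monic of degree $j-1$, the matrix $\bigl(\varphi_{j-1}(\lambda_i)\bigr)_{i,j}$ is obtained from $\bigl(\lambda_i^{j-1}\bigr)_{i,j}$ by right-multiplication by an $n\times n$ unit upper-triangular matrix, so the two determinants coincide. Squaring yields
\[
 \prod_{1\le i<j\le n}(\lambda_j-\lambda_i)^2 = \det\bigl(\varphi_{j-1}(\lambda_i)\bigr)^2.
\]

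Next, since the integrand is symmetric under permutations of $(\lambda_1,\ldots,\lambda_n)$, the integral over the chamber $\lambda_1<\cdots<\lambda_n$ equals $1/n!$ times the corresponding integral over all of $\R^n$. At this point I would invoke the Andr\'eief identity: for any two families of functions $\{u_i\}_{i=1}^n$, $\{v_i\}_{i=1}^n$ and any function $h$,
\[
 \frac{1}{n!}\int_{\R^n} \det\bigl(u_i(\lambda_j)\bigr) \det\bigl(v_i(\lambda_j)\bigr) \prod_{i=1}^n h(\lambda_i)\dd\lambda_i = \det\biggl(\int u_i(x)v_j(x) h(x)\dd x\biggr).
\]
Applying this with $u_i=v_i=\varphi_{i-1}$ and $h=g$ produces the claimed $\det(M)$. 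The Andr\'eief identity itself is standard: expand each determinant as a signed sum over permutations, factorize the $n$-fold integral across the $\lambda_i$'s, relabel integration variables to absorb one of the permutations, and recollect the remaining signed sum as a single determinant.

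There is essentially no obstacle here: the entire content lies in the observation that the Vandermonde factor is invariant under unit-triangular column operations, so the integral is completely insensitive to which monic basis of prescribed degrees one chooses for $\varphi_{i-1}$. This freedom is precisely what allows a strategic choice (namely, the orthogonal polynomials with respect to $w$) to diagonalize $M$ and thereby deliver the closed-form normalizing constant $\d{n}{2}$ asserted in Lemma \ref{lem:norm-herm}.
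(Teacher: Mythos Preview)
Your argument is correct and is essentially the same as the paper's own one-line justification: the paper states that the proposition ``follows from the Binet--Cauchy formula for a product of two rectangular matrices with the Vandermonde determinant,'' and the Andr\'eief identity you invoke is exactly the integral version of Cauchy--Binet. Your expanded account (unit upper-triangular change of basis, passage from the ordered chamber to $\R^n$, then Andr\'eief) spells out in detail what the paper takes as standard.
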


This proposition follows from the Binet-Cauchy formula for a product of two rectangular matrices with the Vandermonde determinant (\ref{vandermonde}).

\begin{proof}[Proof of Lemma \ref{lem:norm-herm}]

We apply Proposition \ref{prop:pf-int} with $g(\lambda)=w(\lambda)$ and $\varphi_i(\lambda)$ as the monic orthogonal polynomial with respect to the weight function $w(\lambda)$.
Then, $M = \diag(h_0,\ldots,h_{n-1})$ and
\[
 1/\d{n}{2} = \det(M) = \prod_{i=0}^{n-1}h_k.
\]
\end{proof}

\subsection{Integral formula for eigenvalue densities: Real symmetric case}
\label{subsec:integral-sym}

Suppose that $A$ is an $n\times n$ real symmetric random matrix with probability density $p_n$ in (\ref{pdf}).
The joint density of the eigenvalues of $A$ is
$\q{n}{1}(\lambda_1,\ldots,\lambda_n)\1_{\{\lambda_1<\cdots<\lambda_n\}}$, where
$\q{n}{1}(\lambda_1,\ldots,\lambda_n)$ is given in (\ref{pdf-eigen}).
The factor $\prod_{i<j}(\lambda_j-\lambda_i)$ included in $\q{n}{1}(\lambda_1,\ldots,\lambda_n)$ is called the linkage factor.
The basic integration tool used to manage the linkage factor is as follows.
The set of $n\times n$ real skew-symmetric matrices is denoted by $\Skew(n)$.
\begin{proposition}[e.g., {\cite[Theorem 6.1]{baik-rains:2001},\cite[Section 5.5]{mehta:2004}}]
\label{prop:pf-int}
Let $\varphi_{i-1}(\lambda)$ be an arbitrary monic polynomial of degree $i-1$.
Let $g(\lambda)$ be an arbitrary function.
Define an $n\times n$ skew-symmetric matrix
\[
 S = \biggl(\biggl(\int_{x<y}-\int_{x>y}\biggr) \varphi_{i-1}(x)\varphi_{j-1}(y) g(x) g(y) \dd x \dd y \biggr)_{1\le i,j\le n}\in \Skew(n),
\]
and an $n\times 1$ vector
\[
 \bm{s} = \biggl(\int \varphi_{i-1}(y) g(y) \dd y \biggr)_{1\le i\le n}\in\R^{n\times 1}.
\]
Then,
\[
 \int_{\lambda_1<\cdots<\lambda_n}
 \prod_{i=1}^n g(\lambda_i) \prod_{1\le i<j\le n} (\lambda_j-\lambda_i) \prod_{i=1}^n \dd \lambda_i
 = \begin{cases}
 \pf(S) & (n:\mbox{even}), \\
 \pf\begin{pmatrix} S & \bm{s} \\ -\bm{s}^\top & 0 \end{pmatrix}
        & (n:\mbox{odd}),
  \end{cases}
\]
where $\pf(\cdot)$ is the Pfaffian of a skew-symmetric matrix.
\end{proposition}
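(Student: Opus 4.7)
The plan is to reduce the claim to de Bruijn's classical Pfaffian integration formula, via a preparatory change of basis that turns the Vandermonde factor into a determinant in the given monic polynomials.

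First, I would eliminate the Vandermonde factor in favor of the $\varphi_{i-1}$'s. The standard identity $\prod_{1\le i<j\le n}(\lambda_j-\lambda_i)=\det(\lambda_j^{i-1})_{1\le i,j\le n}$ combined with the fact that each $\varphi_{i-1}$ is a monic polynomial of degree $i-1$ (so that the change of basis from $\{1,\lambda,\ldots,\lambda^{n-1}\}$ to $\{\varphi_0,\ldots,\varphi_{n-1}\}$ is realized by a unit upper-triangular matrix of determinant one) yields
\[
 \prod_{1\le i<j\le n}(\lambda_j-\lambda_i) = \det\bigl(\varphi_{i-1}(\lambda_j)\bigr)_{1\le i,j\le n}.
\]
Absorbing the weight into the columns by setting $\phi_i(x):=\varphi_{i-1}(x) g(x)$, the target integral becomes
\[
 I_n := \int_{\lambda_1<\cdots<\lambda_n}\det\bigl(\phi_i(\lambda_j)\bigr)_{1\le i,j\le n}\prod_{i=1}^n \dd\lambda_i,
\]
which is precisely the left-hand side of de Bruijn's formula.

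Next I would invoke de Bruijn's identity. For even $n=2m$, expand the determinant by Leibniz and regroup the $n!$ signed permutations according to the perfect matching induced by pairing consecutive coordinates $(\lambda_{2k-1},\lambda_{2k})$. Each pair, antisymmetrized under the constraint $\lambda_{2k-1}<\lambda_{2k}$ inherited from the ordered simplex, contributes the entry $\iint_{x<y}[\phi_i(x)\phi_j(y)-\phi_i(y)\phi_j(x)]\,\dd x\,\dd y$, and summation over perfect matchings with the Pfaffian sign convention collapses the whole expression to the Pfaffian of the resulting skew-symmetric matrix. Substituting back $\phi_i=\varphi_{i-1}g$ and rewriting this entry as $\bigl(\int_{x<y}-\int_{x>y}\bigr)\varphi_{i-1}(x)\varphi_{j-1}(y) g(x) g(y)\,\dd x\,\dd y$ (by swapping $x\leftrightarrow y$ in the second summand) identifies it with $S_{ij}$, so $I_n=\pf(S)$. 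For odd $n=2m+1$, apply the same argument after adjoining an auxiliary polynomial $\phi_{n+1}\equiv 1$ together with a dummy unconstrained coordinate to reduce to the even case; the unpaired coordinate produces a border row and column whose entries are $\int\phi_i(x)\,\dd x=\bm{s}_i$ and whose corner entry is $0$, giving the bordered Pfaffian in the statement.

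The main technical step is the pairing/matching argument that underlies de Bruijn's identity: one has to track signs carefully when turning the ordered-simplex integral of an antisymmetric determinant into a Pfaffian summed over perfect matchings, and one has to verify that the odd case reduces cleanly to the even case through the unit-augmentation trick. This combinatorial identity is classical (it is essentially the statement that antisymmetrizing a product of pairwise factors reproduces the Pfaffian), and a self-contained derivation is given in the references cited in the statement; the only work specific to the present proposition is the basis-change step above and the translation between de Bruijn's entry and the matrix $S$ (respectively the vector $\bm{s}$) written down here.
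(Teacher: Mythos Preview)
Your approach is essentially the same as the paper's: the paper also first rewrites the Vandermonde factor as $\det(\varphi_{i-1}(\lambda_j))$ via the unit upper-triangular change of basis, and then invokes the Pfaffian analogue of the Binet--Cauchy formula (citing Ishikawa--Wakayama), which is precisely de~Bruijn's identity in another guise. The only cosmetic difference is the name attached to the Pfaffian integration step.
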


This proposition follows from the Pfaffian analogue of the Binet-Cauchy formula (\cite{ishikawa-wakayama:1995}) and the fact that the linkage factor is the Vandermonde determinant:
\begin{equation}
\label{vandermonde}
 \prod_{1\le i<j\le n} (\lambda_j-\lambda_i)
 = \det(\lambda_j^{i-1})_{1\le i,j\le n} = \det(\varphi_{i-1}(\lambda_j))_{1\le i,j\le n}.
\end{equation}

Recall that the Pfaffian for an $n\times n$ skew-symmetric matrix $S=(s_{ij})_{1\le i,j\le n}$ is defined by the following:
\[
 \pf(S) = \begin{cases}
 \displaystyle
 \sum_{\pi\in\Pi_n}\sgn(\pi)s_{\pi(1),\pi(2)}\cdots s_{\pi(n-1),\pi(n)} & (n:\mbox{even}), \\
 0 & (n:\mbox{odd}),
\end{cases}
\]
where $\Pi_n$ is the set of permutations $\pi$ of $\{1,\ldots,n\}$ such that
\[
 \pi(1)<\pi(2), \ldots, \pi(n-1)<\pi(n), \ \pi(1)<\pi(3)<\cdots<\pi(n-1),
\]
i.e., the set of all pairings of $\{1,\ldots,n\}$.
For example,
\[
 \{(\pi(1),\pi(2),\pi(3),\pi(4)) \mid \pi \in \Pi_4\} = \{(1,2,3,4), (1,3,2,4), (1,4,2,3) \}.
\]
For a skew-symmetric matrix $S$ and a matrix $U$, the identities
\[
 \pf(S)^2 = \det(S), \qquad \pf(U^\top S U) = \pf(S)\det(U)
\]
hold.
The latter assures that a Gaussian elimination (simultaneously from the left and the right) keeps the Pfaffian invariant.
That is, for a skew-symmetric block matrix $(S_{ij})_{1\le i,j\le 2}$ with $S_{11}$ non-singuler,
\begin{equation}
\label{elimination}
\begin{aligned}
\pf\left(\begin{pmatrix} I & 0 \\ -S_{21}S_{11}^{-1} & I \end{pmatrix}
         \begin{pmatrix} S_{11} & S_{12} \\ S_{21} & S_{22} \end{pmatrix}
         \begin{pmatrix} I & -S_{11}^{-1}S_{12} \\ 0 & I \end{pmatrix}\right)
 =& \pf\begin{pmatrix} S_{11} & 0 \\ 0 & S_{22\cdot 1} \end{pmatrix} \\
 =& \pf(S_{11})\pf(S_{22\cdot 1}),
\end{aligned}
\end{equation}
where $S_{22\cdot 1}=S_{22}-S_{21}S_{11}^{-1}S_{12}$.

\subsection{Proofs of Lemmas \ref{lem:USU}, \ref{lem:norm-sym}, and \ref{lem:Edet}}

\begin{proof}[Proof of Lemma \ref{lem:USU}]

Recall first that
\[
 \pf(S_n) = \int\cdots\int_{x_1<\cdots<x_n} \prod_{1\le i<j\le n}(x_j-x_i)\prod_{i=1}^n w(x_i)\prod_{i=1}^n \dd x_i >0, \quad n=2,4,\ldots,
\]
meaning that the upper-left $i\times i$ minor of $S_n$ is non-singular for $i=2,4,\ldots$
Then, by applying $2\times 2$ block-wise Gaussian elimination to $S_n$, we find an $n\times n$ upper triangle matrix $\widetilde U_n$ of the form of
\begin{equation}
\label{upper_triangle}
\widetilde U_n=
\begin{pmatrix}
 I_2 & *   & \cdots & * \\
     & I_2 & \ddots & \vdots \\
     &     & \ddots & * \\
 0   &     &        & I_2
\end{pmatrix} \ \ (n:\mbox{even}), \quad
\widetilde U_n=
\begin{pmatrix}
 I_2 & *   & \cdots & * & * \\
     & I_2 & \ddots & \vdots & \vdots \\
     &     & \ddots & * & * \\
     &     &        & I_2 & * \\
 0   &     &        &     & 1
\end{pmatrix} \ \ (n:\mbox{odd})
\end{equation}
such that
\[
 \widetilde U_n^\top S_n \widetilde U_n = \begin{cases}
 \diag\bigl(\sigma_0 J,\ldots,\sigma_{n/2-1}J\bigr) & (n:\mbox{even}), \\
 \diag\bigl(\sigma_0 J,\ldots,\sigma_{(n-1)/2-1}J,0\bigr) & (n:\mbox{odd}),
 \end{cases}
\qquad J=\JJ.
\]
Here, $\sigma_k$'s are positive because $\pf(S_{2k})>0$ for all $k$.
For example, for
\[
 S_5 = \begin{pmatrix}
 S_{11} & S_{12} & \bm{s}_{13} \\
 S_{21} & S_{22} & \bm{s}_{23} \\
 \bm{s}_{31} & \bm{s}_{32} & 0
 \end{pmatrix} \in \Skew(5)
\]
with $S_{11},S_{22}\in\Skew(2)$, $S_{12}=-S_{21}^\top\in\R^{2\times 2}$, $\bm{s}_{13}=-\bm{s}_{31}^\top$, $\bm{s}_{23}=-\bm{s}_{32}^\top\in\R^{2\times 1}$,
\[
 \widetilde U_5 = \begin{pmatrix}
 I_2 & -S_{11}^{-1}S_{12} & -S_{11}^{-1}\bm{s}_{13} \\
 0   &  I_2               & -S_{22\cdot 1}^{-1}\bm{s}_{23\cdot 1} \\
 0   &  0                 & 1
 \end{pmatrix}
\]
where $S_{22\cdot 1}=S_{22}-S_{21}S_{11}^{-1}S_{12}$ and $\bm{s}_{23\cdot 1}=\bm{s}_{23}-\bm{s}_{21}S_{11}^{-1}\bm{s}_{13}$,
and
\[
  \widetilde U_5^\top S_5 \widetilde U_5
 = \begin{pmatrix}
 S_{11} & 0             & 0 \\
 0      & S_{22\cdot 1} & 0 \\
 0      & 0             & 0
 \end{pmatrix}
 = \begin{pmatrix}
 \sigma_0 J & 0          & 0 \\
 0          & \sigma_1 J & 0 \\
 0          & 0          & 0
 \end{pmatrix}.
\]

Let
\begin{equation}
\label{tildephi}
 (\widetilde\varphi_0(x),\widetilde\varphi_1(x),\ldots,\widetilde\varphi_{n-1}(x)) = (1,x,\ldots,x^{n-1}) \widetilde U_n
\end{equation}
and
\[
 (\widetilde\gamma_0,\ldots,\widetilde\gamma_{n-1}) = (\mu_0,\ldots,\mu_{n-1}) \widetilde U_n.
\]
We will prove later that
\begin{equation}
\label{gamma_i>0}
 \widetilde\gamma_i = \int \widetilde\varphi_i(x) w(x) \dd x > 0 \quad \mbox{for $i$ even}.
\end{equation}
If this is true,
\begin{equation}
\label{unique_UT}
 U_n = \begin{cases}
 \widetilde U_n \diag\Biggl(\begin{pmatrix} 1 & -\widetilde\gamma_{1}/\widetilde\gamma_{0} \\ 0 & 1 \end{pmatrix},\ldots, \begin{pmatrix} 1 & -\widetilde\gamma_{n-1}/\widetilde\gamma_{n-2} \\ 0 & 1 \end{pmatrix}\Biggr) & (n:\mbox{even}), \\[4mm]
 \widetilde U_n \diag\Biggl(\begin{pmatrix} 1 & -\widetilde\gamma_{1}/\widetilde\gamma_{0} \\ 0 & 1 \end{pmatrix},\ldots, \begin{pmatrix} 1 & -\widetilde\gamma_{n-2}/\widetilde\gamma_{n-3} \\ 0 & 1 \end{pmatrix},1\Biggr) & (n:\mbox{odd})
\end{cases}
\end{equation}
is an $n\times n$ upper triangle matrix with unit diagonal elements satisfying (\ref{cond1'}) and (\ref{cond2'}).

The uniqueness of $U_n$ is proven as follows:
Monic polynomials $\varphi_i(x)$'s satisfy the skew-orthogonality (\ref{cond1'}) if and only if $U_n$ defining $\varphi_i(x)$'s by (\ref{varphi}) satisfies
\begin{equation}
\label{USU}
 U_n^\top S_n U_n = \begin{cases}
 \diag\bigl(\sigma_0 J,\ldots,\sigma_{n/2-1}J\bigr) & (n:\mbox{even}), \\
 \diag\bigl(\sigma_0 J,\ldots,\sigma_{(n-1)/2-1}J,0\bigr) & (n:\mbox{odd}),
 \end{cases}
\qquad J=\JJ.
\end{equation}
Let
\[
 \mathrm{UT}(n) =
 \begin{cases}
 \bigl\{\diag\bigl(T(t_1),\ldots,T(t_{n/2})\bigr) \mid t_i\in \R \bigr\} & (n:\mbox{even}), \\
 \bigl\{\diag\bigl(T(t_1),\ldots,T(t_{(n-1)/2}),1\bigr) \mid t_i\in \R \bigr\} & (n:\mbox{odd}),
 \end{cases} \qquad
 T(t)=\begin{pmatrix} 1 & t \\ 0 & 1 \end{pmatrix}
\]
be a subgroup of the upper triangular matrix group.
If a matrix $U_n^0$ satisfies (\ref{USU}), then the matrices in the orbit $\{U_n^0 D \mid D\in \mathrm{UT}(n)\}$ satisfy (\ref{USU}).
This orbit contains a matrix $\widetilde U_n$ of the form (\ref{upper_triangle}), which is uniquely determined by the Gaussian elimination procedure.
Therefore, any matrices $U_n$ satisfying (\ref{USU}) is expressed as $\widetilde U_n D$, $D\in\mathrm{UT}(n)$.
By posing the condition (\ref{cond2'}), the matrix $D$ is determined uniquely as in (\ref{unique_UT}).

Finally, we prove (\ref{gamma_i>0}).
Suppose that $n$ is even.
We will prove that for a random matrix $A_n\in\Sym(n)$ with density $p_n$ in (\ref{pdf}),
\begin{equation}
\label{tildephi-Edet}
 \widetilde\varphi_n(x) = \E[\det(x I_n -A_n)] \quad \mbox{when $n$ is even}.
\end{equation}
If (\ref{tildephi-Edet}) holds, then (\ref{gamma_i>0}) follows from Lemma \ref{lem:even-odd}.

Let
\[
 \widetilde\Phi_n(x) = (\widetilde\varphi_0(x),\ldots,\widetilde\varphi_{n-1}(x))^\top
\]
be the $n\times 1$ column vector consisting of the monic system in (\ref{tildephi}),
and let $L_n$ be an $n\times n$ matrix defined by (\ref{3-term-matrix}), that is,
\[
 x \widetilde\Phi_{n}(x) = \widetilde L_n \widetilde\Phi_{n}(x) + \bm{e}_n \widetilde\varphi_n(x), \quad \bm{e}_n=(0,\ldots,0,1)^\top.
\]
Note that $\widetilde\varphi_n(x)=\det(x I_n-\widetilde L_n)$ by Lemma \ref{lem:monic}.

We use the density function $\q{n}{1}$ in (\ref{pdf-eigen}).
By means of the integral formula in Proposition \ref{prop:pf-int}, we obtain
$\E[\det(z I_n-A_n)]$ as a multiple of
\begin{equation}
\label{pf-even}
\begin{aligned}
& \int_{x_1<\cdots<x_n} \prod_{i=1}^n (z-x_i)w(x_i) \prod_{i<j} (x_j-x_i) \prod_{i=1}^n \dd x_i \\
&= \int_{x_1<\cdots<x_n} \prod_{i=1}^n (z-x_i)w(x_i) \prod_{i<j} \bigl(\widetilde\varphi_j(x)-\widetilde\varphi_i(x)\bigr) \prod_{i=1}^n \dd x_i \\
&= \pf\biggl( \biggl(\int_{x<y}-\int_{x>y}\biggr) (z-x)(z-y)\widetilde\varphi_i(x)\widetilde\varphi_j(y) w(x) w(y) \dd x \dd y \biggr)_{0\le i,j\le n-1} \\
&= \pf\biggl( \biggl(\int_{x<y}-\int_{x>y}\biggr) (z-x)(z-y) \widetilde\Phi_n(x) \widetilde\Phi_n(y)^\top w(x) w(y) \dd x \dd y \biggr)_{n\times n} \\
&= \pf\bigl((I_n z-\widetilde L_n) G (I_n z-\widetilde L_n)^\top - (I_n z-\widetilde L_n) \bm{g} \bm{e}_n^\top + \bm{e}_n \bm{g}^\top (I_n z-\widetilde L_n)^\top\bigr),
\end{aligned}
\end{equation}
where
\begin{align*}
 G =& \biggl(\int_{x<y}-\int_{x>y}\biggr) \widetilde\Phi_{n}(x) \widetilde\Phi_{n}(y)^\top w(x)w(y) \dd x \dd y \\
 =& \diag(\sigma_0 J,\ldots,\sigma_{n/2-1}J), \quad J=\JJ,
\end{align*}
and
\[
 \bm{g} = \biggl(\int_{x<y}-\int_{x>y}\biggr) \widetilde\Phi_{n}(x) \widetilde\varphi_{n}(y) w(x) w(y) \dd x \dd y = 0
\]
because $\widetilde\varphi_i(x)$ satisfies the skew-orthogonality (\ref{cond1}).
Hence, (\ref{pf-even}) is $\det(I_n z-\widetilde L_n) \pf(G) = \widetilde\varphi_n(z) \pf(G)$,
and by checking the coefficient of $z^n$, (\ref{tildephi-Edet}) follows.
\end{proof}

\begin{proof}[Proof of Lemma \ref{lem:norm-sym}]

We apply Proposition \ref{prop:pf-int} with $g(\lambda)=w(\lambda)$ and $\varphi_i(\lambda)$ as the monic skew-orthogonal polynomial with respect to the weight function $w(\lambda)$.
Let $U_n$ be the matrix that defined in Lemma \ref{lem:USU}.
When $n$ is even,
\[
\begin{aligned}
  1/\d{n}{1} = \pf(S) = \pf(U_n^\top S U_n) =& \pf\left(
\diag\Biggl(
 \begin{pmatrix} 0 & \sigma_{0} \\ -\sigma_{0} & 0 \end{pmatrix},\ldots,
 \begin{pmatrix} 0 & \sigma_{n/2-1} \\ -\sigma_{n/2-1} & 0 \end{pmatrix}\right)\Biggr) \\
 =& \prod_{k=0}^{n/2-1}\sigma_k.
\end{aligned}
\]
When $n$ is odd,
\begin{align*}
 1/\d{n}{1} = \pf\begin{pmatrix} S & \bm{s} \\ -\bm{s}^\top & 0 \end{pmatrix}
=& \pf\left(\begin{pmatrix}U_n & 0 \\ 0 & 1\end{pmatrix}^\top\begin{pmatrix} S & \bm{s} \\ -\bm{s}^\top & 0 \end{pmatrix} \begin{pmatrix}U_n & 0 \\ 0 & 1\end{pmatrix}\right) \\[1mm]
=&
\pf\left(\begin{array}{cccccccc|c}
\cline{1-2}
\multicolumn{1}{|c}{} & \multicolumn{1}{c|}{\sigma_0} & & & & & & & \gamma_0 \\
\multicolumn{1}{|c}{-\sigma_0} & \multicolumn{1}{c|}{} & & & & & & & 0 \\ \cline{1-4}
 & & \multicolumn{1}{|c}{} & \multicolumn{1}{c|}{\sigma_1} & & & & & \gamma_2 \\
 & & \multicolumn{1}{|c}{-\sigma_1} & \multicolumn{1}{c|}{} & & & & & 0 \\ \cline{3-4}
 & & & & \ddots & & & & \vdots \\ \cline{6-7}
 & & & & & \multicolumn{1}{|c}{} & \multicolumn{1}{c|}{\sigma_{\frac{n-3}{2}}} & & \gamma_{n-3} \\
 & & & & & \multicolumn{1}{|c}{-\sigma_{\frac{n-3}{2}}} & \multicolumn{1}{c|}{} & & 0 \\ \cline{6-7}
 & & & & & & & 0 & \gamma_{n-1} \\ \hline
 -\gamma_0 & 0 & -\gamma_2 & 0 & \cdots & -\gamma_{n-3} & 0 & -\gamma_{n-1} & 0
\end{array}\right) \\
=& \prod_{k=0}^{(n-3)/2}\sigma_k \times \gamma_{n-1}.
\end{align*}
\end{proof}

\begin{proof}[Proof of Lemma \ref{lem:Edet}]

Let $\widetilde\varphi_i$ and $\widetilde\gamma_i$ be defined in the proof of Lemma \ref{lem:USU}.
Note first that for $i$ even,
\begin{equation}
\label{even_i}
 (\gamma_i,0) =
 (\widetilde\gamma_i,\widetilde\gamma_{i+1})
 \begin{pmatrix} 1 & -\widetilde\gamma_{i+1}/\widetilde\gamma_i \\ 0 & 1 \end{pmatrix},
\quad
 (\varphi_i(x),\varphi_{i+1}(x)) =
 (\widetilde\varphi_i(x),\widetilde\varphi_{i+1}(x))
 \begin{pmatrix} 1 & -\widetilde\gamma_{i+1}/\widetilde\gamma_i \\ 0 & 1 \end{pmatrix},
\end{equation}
hence, $\gamma_i=\widetilde\gamma_i$ and $\varphi_i(x)=\widetilde\varphi_i(x)$ for $i$ even.

Suppose that $n$ is even.
Then,
\begin{align*}
 \E[\det(z I_n-A_n)]
 =& \widetilde\varphi_n(z) \quad (\mbox{proof of Lemma \ref{lem:USU}}) \\
 =& \varphi_n(z) \quad (\mbox{eq.\ (\ref{even_i})}) \\
 =& \widehat\varphi_n(z) \quad (\mbox{eq.\ (\ref{hatvarphi})}).
\end{align*}

In the following, suppose that $n$ is odd, and prove that $\E[\det(z I_n-A_n)] = \widehat\varphi_n(z)$.
The transformation from $\Phi_n(x)=(\varphi_0(x),\ldots,\varphi_{n-1}(x))^\top$ to $\widehat\Phi_n(x)=(\widehat\varphi_0(x),\ldots,\widehat\varphi_{n-1}(x))^\top$ is
$\widehat\Phi_n(x) = B_n^\top\Phi_n(x)$, where
\[
B_n=
\begin{pmatrix}
 1 & 0 & 0 & 0 & & 0 & 0 & 0 & 0 \\
   & 1 & 0 & \gamma_3/\gamma_1 & & \gamma_{n-4}/\gamma_1 & 0 & \gamma_{n-2}/\gamma_1 & 0 \\
   &   & 1 & 0 &        & 0      & 0      & 0      & 0 \\
   &   &   & 1 & & \gamma_{n-4}/\gamma_3 & 0 & \gamma_{n-2}/\gamma_3 & 0 \\
   &   &   &   & \ddots & \vdots & \vdots & \vdots & \vdots \\
   &   &   &   &        & 1      & 0      & \gamma_{n-2}/\gamma_{n-4} & 0 \\
   &   &   &   &        &        & 1      & 0      & 0 \\
   &   &   &   &        &        &        & 1      & 0 \\
   &   &   &   &        &        &        &        & 1
\end{pmatrix}_{n\times n}.
\]
Then, we have
\begin{equation}
\label{intint}
\begin{aligned}
 \biggl(\int_{x<y}-\int_{x>y}\biggr) & \widehat\varphi_i(x) \widehat\varphi_j(y) w(x) w(y) \dd x \dd y \\
 &= \left(B_n^\top \diag\Biggl(\sigma_0 \JJ,\ldots,\sigma_{(n-1)/2-1}\JJ,0 \Biggr) B_n\right)_{ij} \\
 &= \begin{cases}
 \gamma_i \gamma_j  & (i<j,\,i:\mbox{even},\,j:\mbox{odd}), \\
 -\gamma_i \gamma_j & (i>j,\,i:\mbox{odd},\,j:\mbox{even}), \\
 0                  & (\mbox{otherwise}),
 \end{cases}
\end{aligned}
\end{equation}
and
\begin{equation}
\label{int}
\begin{aligned}
 \int \widehat\varphi_i(x) w(x) \dd x
 &= \bigl((\gamma_0,0,\gamma_2,0,\ldots,0,\gamma_{n-1}) B_n\bigr)_i \\
 &= \begin{cases}
 \gamma_i & (i:\mbox{even}), \\
 0        & (i:\mbox{odd}).
 \end{cases}
\end{aligned}
\end{equation}
Note that $\widehat\varphi_i(x)$ and $\gamma_i$ are defined independently of $n$.
Hence, (\ref{intint}) and (\ref{int}) hold for any $i,j$.

Define a matrix $\widehat L_n$ by
\[
 x\widehat\Phi_n(x) = \widehat L_n \widehat\Phi_n(x) + \bm{e}_n \widehat\varphi_n(x), \quad \bm{e}_n=(0,\ldots,0,1)^\top
\]
as before.
$\E[\det(z I_n-A_n)]$ is proportional to
\begin{equation}
\label{pf-odd}
\begin{aligned}
& \int_{x_1<\cdots<x_n} \prod_{i=1}^n (z-x_i) w(x_i) \prod_{i<j} (x_j-x_i) \prod_{i=1}^n \dd x_i \\
&= \pf\left(\begin{matrix}
 \Bigl(\bigl(\int_{x<y}-\int_{x>y}\bigr) (z-x)(z-y) \varphi_i(x) \varphi_j(y) w(x) w(y) \dd x \dd y \Bigr)_{0\le i,j\le n-1} & * \\
 -\Bigl(\int (z-y) \varphi_j(y) w(y) \dd y \Bigr)_{0\le j\le n-1} & 0
 \end{matrix}\right) \\
&=
 \pf\left(\begin{matrix}
 \Bigl(\bigl(\int_{x<y}-\int_{x>y}\bigr) (z-x)(z-y) \widehat\Phi_{n}(x) \widehat\Phi_{n}(y)^\top w(x) w(y) \dd x \dd y \Bigr)_{n\times n} &
* \\
 -\Bigl(\int (z-y) \widehat\Phi_{n}(y)^\top w(y) \dd y \Bigr)_{1\times n} & 0
 \end{matrix}
\right).
\end{aligned}
\end{equation}
Here,
\begin{align*}
& \biggl(\int_{x<y}-\int_{x>y}\biggr) (z-x)(z-y) \widehat\Phi_{n}(x) \widehat\Phi_{n}(y)^\top w(x) w(y) \dd x \dd y \\
&= (I_n z-\widehat L_n) \widehat G (I_n z-\widehat L_n)^\top - (I_n z-\widehat L_n) \widehat{\bm{g}} \bm{e}_n^\top + \bm{e}_n \widehat{\bm{g}}^\top (I_n z-\widehat L_n)^\top,
\end{align*}
where
\[
 \widehat G = \biggl(\int_{x<y}-\int_{x>y}\biggr) \widehat\Phi_{n}(x) \widehat\Phi_{n}(y)^\top w(x) w(y) \dd x \dd y
\]
and
\[
 \widehat{\bm{g}} = \biggl(\int_{x<y}-\int_{x>y}\biggr) \widehat\Phi_{n}(x) \widehat\varphi_{n}(y) w(x) w(y) \dd x \dd y = \gamma_n \bm{\gamma}, \quad
 \bm{\gamma} = (\gamma_0,0,\gamma_2,0,\ldots,\gamma_{n-1})^\top,
\]
by (\ref{intint}).
Moreover,
\[
 \int (z-y) \widehat\Phi_{n}(y) w(y) \dd y = (I_n z-\widehat L_n)\int\widehat\Phi_n(y)w(y) \dd y = (I_n z-\widehat L_n) \bm{\gamma}
\]
by (\ref{int}).
Hence, (\ref{pf-odd}) is
\begin{align*}
& \pf\begin{pmatrix}
 (I_n z -\widehat L_n) \widehat G (I_n z -\widehat L_n)^\top - \gamma_n (I_n z -\widehat L_n) \bm{\gamma} \bm{e}_n^\top + \gamma_n \bm{e}_n \bm{\gamma}^\top (I_n z -\widehat L_n)^\top & (I_n z -\widehat L_n) \bm{\gamma} \\
 -\bm{\gamma}^\top (I_n z -\widehat L_n)^\top & 0
 \end{pmatrix} \\
&= \pf\begin{pmatrix}
 (I_n z -\widehat L_n) \widehat G (I_n z -\widehat L_n)^\top & \bm{\gamma} (I_n z -\widehat L_n) \\
 -\bm{\gamma}^\top (I_n z -\widehat L_n)^\top & 0
 \end{pmatrix} \\
&= \pf\Biggl[\begin{pmatrix} I_n z -\widehat L_n & 0 \\ 0 & 1 \end{pmatrix}
 \begin{pmatrix} \widehat G & \bm{\gamma} \\ -\bm{\gamma}^\top & 0 \end{pmatrix}
\begin{pmatrix} I_n z -\widehat L_n & 0 \\ 0 & 1 \end{pmatrix}^\top\Biggr] \\
&= \det(I_n z -\widehat L_n)\pf\begin{pmatrix} \widehat G & \bm{\gamma} \\ -\bm{\gamma} & 0 \end{pmatrix}
= \widehat\varphi_n(z)\pf\begin{pmatrix} \widehat G & \bm{\gamma} \\ -\bm{\gamma} & 0 \end{pmatrix},
\end{align*}
where the Gaussian elimination (\ref{elimination}) is used.
This means $\E[\det(z I_n -A_n)]=\widehat\varphi_n(z)$ for an odd $n$.
The proof is completed.
\end{proof}

\subsection{Proof of Theorem \ref{thm:ec-limit}}

\begin{proof}[Proof of Theorem \ref{thm:ec-limit}]

We state the proof mainly in the Gaussian case.
Let
\begin{equation}
\label{psi}
 \psi_n(x) = \frac{\sigma_n}{\gamma_n} w(x) \bar H_{n}(x)
\end{equation}
and $\psi'_n(x)$ be its derivative.
Proposition \ref{prop:johnstone} claims that
\begin{equation}
\label{psi-limit}
 \psi_n(\mu_n + \sigma_n s) \to \frac{1}{2}\Ai(s), \qquad \sigma_n \psi_n'(\mu_n + \sigma_n s) \to \frac{1}{2}\Ai'(s).
\end{equation}

{\rm (I)} Real symmetric case (GOE).

According to Proposition \ref{prop:johnstone} (i) showing the uniform convergence, the expected Euler characteristic with the threshold $x=\mu_{n-1}+\sigma_{n-1} s$ converges as
\[
 \frac{1}{\gamma_{n-1}}\int_x^\infty \bar w(x) H_{n-1}(x) \dd x
 = \int_s^\infty \psi_{n-1}(\mu_{n-1}+\sigma_{n-1}s) \dd s \to
 \frac{1}{2}\int_s^\infty \Ai(s) \dd s.
\]

{\rm (II)} Hermitian case (GUE).

According to Proposition \ref{prop:johnstone} (i), the expected Euler characteristic with the threshold $x$ is
\begin{equation}
\label{tmp}
 \frac{1}{h_{n-1}}\int_x^\infty w(x)^2 \big(\bar H_{n-1}(x) \bar H_{n}'(x)-\bar H_{n}(x) \bar H_{n-1}'(x) \bigr) \dd x,
\end{equation}
where $w(x)=e^{-x^2/2}$, $h_{n-1}=2^{-n+1}\sqrt{\pi}\Gamma(n)$ (Table \ref{tab:ortho_poly}).
Substituting
$\bar H_n'(x)= n \bar H_{n-1}(x)$ and
$\bar H_n(x)=x\bar H_{n-1}(x)-(1/2)\bar H_{n-1}'(x)$,
(\ref{tmp}) is expressed in terms of $\bar H_{n-1}(x)$ and $\bar H_{n-1}'(x)$.
Then, by substituting
\[
 \bar H_{n-1}(x)=\frac{\psi_{n-1}(x)}{w(x)}, \qquad
 \bar H_{n-1}'(x)=\frac{\psi_{n-1}'(x)-w'(x)\psi_{n-1}(x)/w(x)}{w(x)},
\]
 (\ref{tmp}) is expressed in terms of $\psi_{n-1}(x)$ in (\ref{psi}) and $\psi_{n-1}'(x)$.
Substituting $x=\mu_{n-1}+\sigma_{n-1} s$, and taking the limit using (\ref{psi-limit}), we obtain the result.

{\rm (III) Other cases.}

The proofs for the real symmetric matrices are the same as the Gaussian case.

For the complex Wishart matrix, we use the formulas:
\[
 (\bar L_n^{(\alpha)}{}(x))' = n \bar L_{n-1}^{(\alpha+1)}(x)
\]
and the three-term relation
\[
 \bar L_{n}^{(\alpha)}(x) = (x-\alpha-1)\bar L_{n-1}^{(\alpha+1)}(x) -x \bigl(\bar L_{n-1}^{(\alpha+1)}(x)\bigr)'.
\]
Then, the expected Euler characteristic formula is expressed in terms of
\[
 \psi_{n-1}^{(\alpha)}(x) = \frac{\sigma_{n-1}}{\gamma_{n-1}} w^{(\alpha)}(x) \bar L_{n}^{(\alpha)}(x)
\]
($\alpha=\alpha,\alpha+1$) and their derivatives.
By arranging the terms and taking the limit, we obtain the same result.
The complex beta matrix case is the same.
\end{proof}

\appendix

\section{Exponential asymptotic structure of Painlev\'e II}
\label{sec:painleve}

Let $\Ai(x)$ and $\Bi(x)$ be the Airy functions of the first  and second kinds, respectively.
Let $q(x)$ be the solution to Painlev\'e II:
\begin{equation}
\label{Painleve}
 0 = q''-2 q^3-x q, \quad q(x)\sim \Ai(x)\ \ (x\to\infty).
\end{equation}

The Airy functions are independent solutions of $0=f''(x)-x f(x)$.
Let
\[
 W(x,s)=\Ai(x)\Bi(s)-\Bi(x)\Ai(s)
\]
be Wronskian.
It holds the identity
\begin{equation}
\label{q-identity}
 q(x) = 2\pi\int_x^\infty W(x,s) q(s)^3 \dd s + \Ai(x).
\end{equation}
\begin{theorem}
\label{thm:exp_asympt}
Assume that the solution $q(x)$ of (\ref{Painleve}) uniquely exists.
Then, $q(x)$ has an exponential asymptotic structure
\begin{equation}
\label{q(x)}
 q(x) = q_0(x)+q_1(x)+\cdots+q_M(x)+r_M(x),
\end{equation}
where
\begin{equation}
\begin{aligned}
 q_0(x) =& \Ai(x), \\
 q_m(x) =& 2\pi\int_x^\infty W(x,s) f_m(s) \dd s, \quad f_m(x)=\sum_{\substack{i_1+i_2+i_3=m-1, \\ i_1,i_2,i_3\ge 0}} q_{i1}(x) q_{i2}(x) q_{i3}(x),
\end{aligned}
\label{recursive}
\end{equation}
and as $x\to\infty$,
\begin{equation}
\label{qm1}
 q_m(x) \sim \frac{1}{2^{4m+1}\pi^{(2m+1)/2}x^{(6m+1)/4}}e^{-\frac{4m+2}{3}x^{3/2}},
\end{equation}
\begin{equation}
 r_M(x) = o\bigl(q_{M}(x)\bigr).
\label{rM}
\end{equation}
\end{theorem}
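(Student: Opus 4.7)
The natural plan is to iterate the integral identity (\ref{q-identity}). If one formally substitutes $q = \sum_{m \geq 0} q_m$ with $q_0 = \Ai$ and expands $q^3$, grouping by total index $i_1+i_2+i_3 = m-1$ identifies the $f_m$ and gives the recursion (\ref{recursive}). The remainder $r_M := q - \sum_{m=0}^{M} q_m$ then satisfies
\[
 r_M(x) = 2\pi \int_x^\infty W(x,s)\bigl[q(s)^3 - \textstyle\sum_{m=1}^{M} f_m(s)\bigr]\,ds,
\]
whose integrand is of order at least $f_{M+1}(s)$, so one expects $r_M \sim q_{M+1} = o(q_M)$.

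I would prove (\ref{qm1}) by induction on $m$. The base case $m=0$ is the classical $\Ai(x) \sim \tfrac{1}{2\sqrt{\pi}} x^{-1/4} e^{-\frac{2}{3}x^{3/2}}$, matching $A_0 := 1/(2\sqrt{\pi})$. For the inductive step, the dominant balance in $W(x,s) f_m(s)$ comes from $s$ near $x$. Using the Airy asymptotics, near $s = x$ with $u = s - x$,
\[
 W(x,s) \sim \tfrac{1}{\pi} x^{-1/2} \sinh(x^{1/2} u),
\]
while the inductive hypothesis supplies
\[
 f_m(s) \sim C_m\, x^{-(6m-3)/4}\, e^{-\frac{4m+2}{3} x^{3/2}}\, e^{-(2m+1) x^{1/2} u}, \quad C_m = \sum_{i_1+i_2+i_3=m-1} A_{i_1} A_{i_2} A_{i_3}.
\]
Substituting into $q_m(x) = 2\pi \int_x^\infty W(x,s) f_m(s)\,ds$ together with the elementary evaluation $\int_0^\infty \sinh(x^{1/2} u)\, e^{-(2m+1)x^{1/2} u}\,du = 1/[4m(m+1) x^{1/2}]$ yields $q_m(x) \sim \bigl(C_m/[2m(m+1)]\bigr)\, x^{-(6m+1)/4}\, e^{-\frac{4m+2}{3}x^{3/2}}$.

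To close the induction, observe that $A_{i_1} A_{i_2} A_{i_3}$ depends only on $i_1+i_2+i_3 = m-1$ and equals $1/(2^{4m-1}\pi^{(2m+1)/2})$, so $C_m = \binom{m+1}{2}/(2^{4m-1}\pi^{(2m+1)/2}) = m(m+1)/(2^{4m}\pi^{(2m+1)/2})$. Dividing by $2m(m+1)$ produces exactly $A_m = 1/(2^{4m+1}\pi^{(2m+1)/2})$, as required. The remainder bound (\ref{rM}) then follows by a Gronwall- or fixed-point-type argument: the integral operator $\psi \mapsto 2\pi \int_x^\infty W(x,s)\psi(s)\,ds$ contracts on weighted spaces of functions decaying like $e^{-\frac{4M+2}{3}x^{3/2}}$, so the solution to the remainder equation is uniquely determined by the tail of the iterated series and has the claimed order.

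The principal analytical obstacle is the vanishing $W(x,x) = 0$, which precludes a naive Laplace evaluation that treats $W$ as a slowly varying factor. The $\sinh$ form of $W$ retains both the dominant $\Ai(x)\Bi(s)$ and the nominally subdominant $\Bi(x)\Ai(s)$ pieces of the Wronskian; both contribute at the same order to the integral and their interference is precisely what fixes the coefficient $1/[4m(m+1)]$. Uniform control of the remainders in the Airy asymptotics on the half line $[x,\infty)$ is needed to justify the near-diagonal reduction rigorously.
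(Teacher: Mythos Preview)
Your approach matches the paper's. For (\ref{qm1}) the paper isolates your $\sinh$ computation as a standalone lemma: for $a\ge 2$ and $b\ge 3/4$,
\[
2\pi\int_x^\infty W(x,s)\,s^{-b}e^{-as^{3/2}}\,ds \;\sim\; \frac{8}{(3a+2)(3a-2)}\,x^{-(b+1)}e^{-ax^{3/2}},
\]
which at $a=(4m+2)/3$ returns exactly your factor $1/[2m(m+1)]$; the induction and the $\binom{m+1}{2}$ counting are then identical to yours. For the remainder the paper does not invoke an abstract contraction principle but runs a finite bootstrap: from the boundary condition $q\sim\Ai$ one has $r_M=o(\Ai)$; feeding this into the remainder equation, the dominant piece of the integrand is $3q_0(s)^2 r_M(s)$, and the lemma above upgrades the bound to $r_M=o(q_1)$; each further pass gains a factor of order $x^{-3/2}e^{-\frac{4}{3}x^{3/2}}$, so after $M$ iterations the forcing term $f_{M+1}$ dominates the integrand and one reads off $r_M=O(q_{M+1})=o(q_M)$. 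This is the concrete mechanism behind your contraction remark; it uses only the a priori information $r_M=o(\Ai)$ and the same integral estimate, and avoids setting up weighted function spaces.
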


\begin{remark}
The first two terms in $(\ref{q(x)})$ are
\begin{align*}
& q_0(x) = \Ai(x) \sim \frac{1}{2\pi^{1/2}x^{1/4}}e^{-\frac{2}{3}x^{3/2}}, \\
& q_1(x) =
 2\pi \int_x^\infty W(x,s) \Ai(s)^3 \, \dd s
 \sim \frac{1}{2^5\pi^{3/2}x^{7/4}}e^{-2 x^{3/2}}
\end{align*}
as $x\to\infty$
(cf.\ \cite[eqs.\ (10.4.59) and (10.4.63)]{abramowitz-stegun:1970}).
\end{remark}

\begin{lemma}
\label{lm:order}
Suppose that $\rho(x)>0$.
For $f(x)=o(\rho(x))$ as $x\to\infty$,
\[
 \int_x^\infty W(x,s) f(s) \dd s = o\biggl(\int_x^\infty W(x,s) \rho(s) \dd s\biggr).
\]
For $f(x)\sim \rho(x)$ as $x\to\infty$,
\[
 \int_x^\infty W(x,s) f(s) \dd s \sim \int_x^\infty W(x,s) \rho(s) \dd s.
\]
\end{lemma}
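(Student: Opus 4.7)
The plan is to reduce both statements to a standard ``little-o passes through a positive-kernel integral'' argument, so the essential ingredient to establish first is that $W(x,s)>0$ for all $s>x$ (at least for $x$ large, which is the only regime the asymptotic statement concerns). To prove this sign property, I would note that as a function of $s$ the Wronskian satisfies $\partial_s^2 W(x,s) = s\,W(x,s)$ (since both $\Ai$ and $\Bi$ do), together with the initial conditions $W(x,x)=0$ and $\partial_s W(x,s)\big|_{s=x}=\Ai(x)\Bi'(x)-\Ai'(x)\Bi(x)=1/\pi>0$. For $x>0$, $W(x,\cdot)$ thus starts at zero with positive slope; once $W>0$, convexity $\partial_s^2 W = sW>0$ takes over, so $W$ remains positive and strictly increasing on $(x,\infty)$.

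With positivity in hand, the first conclusion is routine. Given $\varepsilon>0$, I would choose $X_\varepsilon$ so large that $|f(s)|\le \varepsilon\rho(s)$ for $s\ge X_\varepsilon$. For every $x\ge X_\varepsilon$ we then have
\[
\Bigl|\int_x^\infty W(x,s)f(s)\,\dd s\Bigr|
\le \int_x^\infty W(x,s)|f(s)|\,\dd s
\le \varepsilon\int_x^\infty W(x,s)\rho(s)\,\dd s,
\]
using both $W(x,s)>0$ and $\rho(s)>0$. Dividing by $\int_x^\infty W(x,s)\rho(s)\,\dd s$ and letting $x\to\infty$, the $\limsup$ of the ratio is at most $\varepsilon$; since $\varepsilon$ was arbitrary, the ratio tends to zero, which is exactly the asserted little-o bound.

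The second conclusion follows immediately from the first by writing $f(s)=\rho(s)+g(s)$ with $g(s)=o(\rho(s))$, using linearity of the integral, and applying Part 1 to the error term $g$; the ratio then becomes $1+o(1)$.

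The only potential obstacle I foresee is convergence of the improper integrals, since $\Bi(s)$ grows like $e^{2s^{3/2}/3}$. In the use made in Theorem \ref{thm:exp_asympt} the functions $\rho$ are of the form $s^{-a}e^{-c s^{3/2}}$ with $c>2/3$, so $\rho(s)\Bi(s)$ is still exponentially decaying and the integrals converge absolutely for large $x$; this mild decay restriction on $\rho$ should either be added as an explicit hypothesis or understood as a standing assumption inherited from the way the lemma is applied. No step beyond the sign argument and this integrability remark is genuinely subtle.
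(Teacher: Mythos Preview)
Your proof is correct and follows essentially the same approach as the paper: establish $W(x,s)>0$ for $s>x$ large, then sandwich the integral using $|f|\le\varepsilon\rho$. The only difference is cosmetic---the paper obtains positivity from the monotonicity of $\Ai$ and $\Bi$ rather than your ODE argument---and your remark on integrability is a legitimate caveat the paper leaves implicit.
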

\begin{proof}
For all sufficiently large $x$ and $x<s$, $\Ai(x)>\Ai(s)$, $\Bi(x)<\Bi(s)$, and hence $W(x,s)>0$.
For the former case, for $\forall\varepsilon>0$, $-\varepsilon\rho(x)\le f(x) \le \varepsilon\rho(x)$ for sufficiently large $x$, and
\[
 -\varepsilon \int_x^\infty W(x,s) \rho(s) \dd s \le \int_x^\infty W(x,s) f(s) \dd s \le \varepsilon \int_x^\infty W(x,s) \rho(s) \dd s
\]
follows.
The same for the latter case.
\end{proof}

\begin{lemma}
\label{lm:tail}
For $a\ge 2$, $b\ge 3/4$, as $x\to\infty$,
\[
 2\pi\int_x^\infty W(x,s) s^{-b} e^{-a s^{3/2}} \dd s \sim \frac{8}{(3a+2)(3a-2)} x^{-(b+1)}e^{-a x^{3/2}}.
\]
\end{lemma}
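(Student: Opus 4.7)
The plan is to substitute the large-argument asymptotics of $\Ai$ and $\Bi$ into $W(x,s)$, split the integrand into two exponential pieces, evaluate each by a Laplace-type endpoint estimate at $s=x$, and combine them; the stated prefactor will emerge from the partial cancellation of two contributions of the same exponential order.

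Concretely, I would first invoke the classical expansions $\Ai(t)\sim\tfrac{1}{2\sqrt{\pi}}t^{-1/4}e^{-\frac{2}{3}t^{3/2}}$ and $\Bi(t)\sim\tfrac{1}{\sqrt{\pi}}t^{-1/4}e^{\frac{2}{3}t^{3/2}}$ as $t\to\infty$, and substitute them into $W(x,s)=\Ai(x)\Bi(s)-\Bi(x)\Ai(s)$ to obtain, uniformly in $s\ge x\to\infty$,
\[
 2\pi W(x,s)\sim (xs)^{-1/4}\Bigl[e^{\frac{2}{3}(s^{3/2}-x^{3/2})}-e^{-\frac{2}{3}(s^{3/2}-x^{3/2})}\Bigr].
\]
Multiplying by $s^{-b}e^{-as^{3/2}}$ and integrating then reduces the target (at leading order) to the difference
\[
 x^{-1/4}\Bigl[e^{-\frac{2}{3}x^{3/2}}J_{-}(x)-e^{\frac{2}{3}x^{3/2}}J_{+}(x)\Bigr],\qquad
 J_{\pm}(x):=\int_x^\infty s^{-b-1/4}e^{-(a\pm 2/3)s^{3/2}}\dd s.
\]

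Next I would estimate each $J_{\pm}(x)$ by the substitution $\tau=(s/x)^{3/2}$, bringing it into the form $\tfrac{2}{3}x^{-b+3/4}\int_1^\infty\tau^{-(2b+3/2)/3}e^{-(a\pm 2/3)x^{3/2}\tau}\dd\tau$, and then apply the elementary tail estimate $\int_1^\infty\tau^{-c}e^{-\beta\tau}\dd\tau\sim\beta^{-1}e^{-\beta}$ as $\beta\to\infty$ (a single integration by parts) to obtain $J_{\pm}(x)\sim\tfrac{2}{3a\pm 2}x^{-b-3/4}e^{-(a\pm 2/3)x^{3/2}}$. Substituting these back yields
\[
 \left(\frac{2}{3a-2}-\frac{2}{3a+2}\right)x^{-(b+1)}e^{-ax^{3/2}}=\frac{8}{(3a+2)(3a-2)}x^{-(b+1)}e^{-ax^{3/2}},
\]
which is the asserted asymptotic.

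The hypothesis $a\ge 2$ keeps both $a\pm 2/3$ positive and bounded away from zero, so that $J_{\pm}(x)$ are finite and the denominators $3a\pm 2$ are nonvanishing, while $b\ge 3/4$ controls the polynomial factor in the endpoint estimate. The main (though routine) technical obstacle is justifying that the $O(t^{-3/2})$ corrections in the Airy expansions contribute only lower-order terms after integration: this follows by running the same Laplace-type argument on the error, which yields a relative correction of order $x^{-3/2}$ that is absorbed into the $\sim$ relation.
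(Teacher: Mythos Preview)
The paper states this lemma without proof, so there is no authors' argument to compare against. Your approach is correct and is essentially the natural one: insert the standard large-argument expansions $\Ai(t)\sim\frac{1}{2\sqrt{\pi}}t^{-1/4}e^{-\frac{2}{3}t^{3/2}}$, $\Bi(t)\sim\frac{1}{\sqrt{\pi}}t^{-1/4}e^{\frac{2}{3}t^{3/2}}$ into $W(x,s)$, split into the two endpoint integrals $J_\pm$, and evaluate each by the one-step Watson/Laplace estimate at $s=x$. Your algebra is right, and the combination $\frac{2}{3a-2}-\frac{2}{3a+2}=\frac{8}{(3a+2)(3a-2)}$ gives the asserted constant.

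Two minor remarks on the justification you sketch at the end. First, because the two pieces $e^{-\frac{2}{3}x^{3/2}}J_-$ and $e^{\frac{2}{3}x^{3/2}}J_+$ are of the \emph{same} exponential order $x^{-(b+1)}e^{-ax^{3/2}}$, you should make explicit that their leading coefficients differ, so the subtraction costs nothing: the $O(x^{-3/2})$ relative errors from the Airy expansions and from the tail estimate $\int_1^\infty\tau^{-c}e^{-\beta\tau}\dd\tau=\beta^{-1}e^{-\beta}(1+O(\beta^{-1}))$ stay $O(x^{-3/2})$ relative to the difference. Second, the hypotheses $a\ge 2$ and $b\ge 3/4$ are the ranges used in the application (Theorem~\ref{thm:exp_asympt}); for the lemma itself your argument only needs $a>2/3$ so that $a-2/3>0$ and $J_-$ converges, and the endpoint Laplace estimate is valid for any fixed real exponent $c$, so the restriction on $b$ is not essential to your proof.
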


\begin{proof}[Proof of Theorem \ref{thm:exp_asympt}]
Let $q_m$ ($0\le m\le M$) be defined by (\ref{recursive}) recursively, and let
\begin{equation}
\label{rM_def}
 r_M(x) = q(x)-\sum_{m=0}^M q_m(x).
\end{equation}
We will prove (\ref{qm1}) and (\ref{rM}).

(\ref{qm1}) is proved by induction.
The case $m=0$ holds by the assumption of the existence of the solution $q(x)$.
Suppose that (\ref{qm1}) holds for $m=0,\ldots,m-1$.
For $i_1+i_2+i_3=m-1$,
\[
 q_{i_1}(x)q_{i_2}(x)q_{i_3}(x) \sim
 \frac{1}{2^{4(m-1)+3}\pi^{(2(m-1)+3)/2}x^{(6(m-1)+3)/4}}e^{-\frac{4(m-1)+6}{3}x^{3/2}},
\]
and
\[
 \sum_{\substack{i_1+i_2+i_3=m-1, \\ i_1,i_2,i_3\ge 0}} 1 = \binom{m+1}{2},
\]
hence
\begin{equation*}
 f_m(x) \sim \binom{m+1}{2} \frac{1}{2^{4m-1}\pi^{(2m+1)/2}x^{(6m-3)/4}}e^{-\frac{4m+2}{3}x^{3/2}}
\end{equation*}
and by Lemmas \ref{lm:order} and \ref{lm:tail},
\[
 q_m(x) = 2\pi\int_x^\infty W(x,s) f_m(s) \dd s \sim
 \frac{1}{2^{4m+1}\pi^{(2m+1)/2}x^{(6m+1)/4}}e^{-\frac{4m+2}{3}x^{3/2}}.
\]

To prove (\ref{rM}), we start from the identity (\ref{q-identity}).
Substituting
\begin{align*}
 q^3
 =& \biggl(\sum_{m=0}^M q_m + r_M\biggr)^3 \\
 =& \biggl(\sum_{m=0}^M q_m\biggr)^3 + 3\biggl(\sum_{m=0}^M q_m\biggr)^2 r_M + 3\biggl(\sum_{m=0}^M q_m\biggr) r_M^2 + r_M^3 \\
 =& \sum_{m=1}^M f_m + \sum_{m=M+1}^{3M+1} f_m + 3\biggl(\sum_{m=0}^M q_m\biggr)^2 r_M + 3\biggl(\sum_{m=0}^M q_m\biggr) r_M^2 + r_M^3
\end{align*}
into (\ref{q-identity}),
and noting the definitions of $r_M$ in (\ref{rM_def}) and $q_m$ in (\ref{recursive}), we have
\begin{equation}
\label{rM1}
\begin{aligned}
 r_M(x)
 =& q(x) - \sum_{m=1}^M q_m(x) -\Ai(x) \\
 =& 2\pi\int_x^\infty W(x,s) q(s)^3 \dd s - \sum_{m=1}^M q_m(x) \\
 =& 2\pi\int_x^\infty W(x,s) \times\Biggl\{ \sum_{m=M+1}^{3M+1} f_m(s) \\
  & \qquad + 3\biggl(\sum_{m=0}^M q_m(s)\biggr)^2 r_M(s) + 3\biggl(\sum_{m=0}^M q_m(s)\biggr) r_M(s)^2 + r_M(s)^3 \Biggr\} \dd s.
\end{aligned}
\end{equation}

Here, $q(x)$ is the solution with the boundary condition $q(x)=\Ai(x)(1+o(1))$, and $q_0(x)=\Ai(x)$, and
noting the order $q_m(x)=o(\Ai(x))$ for $m\ge 1$ by (\ref{qm1}),
we have $r_M(x)=o(\Ai(x))=o\bigl(x^{-1/4}e^{-2/3 x^{3/2}}\bigr)$.
Hence, in the inside of $\{\cdot\}$ in (\ref{rM1}),
the dominant term is $q_0(s)^2 r_M(s)$ (when $M>0$), which is $o\bigl(s^{-3/4}e^{-6/3 s^{3/2}}\bigr)$.
Therefore, by Lemmas \ref{lm:order} and \ref{lm:tail}, we have $r_M(x) =o\bigl(x^{-7/4}e^{-6/3 x^{3/2}}\bigr)$.

Using the new order of $r_M(x)$, we evaluate the inside of $\{\cdot\}$ in (\ref{rM1}) again.
The dominant term (when $M>1$) is $q_0(s)^2 r_M(s)=o\bigl(s^{-9/4}e^{-10/3 s^{3/2}}\bigr)$ and by Lemmas \ref{lm:order} and \ref{lm:tail}, we have $r_M(x)=o\bigl(x^{-13/4}e^{-10/3 x^{3/2}}\bigr)$.
This procedure terminates when $f_{M+1}(s)$ dominates $q_0(s)^2 r_M(s)$.
In this last stage,
\[
 r_M(x)=2\pi\int_x^\infty W(x,s) \{ f_{M+1}(s) + \cdots \} \dd s = O(q_{M+1}(x)) = o(q_{M}(x)).
\]
\end{proof}

\bibliographystyle{amsalpha}
\bibliography{ec_infinity-bib.bib}

\end{document}